\theoremstyle{plain}
\newtheorem{theo}{Theorem}[section]
\newtheorem{lem}[theo]{Lemma}
\newtheorem{prop}[theo]{Proposition}
\newtheorem{coroll}[theo]{Corollary}
\theoremstyle{definition}
\newtheorem{defi}[theo]{Definition}
\theoremstyle{remark}
\newtheorem{rem}[equation]{Remark}
\newtheorem{exa}{Example}[section]
\newcommand{\R}{\mathbb{R}}
\newcommand{\QQ}{\mathbb{Q}}
\newcommand{\N}{\mathbb{N}}
\newcommand{\M}{\mathbb{M}}
\newcommand{\W}{\mathbb{W}}
\newcommand{\V}{\mathbb{V}}
\newcommand{\E}{\mathcal{E}}
\newcommand{\HH}{\mathbb{H}}
\newcommand{\He}{\mathbb{H}}
\newcommand{\B}{\mathcal{B}}
\newcommand{\G}{\mathbb{G}}
\newcommand{\mcal}{\mathcal}
\newcommand{\tl}{\tilde}
\newcommand{\graph}[1]{\mathrm{graph}\,(#1)}
\newcommand{\average}{{\mathchoice {\kern1ex\vcenter{\hrule height.4pt
width 6pt
depth0pt} \kern-9.7pt} {\kern1ex\vcenter{\hrule height.4pt width 4.3pt
depth0pt}
\kern-7pt} {} {} }}
\newcommand{\res}{\mathop{\hbox{\vrule height 7pt width .5pt depth 0pt
\vrule height .5pt width 6pt depth 0pt}}\nolimits}
\title{Intrinsic Lipschitz maps vs. Lagrangian type solutions in Carnot groups of step 2}
\author[Daniela Di Donato]{Daniela Di Donato}
\address{Daniela Di Donato: SISSA\\ Mathematics Area\\ Via Bonomea, 265\\ 34136, Trieste - Italy\\}  
\email{ddidonat@sissa.it}
    \thanks{D.D.D. is supported by SISSA, Italy.}
\subjclass[]{ 
	53C17, 
	22E25, 
	49N60, 
	26A16,  
	58J60, 
	35F20, 
	35F50. 
	}
\keywords{free Carnot groups, complexified Heisenberg group, Carnot groups of step 2, intrinsic Lipschitz maps, Lagrangian solutions, Burgers' type operator.}
\date{\today}
\begin{document}

\maketitle

\begin{abstract}
We focus our attention on the notion of intrinsic Lipschitz graphs, inside a subclass of Carnot groups of step 2 which includes corank 1 Carnot groups (and so the Heisenberg groups), Free groups of step 2 and the complexified Heisenberg group. More precisely, we prove the equivalence between an intrinsic Lipschitz map and a suitable notion of weak solution of a Burgers' type PDE, which generalizes the Lagrangian solution in the context of Heisenberg groups.
\end{abstract}

\maketitle

\section{Introduction}

SubRiemannian geometry is a generalization of Riemannian one but they are significantly different from each other.  A SubRiemannian manifold is  defined as a manifold $M$ of dimension $n$ joint with a distribution $\Delta$ of $m$-planes (with $m < n$) which satisfies the known H\"ormander condition and a Riemannian metric on $\Delta$. One can define a distance between two points of $M$ as the infimum of the lengths of absolutely continuous paths that are tangent to $\Delta$ and link these two points. Here the length of a path is defined via usual way using the fact that the metric considered is Riemannian. 

In general, SubRiemannian distances are not Euclidean at any scale, and hence not Riemannian.  
Consequently, in the context of SubRiemannian the proofs  often require new techniques. 


We focus our attention on particular SubRiemannian groups called Carnot groups. 
Le Donne establishes  in \cite{biblioLeDonne} that Carnot groups $\G$ are the only metric spaces that are: 
\begin{description}
\item[ a)] locally compact;
\item[ b)]  geodesic;
\item[ c)]  isometrically homogeneous;
\item[ d)] self-similar.
\end{description}
Here 
 $c)$ means for any couple of two points there exists a distance-preserving homeomorphism on $\G$; $d)$ i.e., there exists $\lambda >1$ and a homeomorphism $f$ on $\G$ such that the distance $d$ on $\G$ satisfies the following equality $d(f(p),f(q)) = \lambda d(p,q)$, for all $p,q \in \G$.

It is useful to know that the  Lie algebra $\mathfrak g$ associated to a Carnot group $\G$ is such that
\[
\mathfrak g=V_1\oplus\dots\oplus V_\kappa,\quad [V_j,V_1]=V_{j+1},\quad \text{for } j=1,\dots, \kappa-1,\quad [V_\kappa,V_1]=\{0\},
\] 
where $V_1,\dots, V_\kappa$ are complementary linear subspaces and $[V_j,V_1]$ denotes the subspace of ${\mathfrak{g}}$ generated by
the commutators $[X,Y]$ with $X\in V_j$ and $Y\in V_1$. The integer $\kappa$ is called step of the group $\G$, while $\dim (V_1)$ is called rank of $\G$. 

Euclidean spaces are commutative Carnot groups of step $1$ and are the only commutative ones. This paper is dedicated to a suitable subclass of Carnot groups of step 2, (i.e., $\kappa=2$) which includes  the Heisenberg groups $\He ^n$, corank 1 Carnot groups  \cite{biblio3}, Free groups of step 2 \cite{biblio3} and the complexified Heisenberg group \cite{biblioRR}.

The core of this paper is to study the notion of  intrinsic Lipschitz graphs. This concept is important to develop a satisfactory theory of intrinsic rectifiable sets which is an active line of research \cite{biblioAM3, biblioAKLD, biblioDLDMV, biblioOMM}. 

  
   Rectifiability, introduced by Besicovich in the plane, is a key notion in Geometric Measure Theory. The classical definition was given by Federer in \cite{biblio5}: in Euclidean spaces, rectifiable sets are defined as being essentially contained in the countable union of $C^1$ submanifolds or of Lipschitz graphs. The equivalence of these two notions follows from well-known theorems: Rademacher Theorem; Extension of Lipschitz maps; Whitney's Extension Theorem.   

Regarding Carnot groups, different notions of rectifiability have been proposed in the literature:
\begin{enumerate}
\item Rectifiability using images of Lipschitz maps defined on subsets of $\R^d$;
\item  Lipschitz image  rectifiability, using homogeneous subgroups;
\item  Intrinsic Lipschitz graphs  rectifiability;
\item  Rectifiability using intrinsic $C^1$ surfaces.
\end{enumerate}

The first approach (1) is a general metric space approach, given by Federer in \cite{biblio5}. He states that a $d$-dimensional rectifiable set in a Carnot group $\G$ is essentially covered by the images of Lipschitz maps from $\R^d$ to a Carnot group $\G$. Unfortunately, this definition is too restrictive because often there are only rectifiable sets of measure zero (see \cite{biblioAMBkirc, biblioMAGN1}).

Another metric space approach but more fruitful than $(1)$ in the setting of groups is given by Pauls \cite{biblioPAUl} (see (2)). It is called Lipschitz image (LI) rectifiability. The author  considers images in $\G$ of Lipschitz maps defined not on $\R^d$ but on subset of homogeneous subgroups of $\G.$

 Intrinsic Lipschitz graphs (iLG) rectifiability $(3)$ and the notion of intrinsic $C^1$ surfaces $(4)$ were both introduced by Franchi, Serapioni, Serra Cassano \cite{biblio6, biblio8}.  
 The concept $(3)$ is studied with different degrees of generality in \cite{biblioAM2, biblio2, biblio21, biblio22, biblio26, bibliofsscINTRINSICLIP, biblioDDF, biblioVIT2}. The simple idea of intrinsic graph is the following one: let $\mathbb{V}$ and $\W$ be complementary homogeneous subgroups of $\G$, i.e., $\W \cap \mathbb{V}= \{ 0 \}$ and $\G=\W\cdot \mathbb{V}$, then the intrinsic left graph of $\phi :\W\to \mathbb{V}$ is the set $$ \mbox{graph} {(\phi)}:=\{ x\cdot \phi (x) \, |\, x\in \W \}.$$
 A function $\phi $ is said to be intrinsic Lipschitz if it is possible to put, at each point $p\in \mbox{graph} {(\phi)}$, an intrinsic cone with vertex $p$, axis $\mathbb{V}$ and fixed opening, intersecting $\mbox{graph} {(\phi)}$ only at $p$. 


Moreover, in \cite{biblio6, biblio7}, Franchi, Serapioni, Serra Cassano introduce the notion $(4)$ adapting to groups De Giorgi's classical technique valid in Euclidean spaces to show that the boundary of a finite perimeter set can be seen as a countable union of $C^1$ regular surfaces. A set $S$ is a $d$-codimensional intrinsic $C^1$ surface $(4)$  if there exists a continuous function $f:\G \to \R^d$ such that,  locally, $$S=\{ p\in \G : f(p)=0\},$$ and the horizontal jacobian of $f$ has maximum rank, locally.
  
The approaches $(2)$ and $(3)$ are natural counterparts of the notions of rectifiability in Euclidean spaces, where their equivalence is trivial. Hence it is surprising that the connection between iLG and LI rectifiability is poorly understood already in Carnot groups of step 2. 

In \cite{biblioALD}, Antonelli and Le Donne prove that these two definitions are different in general; their example is for a Carnot group of step $3$.  
 The paper \cite{biblioDDFO} makes progress towards the implication iLGs are LI rectifiable in $\He^n$. We proved that $C^{1,\alpha } $-surfaces are LI rectifiable, where $C^{1,\alpha } $-surfaces are intrinsic $C^1$ ones  whose horizontal normal is $\alpha$-H\"older continuous. 

 Differently from Euclidean case, in Carnot groups the notions of rectifiability given in $(3)$ and $(4)$ are in general not equivalent any more.  
 The problem is that we don't have a suitable Whitney's Extension Theorem for maps from a closed subset of $\R^k$ to $\G$; vector valued extension theorems; vector valued Rademacher's type theorems that, in the intrinsic context, is equivalent to saying that intrinsic Lipschitz maps are a.e. differentiable in a suitable sense. Recently, in \cite{biblioVIT1}, Vittone gives a positive answer about Rademacher's type theorems in $\He^n$. Another positive answer is given by Franchi, Marchi, Serapioni \cite{biblio21} for a large class of Carnot groups which includes step 2 Carnot groups in codimension one (i.e., for a map $\phi:\V \to \W$ with $\V$ 1-dimensional). We also recall \cite{biblioJNGV}, where the authors give counterexamples to a Rademacher theorem in codimension 2 (when $\G \ne \HH^n$). 



In the context of $\He^n$, a characterization of intrinsic $C^1$ surfaces has been studied in \cite{biblio1}.  
Namely, let a continuous map $\phi : \W \to \V$ be defined between two complementary subgroups of $\He^n$ where $\V$ is 1-dimensional, then the following conditions are equivalent: 
\begin{description}
\item[ a)] graph$(\phi)$ is, locally, an intrinsic $C^1$ surface;
\item[ b)] $\phi$ is a suitable weak solution of PDE system 
\begin{equation}\label{fottUTA}
D^\phi \phi =w, 
\end{equation}
where $w$ is a continuous map.
\end{description}

In $\He^1$, for $ \mathbb W=\{(0,x_2,x_3)\,:\, x_2,x_3 \in \R \}, \mathbb V=\{(x_1,0,0) \,:\, x_1\in \R\} \subset \He^ 1$ and  $ \phi(0,x_2,x_3):= \left(\phi_1(x_2,x_3),0,0 \right): \mathbb W \to \mathbb V$, we have that
 \begin{equation*}
D^\phi  :=\frac {\partial } {\partial x_2} + \phi_1\frac {\partial}{\partial x_3},
\end{equation*}
i.e., $D^\phi$ is Burgers' operator which is a non linear first order one studied in various areas of applied mathematics, such as fluid mechanics, nonlinear acoustics, gas dynamics and traffic flow. 

In 2015, in \cite{biblioKOZHEVNIKOV}, there is a general definition of this operator introduced in \cite{biblio1} which is the correct intrinsic replacement of Euclidean gradient for $C^1$ surfaces. Precisely for this reason, it is called intrinsic gradient of $\phi$. Specifically, in a Carnot group of any step, $D_j^\phi \phi$ is the projection  on $\W$ of a horizontal vector field of $\G$ on the points of the intrinsic graph of $\phi$, where $j$ is an integer smaller than the rank of $\G$.

Starting from \cite{biblio1}, the study of PDE in the context of intrinsic $C^1$ surfaces and then of intrinsic Lipschitz maps has been largely developed in $\He^n$ \cite{biblioCorni, biblio27} and in Carnot groups of step $2$ \cite{biblioADDD, biblioDDD, biblioDDD2}. Recently, in \cite{biblioADDDLD}, we generalize these results to any Carnot groups and in low codimensional case, i.e. when $\V$ has dimension not larger than the rank of $\G$.  Here the datum $w$ in  \eqref{fottUTA} is a continuous map.

In \cite{biblio17}, the authors study the case when $w$ is a \emph{bounded measurable function} in the context of Heisenberg groups $\He^n.$ A natural question is if it is possible to generalize the notions of weak solution of  \eqref{fottUTA} given in \cite{biblio17} and, finally, to prove their equivalence with iLG in more general cases, i.e., Carnot groups of step $2$ with $\G=\W \cdot \V$ and $\V$ is 1-dimensional.  In \cite{biblioDDD2}, we give a partially positive answer about the equivalence between iLG  and a $1/2$-H\"older map $\phi : \W \to \V$ satisfies \eqref{fottUTA} in the distributional sense. Here the datum $w$ is just a measurable map. 

In this paper, we go another step further towards the understanding of this question. 

The main result (see Theorem \ref{teoremaFinale1}) states that in a suitable subclass  of Carnot groups of step 2, we have that
\begin{equation*}
\begin{aligned}
\phi \mbox{ is intrinsic Lipschitz} & \quad  \Longleftrightarrow  \quad \phi \mbox{ is a Lagrangian type  solution of } D^\phi \phi = w, \\
\end{aligned}
\end{equation*} where $w$ is a fixed measurable map and Lagrangian type  solution is defined in Definition \ref{defiLagrangiana}. 

Firstly, in the context of Carnot groups of step 2, we introduce a suitable weak solution of \eqref{fottUTA}. We will call it Lagrangian type  solution because it generalizes Lagrangian solution in the context of Heisenberg groups. The idea of this definition is that the reduction on characteristics is not required on any characteristic, as happened in the broad* solution (see Definition 3.24 in \cite{biblioADDDLD}), but on a suitable set of characteristics. Finally, we present the main result of this paper, i.e., Theorem \ref{teoremaFinale1} in a suitable subclass of step 2 Carnot groups (see Setting \ref{Setting}). Here we show the link between locally iLGs and Lagrangian solutions of $\eqref{fottUTA}$ establishing their  equivalence with the distributional solutions of $\eqref{fottUTA}.$ We refine the technique used in \cite{biblio17} in the context of $\HH^n$  noting that the main difference between 2 step Carnot groups and Heisenberg groups is that in $\HH^n$ there is only one vertical (i.e. non-horizontal) coordinate, whereas for 2 step Carnot groups there can be many.

We underline that in Section \ref{Existence Lagrangian type parameterization}, the results are true in any Carnot groups of step 2. Here the strategy is to solve the problem for corank 1 Carnot groups following \cite{biblio17} and then for the general case we reduce to this one observing that the vertical components of the integral curve of $D^\phi$ can be written in combination with each other (see \eqref{infunzgamma}).
\\

The paper is organized as follows. In Section 2 we introduce the basic notions on Carnot groups of step 2, $C^1_\G$ functions, $\G$-regular surfaces and intrinsic Lipschitz graphs. The definition, some properties and examples of the intrinsic gradient $D^\phi$ of a continuous map $\phi$ is the object of Section 3. In Section 4 we introduce and give some properties of so-called Lagrangian type solution. Finally, Theorems \ref{teoremaFinale1} is proved in Section 5 together with some preliminary results.

$\mathbf{Acknowledgements.}$ We wish to express our gratitude to Raul Serapioni for many invaluable discussions about the notion of intrinsic Lipschitz maps. We also thank Gioacchino Antonelli for the useful discussions on the topic.

\section{Notations and preliminary results}

\subsection{Carnot groups of step 2}\label{Carnotinizio}

We here introduce Carnot groups of step 2 and we refer the reader to \cite[Chapter~3]{biblio3}.   
We denote with $m$ the rank of $\G$ and we identify $\mathbb G$ with $(\R^{m+n}, \cdot )$. If $q\in \G$, we write $q=(x,y)$ meaning that $x\in \R^m$ and $y\in \R^n$. The group operation $\cdot$ between two elements $q=(x,y)$ and $q'=(x',y')$ is given by
\begin{equation}\label{5.1.0}
q\cdot q'= \left(x+ x',y+ y'-\frac 1 2\langle \mathcal{B} x,  x' \rangle \right),
\end{equation} 
 where $\langle \mathcal{B}x,x' \rangle := (\langle \mathcal{B}^{(1)}x,x' \rangle, \dots , \langle \mathcal{B}^{(n)} x, x' \rangle)$ and $\mathcal B^{(i)}$ are linearly independent and skew-symmetric matrices in $\R^{m\times m}$, for $i=1,\dots, n$. Moreover the dilation $\delta _{\lambda } : \R^{m+n} \to \R^{m+n}$ defined as 
\begin{equation*}\label{dilatazioneintro2}
\delta_\lambda (x,y)  := (\lambda x , \lambda^2 y), \quad \mbox{ for all } (x,y)\in \R ^{m+n},
\end{equation*}
is an automorphism of $(\R^{m+n}, \cdot )$, for all $\lambda >0$.

The identity of $\G$ is the origin of $\R^{m+n}$ and $(x,y)^{-1}=(-x,-y)$. For any $p\in \G$ the intrinsic left translation $\tau _p:\G \to \G $ are  defined as
\begin{equation*}
q \mapsto \tau _p q := p\cdot q=pq.
\end{equation*} 

A \emph{homogeneous norm} on $\G$ is a nonnegative function $p\mapsto \|p\|$ such that for all $p,q\in \G$ and for all $\lambda \geq 0$
\begin{equation*}\label{defihomogeneous norm}
\begin{split}
\|p\|=0\quad &\text{if and only if }  p=0\\
\|\delta _\lambda p\|= \lambda \|p\|,& \qquad 
\|p \cdot q\|\leq \|p\|+ \|q\|.
\end{split}
\end{equation*}
We make the following choice of the homogeneous norm in $\G$:
\begin{equation}\label{normadinfty}
\Vert(x,y)\Vert:= \max \{\vert x\vert _{\R^{m}}, \varepsilon \vert y\vert^{1/2}_{\R^{n}} \},
\end{equation}
for a suitable $\varepsilon \in (0,1]$ (for the existence of such an $\varepsilon >0$ see Theorem 5.1 in \cite{biblio8}). From now on, with a bit abuse of notation, we will write the norm of $\R^s$ for every $s\in \N$ with the same symbol $|\cdot|.$


However, given any homogeneous norm $\|\cdot \|$, it is possible to introduce a distance in $\G$ given by
\[
d(p,q)=d(p^{-1} q,0)= \|p^{-1} q\|, \qquad \text{for all $p,q\in G$}.   
\]


The metric $d$ is well behaved with respect to left translations and dilations, i.e. for all $p,q,q' \in \G$ and $\lambda >0$,
\begin{equation*}
\begin{aligned}
d (p\cdot q,p\cdot q')=d(q,q'), \qquad d (\delta_\lambda q,\delta_\lambda q')=\lambda d(q,q'),
\end{aligned}
\end{equation*}
Moreover, for any bounded subset $\Omega \subset \G$ there exist positive constants $c_1=c_1(\Omega),c_2=c_2(\Omega)$ such that for all $p,q \in \Omega$
\[
c_1|p-q| \leq d(p,q) \leq c_2 |p-q|^{1/2 }
\] 
and, in particular, the topology induced on $\G$ by $d$ is the Euclidean topology. For $p\in \G$ and $r > 0$,  $\mathcal U (p,r)$ will be the open ball associated with the distance $d$. 

The Hausdorff dimension of $(\G, d )$ as a metric space is  denoted \textit{homogeneous dimension} of $\G$ and it can be proved to be   the integer $\sum_{l=1,2}l$ dim$V_l  = m+2n > m+n$ (see \cite{biblioMitchell}).

For any $i=1,\dots, n$ and any $j,\ell=1,\dots,m$, denote by $(\mathcal{B}^{(i)})_{j\ell} = (b_{j\ell}^{(i)})$, and define $m+n$ linearly independent left-invariant vector fields by setting
\begin{equation}\label{vectorfields}
\begin{aligned}
X_j (p) & := \partial _{x_j}  -\frac{1}{2 } \sum_ {i=1 }^{n} \sum_ {\ell=1 }^{m} b_ {j\ell}^{(i)} x_\ell  \,\partial _{y_i},  \quad \mbox{ for } j=1,\dots ,m,\\
Y_i(p)  & := \partial _{y_i }, \,\qquad \qquad \qquad \qquad \qquad \mbox{ for } i=1,\dots , n.
\end{aligned}
\end{equation}  
 The ordered set $(X_1,\dots,X_m,Y_1,\dots,Y_n)$ is an adapted basis of the Lie algebra $\mathfrak g$ of $\G$. Using the skew-symmetry of $\mathcal B$, it easy to see that
\begin{equation}\label{commutatoripasso2}
[X_j, X_\ell]= \sum _{i=1}^n b_{j\ell}^{(i)} Y_i,  \quad \mbox{and} \quad  [X_j , Y_{i}] = 0,\quad \forall j,\ell=1,\dots, m \;\text{ and }\; \forall i=1,\dots,n.
\end{equation}

\begin{rem}
Note that the above arguments show that there exist $2$ step Carnot groups of any di\-men\-sion $m\in \N$ of the first layer and any dimension $$n\leq \frac{m(m-1)}{2}, $$ of the second layer: it suffices to choose $n$ linearly independent matrices $\B^{(1)} , \dots , \B^{(n)}$  in the vector space of the skew-symmetric $m\times m$ matrices (which has dimension $m(m-1)/2$) and then define the composition law as in \eqref{5.1.0}. 
\end{rem}

\begin{rem}\label{remChange} 
  If we denote $\mathcal{M}_1$ a non singular $m\times m$ matrix and $\mathcal{M}_2 $ a non singular $n\times n$ matrix, the linear change of coordinates associated to $\mathcal M _1$ and $\mathcal M _2$ is 
\begin{equation*}
(x,y)\mapsto (\mathcal M _1 x, \mathcal M _2 y).
\end{equation*}
The new composition law $\star$ in $\R^{m+n}$,   obtained by writing $\cdot $ in the new coordinates, is
\begin{equation*}
(\mathcal M _1 x, \mathcal M _2 y) \star (\mathcal M _1 x', \mathcal M _2 y') := (\mathcal M_1 x +\mathcal M_1 x', y + y' + \frac{1}{2} \langle  \mathcal{ \tilde B}  x , x' \rangle),
\end{equation*}
where $\mathcal{\tilde B} := (\mathcal{\tilde B}^{(1)}, \dots , \mathcal{\tilde B}^{(n)})$ 
and if we put $\mathcal{M}_2=(c_{sk})_{s,k=1}^n$ then $$ \mathcal{ \tilde B} ^{(s)}= (\mathcal{M}_1^{-1})^T \left( \sum_{k=1}^n c_{sk} \mathcal{B}^{(k)} \right) \mathcal{M}_1^{-1} ,$$ for $s=1,\dots ,n.$
It is easy to check that the matrices $\mathcal{ \tilde B}^{(1)} ,\dots ,\mathcal{ \tilde B}^{(n)}$ are skew-symmetric and that $(\R^{m+n}, \star, \delta_\lambda)$ is a Carnot groups of step 2 isomorphic to $\G = (\R^{m+n}, \cdot, \delta_\lambda)$ (see Section 3.4 in \cite{biblio3}). 
\end{rem}

 \begin{exa}\label{exaHeisenberg}
The simplest example of Carnot group of step 2 is provided by Heisenberg group $\HH ^k =\R^{2k+1}$. Exhaustive introductions to Heisenberg groups can be found in \cite{biblio3, biblioLIBROSC}. The group operation is of the form $\eqref{5.1.0}$ with  
 \[
\mathcal{B}^{(1)}=  \begin{pmatrix}
0 &  \mathcal{I}_k\\
-\mathcal{I}_k & 0
\end{pmatrix},
\]
where $\mathcal I_k$ is the $k\times k$ identity matrix and the family of (non isotropic) dilations is defined as
\begin{equation*}
\delta _\lambda (x,y) =(\lambda x, \lambda ^2 y), \quad \mbox{for all } (x,y)\in \R^{2k+1}, \lambda >0.
\end{equation*}
A basis of left invariant vector fields is given by
\begin{equation*}
\begin{aligned}
X_j& = \partial _{x_j} - \frac{1}{2} x_{k+j} \, \partial _{y} , \qquad \mbox{ for all } j=1,\dots, k\\
X_{k+j}& = \partial _{x_{k+j} } + \frac{1}{2} x_{j} \, \partial _{y} , \qquad \, \mbox{ for all } j=1,\dots, k\\
Y &=\partial _{y} .
\end{aligned}
\end{equation*}
The only non trivial commutator relations being $ [X_j, X_{k+j}] =Y,\, j=1,\dots , k.$ Moreover the stratification of the Lie algebra $\mathfrak h$ of the left invariant vector fields is given by $\mathfrak h=\mathfrak h _1\, \oplus \, \mathfrak h_2$,
\begin{equation*}
\mathfrak h _1=\mbox{span} \{X_1,\dots ,X_{2k}  \}, \quad \mathfrak h _2=\mbox{span}  \{Y \}.
\end{equation*}
 \end{exa}

 \begin{exa}\label{exaCorank1}
A corank 1 Carnot group is a Carnot group of step 2 where the dimension of vertical layer is 1 (i.e., $n=1$). The group operation is of the form $\eqref{5.1.0}$ 
where $(\mathcal{B}^{(1)})_{j\ell}  =(b_ {j\ell} )$ is a $m\times m$ skew symmetric matrix. Observe that
\begin{equation*}
\begin{aligned}
X_j  & = \partial _{x_j}  -\frac{1}{2 } \sum_ {\ell=1 }^{m} b_ {j\ell} x_\ell  \,\partial _{y},  \quad \mbox{ for } j=1,\dots ,m,\\
Y &=\partial _{y} ,
\end{aligned}
\end{equation*}  
and
\begin{equation*}
[X_j, X_\ell]= b_{j\ell} Y,   \quad \mbox{ for } j, \ell =1,\dots ,m.
\end{equation*}
Obviously, $\HH^k$ is a corank 1 Carnot group.
 \end{exa}
 
 \begin{exa}\label{exaFree Step 2 Groups}

Free Step 2 Groups are examples of Carnot groups of step 2  (see \cite[Section 3.3]{biblio3}).


Fix an integer $m\geq 2$ and denote by $h= m+\frac{m(m-1)}{2}$. In $\R^h$ denote the coordinates by $x_j$, for $1\leq j\leq m$, and by $y_{\ell s}$, for
	$1 \leq s< \ell \leq m$. Let $\partial _j$ and $\partial _{\ell s}$ denote the standard basis vectors in this coordinate system.
	We define $h$ linearly independent vector fields on $\R^h$ by setting:
	\begin{equation}\label{eqn:VectorFieldsFree}
	\begin{aligned}
	X_j &= \partial _j+ \frac 12 \sum_{ j<\ell\leq m } x_\ell \partial _{\ell j} -\frac 12 \sum_{ 1\leq \ell<j} x_\ell \partial _{j\ell}, \quad \mbox{ if } 1\leq j\leq m,\\
	Y_{\ell s}&= \partial _{\ell s}, \hphantom{\frac 12 \sum_{ j<\ell\leq m } x_\ell \partial _{\ell j} -\frac 12 \sum_{ 1\leq \ell<j} x_\ell \partial _{j\ell}} \qquad  \mbox{ if } 1\leq s<\ell\leq m.
	\end{aligned}
	\end{equation}
	Let  $\mathbb{F}= (\R^{m+\frac{m(m-1)}{2}}, \cdot)$ be the coordinate representation of the step 2 Carnot group with $m$ generators whose Lie algebra is generated by the vector fields in \eqref{eqn:VectorFieldsFree}. Then $\mathbb F$ is free and its Carnot structure is given by
	\begin{equation*}
	V_1:= \mbox{span} \{ X_j \,:\, 1\leq j\leq m\}\quad \mbox{and} \quad V_2:= \mbox{span} \{ Y_{\ell s} \,:\, 1\leq s<\ell \leq m\}.
	\end{equation*}
	Moreover, the composition law $\eqref{5.1.0}$ also tells us that $\mathcal{B}^{(\ell,s)}$ has entry $1$ in  position $(\ell,s)$, $-1$ in  position $(s,\ell)$ and $0$ elsewhere. That means 
		\begin{equation*}
	\begin{aligned}
	(p\cdot q)_j &=p_j+q_j,\quad \quad \qquad \qquad \qquad \,\,\,\,\, \text{ if } 1\leq j\leq m,\\
	(p\cdot q)_{\ell s} &=p_{\ell s}+q_{\ell s} +\frac 1 2 (p_\ell q_s-q_\ell p_s), \quad \text{ if } 1\leq s<\ell\leq m.\\
	\end{aligned}
	\end{equation*}

\noindent It is easily verified that for $1\leq s<\ell\leq m$ and $1\leq j\leq m$, one has
\begin{equation}\label{commutatorifree}
[X_\ell , X_s] = Y_{\ell s} \quad \mbox{and} \quad  [X_j , Y_{\ell s}] = 0.
\end{equation}



 \end{exa}

 \begin{exa}\label{exacomplexified Heisenberg group}
The  \emph{complexified Heisenberg group} $\HH^1_2$ \cite{biblioRR} (see also Section 12 in \cite{biblioMAGNANI}) is the Carnot group of topological dimension $6$  whose Lie algebra is decomposed into
	\[
	\mathfrak h ^1_2= \mathrm{span}\{X_1,X_2,X_3,X_4\}\oplus\mathrm{span}\{Y_1,Y_2\},
	\]
	where the only non-vanishing bracket relations are given by $[X_1,X_2]=[X_3,X_4]=Y_1, [X_1,X_4]=[X_2,X_3]=Y_2$.  
%
%
The explicit group operation on $\HH^1_2$ is given by \eqref{5.1.0} with
$$  \B^{(1)} =
\begin{pmatrix}
 0 &   -1 & 0 & 0\\
1&0  & 0 & 0\\
0  & 0 & 0 &1\\
0&0  & -1 & 0\\
\end{pmatrix}
, \qquad \quad  \B^{(2)}  = 
\begin{pmatrix}
 0 &   0 & 0 & -1\\
0&0  & 1 & 0\\
0  & -1 & 0 &0\\
1&0  & 0 & 0\\
\end{pmatrix},
$$
and so for $(x,y),(x',y') \in \R^4\times \R^2$
\begin{equation*}
(x,y) \cdot  (x',y') = \begin{pmatrix}
x+x'\\
y_1+y'_1+\frac{1}{2}\big(-x_2x'_1+x_1x'_2+x_4x'_3 -x_3x'_4\big)\\
y_2+y'_2+\frac{1}{2}\big(-x_4x'_1+x_3x'_2-x_2x'_3+x_1x'_4\big)
\end{pmatrix}.
\end{equation*}
Moreover, a  basis of the Lie algebra $\mathfrak h ^1_2$ of  $\HH^1_2$  is
\begin{equation*}
\begin{aligned}
X_1 &= \partial _{x_1}- \frac{1}{2} (x_2\partial _{y_1}+x_4\partial _{y_2}),\quad  X_2 = \partial _{x_2}+ \frac{1}{2} (x_1\partial _{y_1}+x_3\partial _{y_2}) , \\
 X_3 &= \partial _{x_3}+ \frac{1}{2} (x_4\partial _{y_1}-x_2\partial _{y_2}) ,\quad  X_4 = \partial _{x_4}- \frac{1}{2} (x_3\partial _{y_1}-x_1\partial _{y_2}) , \\
Y_{1} &=\partial _{y_1} , \qquad Y_{2} =\partial _{y_2} .
\end{aligned}
\end{equation*}

 \end{exa}

\subsection{$C^1_\G$ functions, $\G$-regular surfaces, Caccioppoli sets} (See \cite{biblioLIBROSC}). 
In \cite {biblio11}, Pansu introduced an appropriate notion of differentiability for functions acting between Carnot groups. We recall this definition in the particular instance that is relevant  here. 

Let $\mathcal U$ be an open subset of a step 2 Carnot group $\G$.  A function $f:\mathcal U\to \R^k$ is Pansu differentiable or more simply P-differentiable in $a \in \mathcal U$ if there is a homogeneous homomorphism 
\[
d_\mathbf Pf(a): \G\to \R^k,
\]
the Pansu differential of $f$ in $a$, such that, for $b\in \mathcal U$, 
\[
\lim_{r\to 0^+}\sup_{0<\Vert a^{-1}b\Vert<r}\frac{|f(b)-f(a)- d_\mathbf Pf(a)(a^{-1}b)|_{\R^k}}{\Vert a^{-1}b\Vert}= 0.
\]
Saying that $d_\mathbf Pf(a)$ is a  homogeneous homomorphism we mean that $d_\mathbf Pf(a): \G\to \R^k$ is a group homomorphism and also that $d_\mathbf Pf(a)(\delta_\lambda b)=\lambda d_\mathbf Pf(a)(b)$ for all $b\in \G$  and $\lambda \geq 0$.

Observe that, later on in Definition \ref{d3.2.1}, we give a different notion of differentiability for functions acting between subgroups of a Carnot group of step 2 and we reserve the notation $df$ or $df(a)$ for that differential. 

We denote  $C^1_\G (\mathcal U ,\R^k )$  the set of functions $f:\mathcal U\to \R^k$ that are P-differentiable in each $a\in \mathcal U$ and such that $d_\mathbf Pf(a)$ depends continuously on $a$. 

It can be proved that $f=(f_1,\dots , f_k)\in C^1_\G (\mathcal U ,\R^k )$ if and only if the distributional horizontal derivatives  
$X_lf_j $,  for $l=1\dots, m$, $j=1,\dots, k$,
are continuous in $\mathcal U $.  Remember that $C^1(\mcal U ) \subset C^1_\G (\mcal U)$ with strict inclusion whenever $\G$ is not abelian (see Remark 6 in \cite{biblio6}).

The \emph{horizontal Jacobian} (or the \emph{horizontal gradient} if $k=1$) of $f:\mcal U \to \R^k$ in $a\in \mathcal U$ is the matrix
\[
 \nabla_\G f(a):=\left[X_lf_j(a)\right] _{l=1\dots m, j=1\dots k}
\]
when the partial derivatives $X_if_j$ exist. Hence $f=(f_1,\dots , f_k)\in C^1_\G (\mcal U ,\R^k )$ if and only if its horizontal Jacobian exists and is continuous in $\mathcal U$. 
 The \emph{horizontal divergence} of $\phi:=(\phi _1,\dots , \phi _{m}):\mcal U\to \R^{m}$  is defined as 
\begin{equation*}
 \mbox{div}_\G \phi := \sum_{j=1}^{m} X_j\phi _j,
\end{equation*}
if $X_j\phi _j$ exist for  $j=1,\dots ,m$.

Now we use the notion of P-differentiability do introduce introduce the $\G$-regular surfaces. Regarding the bibliography, in addition to the one already mentioned, the reader can read \cite{biblioMAGNANI}.
\begin{defi}\label{Gregularsurfaces}
$S\subset \G$ is a \emph{$k$-codimensional $\G$-regular surface} if for every $p\in S$ there are a neighbourhood $\mcal U$ of $p$ and a function $f=(f_1,\dots , f_k)\in C^1_\G(\mcal U,\R^k)$ such that
\[
S\cap \mcal U=\{ q\in \mcal U : f(q)=0 \}
\]
and $d_\mathbf Pf(q)$ is surjective, or equivalently if the $(k\times m)$ matrix $\nabla_\G f(q)$ has rank $k$, for all $q\in \mcal U$.
\end{defi}

The class of $\G$-regular surfaces is different from the class of Euclidean regular surfaces. In \cite{biblio19}, the authors give an example of $\HH^1$-regular surfaces, in $\mathbb{H}^1$ identified with $\R^3$, that are (Euclidean) fractal sets. Conversely, there are continuously differentiable 2-submanifolds in $\R^3$ that are not $\HH^1$-regular surfaces (see \cite{biblio6} Remark 6.2 and  \cite{biblio1} Corollary 5.11).
\\

In the setting of step 2 Carnot groups, there is a natural definition of bounded variation functions and of finite
perimeter sets (see \cite{biblioGARN} or \cite{biblioLIBROSC} and the bibliography therein).

We say that $f:\mcal U \to \R $ is of bounded $\G$-variation in an open set $\mcal U \subset \G$ and we write $f\in BV_\G(\mcal U )$, if $f\in \mathcal L^1(\mcal U )$ and
\[
\| \nabla _\G f \| (\mcal U ):= \sup \Bigl\{ \int _{\mcal U} f \, \mbox{div}_\G\phi \, d\mathcal{L}^{m+n} : \phi \in C^1_c (\mcal U , H\G), |\phi (p)| \leq 1 \Bigl\} <+\infty .
\] 
The space $BV_{\G , loc }(\mcal U )$ is defined in the usual way.

In the setting of step 2 Carnot groups, the structure theorem for $BV_\G$ functions reads as follows.
\begin{theo}\label{structure theorem BV}
If $f\in BV_{\G , loc }(\Omega )$ then $\| \nabla _\G f \|$ is a Radon measure on $\Omega$. Moreover, there is a $\| \nabla _\G f \|$ measurable horizontal section   $\sigma _f : \Omega \to H\G$ such that $|\sigma _f (P)|=1$ for $\| \nabla _\G f \|$-a.e. $P\in \Omega $ and
\begin{equation*}
\int_{\Omega } f \mbox{div}_\G \xi \, d \mathcal{L}^{m+n}  =   \int_{ \Omega } \langle \xi ,\sigma _f\rangle\, d\| \nabla _\G f \|,
\end{equation*}
for every $\xi \in C^1_c(\Omega , H\G)$. Finally the notion of gradient $\nabla _\G$ can be extended from regular functions to functions $f\in BV_\G$ defining $\nabla _\G f$ as the vector valued measure 
\begin{equation*}
\nabla _\G f:=-\sigma _f \res \|\nabla _\G f\| =(-(\sigma _f)_1 \res \|\nabla _\G f\|  , \dots , -(\sigma _f)_{m} \res \|\nabla _\G f\| ),
\end{equation*}
where $(\sigma _f)_i$ are the components of $\sigma _f$ with respect to the base $X_i$.
\end{theo}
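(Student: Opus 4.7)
The plan is to mirror the classical Euclidean proof, replacing the Euclidean divergence by $\mathrm{div}_\G$ and using the Riesz representation theorem for vector-valued Radon measures. Define the linear functional
\[
L_f : C^1_c(\Omega, H\G) \to \R, \qquad L_f(\xi) := \int_\Omega f\,\mathrm{div}_\G \xi\, d\mathcal L^{m+n}.
\]
First I would verify local boundedness: for any open $V \Subset \Omega$ and any $\xi \in C^1_c(V, H\G)$ with $\|\xi\|_\infty \leq 1$, the definition of $\|\nabla_\G f\|(V)$ gives $|L_f(\xi)| \leq \|\nabla_\G f\|(V) < \infty$ since $f \in BV_{\G,\mathrm{loc}}(\Omega)$. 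By homogeneity and a standard cut-off/mollification argument, $L_f$ extends to a continuous linear functional on $C_c(V, H\G)$ with operator norm bounded by $\|\nabla_\G f\|(V)$.

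Next, by the Riesz representation theorem for $\R^m$-valued Radon measures on locally compact Hausdorff spaces, there exists a unique $\R^m$-valued Radon measure $\mu = (\mu_1, \dots, \mu_m)$ on $\Omega$ such that
\[
L_f(\xi) = \sum_{j=1}^m \int_\Omega \xi_j\, d\mu_j \qquad \text{for every } \xi \in C_c(\Omega, H\G),
\]
after identifying $\xi$ with its coordinates $(\xi_1,\dots,\xi_m)$ in the left-invariant basis $X_1,\dots,X_m$. Applying the polar decomposition to $\mu$ produces a Borel measurable map $\sigma_f : \Omega \to \R^m$ with $|\sigma_f| = 1$ pointwise $|\mu|$-a.e.\ such that $\mu = \sigma_f\, |\mu|$. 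Viewing $\sigma_f$ as a horizontal section $\sigma_f = \sum_j (\sigma_f)_j X_j$ one obtains the integration-by-parts identity of the statement.

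The remaining point is to identify $|\mu|$ with the variation $\|\nabla_\G f\|$ as defined in the excerpt. On one side, for any $\xi \in C^1_c(\Omega, H\G)$ with $|\xi|\leq 1$,
\[
\int_\Omega f\,\mathrm{div}_\G \xi\, d\mathcal L^{m+n} = \int_\Omega \langle \xi, \sigma_f\rangle\, d|\mu| \leq |\mu|(\mathrm{supp}\,\xi),
\]
so $\|\nabla_\G f\|(V) \leq |\mu|(V)$ for every open $V$. For the reverse inequality, given an open $V \Subset \Omega$, approximate $\sigma_f$ in $L^1(|\mu|)$ by continuous, compactly supported horizontal sections $\xi_k$ with $|\xi_k| \leq 1$ (using Lusin's theorem and a mollification along the horizontal layer, together with the inner regularity of $|\mu|$); then regularize each $\xi_k$ to a $C^1_c$ horizontal section while keeping the bound $|\xi_k|\leq 1$. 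Passing to the limit in $\int_\Omega \langle \xi_k, \sigma_f\rangle\, d|\mu| = L_f(\xi_k)$ gives $|\mu|(V) \leq \|\nabla_\G f\|(V)$, so the two measures coincide as Radon measures.

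The main obstacle is this last identification: the approximation of $\sigma_f$ must preserve the pointwise bound $|\xi_k|\leq 1$ and the $C^1$-smoothness simultaneously, which is slightly delicate in the Carnot setting because convolution in the Euclidean sense does not interact cleanly with the horizontal frame $(X_1,\dots,X_m)$. However, at each point we are approximating in the single fixed horizontal fibre $H_P\G \simeq \R^m$, so a standard radial truncation followed by a Euclidean mollification of the coordinates in the basis $X_j$ suffices, and the estimate $|\mu|(V) \leq \|\nabla_\G f\|(V)$ follows. Once $|\mu| = \|\nabla_\G f\|$, the definition $\nabla_\G f := -\sigma_f\,\|\nabla_\G f\|$ is consistent with integration by parts for smooth $f$ (where $-\mathrm{div}_\G$ is the formal adjoint of $\nabla_\G$), which justifies the sign and completes the proof.
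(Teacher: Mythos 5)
Your argument is correct and is essentially the canonical proof of this result: the paper states Theorem \ref{structure theorem BV} without proof, referring to \cite{biblioGARN} and \cite{biblioLIBROSC}, and the argument there is exactly your route (local boundedness of $\xi\mapsto\int_\Omega f\,\mathrm{div}_\G\xi\,d\mathcal L^{m+n}$ on $C^1_c(V,H\G)$ for $V\Subset\Omega$, extension by density, Riesz representation for $\R^m$-valued Radon measures in the global frame $X_1,\dots,X_m$, polar decomposition, and identification of $|\mu|$ with $\|\nabla_\G f\|$). The difficulty you flag at the end is indeed not an obstacle, for the reason you give: $H\G$ is globally trivialized by $X_1,\dots,X_m$, so horizontal sections are simply $\R^m$-valued functions and componentwise Euclidean mollification preserves the constraint $|\xi|\leq 1$ by convexity, which closes the inequality $|\mu|(V)\leq\|\nabla_\G f\|(V)$.
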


A set $\mcal E\subset \G$ has locally finite $\G$-perimeter, or is a $\G$-Caccioppoli set, if $\chi_{\mcal E} \in BV_{\G , loc }(\G)$, where $\chi_{\mcal E}$ is the characteristic function of the set $\mcal E$. In this case the measure $\| \nabla _\G \chi_{\mcal E}\|$ is called the $\G$-perimeter measure of $\mcal E$ and is denoted by $|\partial \mcal E|_\G$. Moreover we call generalized intrinsic normal of $\partial \E$ in $\Omega$ the vector $$\nu _\E (p):= -\sigma _{\chi_\E } (p).$$

\subsection{Complementary subgroups and graphs} A homogeneous subgroup $\W$ of $\G$ is a Lie subgroup such that $\delta _\lambda p\in \W$ for every $p\in \W$ and for all $\lambda >0$. Homogeneous subgroups are linear subspaces of $\R^{m+n}$, when $\G$ is identified with $\R^{m+n} $.
\begin{defi} We say that $\W$ and $\M$ are \emph{complementary subgroups in $\G$} if 
$\W$ and $\M$ are homogeneous subgroups of $\G$ such that  $\W \cap \M= \{ 0 \}$ and  $$\G=\W\cdot \M.$$  By this we mean that for every $p\in \G$ there are $p_\W\in \W$ and $p_\M \in \M$ such that $p=p_\W  p_\M$.
\end{defi}

If $\W$ and $\M$ are complementary subgroups of $\G$ and one of them is a normal subgroup then $\G$ is said to be the semi-direct product of $\W$ and $\M$. If both $\W$ and $\M$ are normal subgroups then $\G$ is said to be the direct product of $\W$ and $\M$. 

The elements $p_\W \in \W$ and $p_\M \in \M$ such that $p=p_\W \cdot p_\M$ are unique because of $\W \cap \M= \{ 0 \}$ and are denoted  components of $p$ along $\W$ and $\M$ or  projections of $p$ on $\W$ and $\M$.
The projection maps $\mathbf{P}_\W :\G \to \W$ and $\mathbf{P}_\M:\G \to \M$ defined
\[
\mathbf{P}_\W (p)=p_\W, \qquad \mathbf{P}_\M (p)=p_\M, \qquad \text{for all $p\in \G$,}
\]
are polynomial functions (see Proposition 2.2.14 in \cite{biblio22}) if we identify $\G$ with $\R^{m+n}$, hence are $C^\infty$. Nevertheless in general they are not  Lipschitz maps, when $\W$ and $\mathbb{M}$ are endowed with the restriction of the left invariant distance $d$ of $\G$ (see Example 2.2.15 in \cite{biblio22}). 

\begin{rem}\label{rem2.2.1}
The stratification of $\G$ induces a stratifications on the complementary subgroups $\W$ and $\M$. If $\G=\G^1\oplus \G^2$ then also $\W= \W^1\oplus \W^2$, $\M=\M^1\oplus \M^2$ and $\G^i =\W^i \oplus \M^i$. A subgroup is \emph{horizontal} if it is contained in the first layer $\G^1$. If $\M$ is horizontal then the complementary subgroup $\W$  is normal.
\end{rem}

\begin{prop}[\cite{biblio2}, Proposition 3.2]
If $\W$ and $\M$ are complementary subgroups in $\G$ there is $c_0=c_0(\W , \M)\in (0,1)$ such that for each  $p_\W \in \W$ and $p_\M \in \M$
\begin{equation}\label{c_0}
c_0(\| p_\W \|+\|p_\M \|)\leq \| p_\W  p_\M \| \leq \| p_\W \|+\|p_\M \|
\end{equation}
\end{prop}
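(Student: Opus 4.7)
The upper bound in \eqref{c_0} is immediate from the triangle inequality for the homogeneous norm: $\|p_\W p_\M\| \leq \|p_\W\| + \|p_\M\|$. So the entire content of the statement lies in the lower bound, together with the assertion that the resulting constant can be chosen in $(0,1)$.

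My plan for the lower bound is a standard compactness-and-homogeneity argument. Consider the (Euclidean-closed, bounded, hence Euclidean-compact) unit sphere
\[
S := \{p \in \G : \|p\| = 1\}
\]
with respect to the homogeneous norm \eqref{normadinfty}. The projections $\mathbf{P}_\W, \mathbf{P}_\M : \G \to \G$ are polynomial in the identification $\G = \R^{m+n}$ (as noted just before the statement), so the real-valued map
\[
F(p) := \|\mathbf{P}_\W(p)\| + \|\mathbf{P}_\M(p)\|
\]
is continuous on $\G$ and strictly positive on $\G \setminus \{0\}$ (since $p = p_\W p_\M = 0$ forces $p_\W = p_\M = 0$ by uniqueness of the decomposition). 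By compactness, $M := \max_{p \in S} F(p)$ is attained and lies in $(0, \infty)$.

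Next I would exploit that both $\W$ and $\M$ are homogeneous subgroups, so $\delta_\lambda$ commutes with the splitting: for any $\lambda > 0$ one has $(\delta_\lambda p)_\W = \delta_\lambda(p_\W)$ and $(\delta_\lambda p)_\M = \delta_\lambda(p_\M)$, and therefore $F(\delta_\lambda p) = \lambda F(p)$. Given arbitrary nonzero $p \in \G$, set $\lambda = \|p\|$ and $q = \delta_{1/\lambda} p \in S$; then
\[
\|p_\W\| + \|p_\M\| = F(p) = \lambda F(q) \leq \lambda M = M\,\|p\|,
\]
which yields the lower bound with constant $c_0 := 1/M$. Applying the already-proved upper bound to any nonzero $p = p_\W p_\M$ shows $F(p) \geq \|p\|$ on $S$, so $M \geq 1$ and hence $c_0 \in (0,1]$. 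If $c_0 = 1$ exactly, one can replace $c_0$ by any slightly smaller constant (e.g.\ $c_0/2$) to land strictly in $(0,1)$, as claimed.

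There is no real obstacle here beyond checking the two ingredients used above: the continuity (in fact smoothness) of the projections, which is cited from \cite{biblio22}, and the commutation of $\delta_\lambda$ with the splitting, which follows from the very definition of a homogeneous subgroup. The only place where one must be slightly careful is to ensure that $\|\mathbf{P}_\W(p)\| + \|\mathbf{P}_\M(p)\|$ does not vanish on $S$; but this is exactly the uniqueness of the decomposition $p = p_\W p_\M$ guaranteed by $\W \cap \M = \{0\}$.
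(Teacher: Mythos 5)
Your proof is correct and is essentially the standard homogeneity-plus-compactness argument: the paper itself does not prove this proposition but imports it from \cite{biblio2}, where the same reasoning (continuity and strict positivity of $p\mapsto\|\mathbf{P}_\W(p)\|+\|\mathbf{P}_\M(p)\|$ on the compact unit sphere, combined with $1$-homogeneity under $\delta_\lambda$) is used. Your closing observation that the upper bound forces $M\ge 1$, hence $c_0\le 1$, and that $c_0$ may be shrunk to lie strictly in $(0,1)$, is a correct and tidy way to meet the stated range of the constant.
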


\begin{defi}
 We say that $S\subset \G$ is a \emph{left intrinsic graph} or more simply a \emph{intrinsic graph} if there are complementary subgroups $\W$ and $\M$  in $\G$ and  $\phi: \mathcal O \subset \W \to \M$ such that
\[
S=\graph {\phi} :=\{ a \phi (a):\, a\in \mathcal O \}.
\]
\end{defi}
Observe that, by uniqueness of the components along $\W$ and $\M$, if $S=\graph {\phi}$ then $\phi $ is uniquely determined among all functions from $\W$ to $\M$.  

We call graph map of $\phi $, the function $\Phi :\mathcal O \to \G$ defined as
\begin{equation}\label{Phi}
\Phi (a):= a \cdot \phi (a) \quad \mbox{for all } a\in \mathcal O. 
\end{equation}
Hence $S=\Phi (\mathcal O )$ is equivalent to $S=\graph{\phi}$. 

The concept of intrinsic graph is preserved by translation and dilation, i.e.
\begin{prop}[Proposition 2.2.18, \cite{biblio22}]\label{P2.2.18} 
If $S$ is a intrinsic graph then, for all $\lambda >0$ and for all $q\in \G$, $q \cdot S$ and $\delta _\lambda S$ are intrinsic graphs. In particular, if $S=\graph {\phi}$ with $\phi :\mathcal O \subset \W \to \M$, then
\begin{enumerate}
\item
For all $\lambda >0$, \[\delta _\lambda \left(\graph {\phi}\right) =\graph {\phi _\lambda}\]  where
$\phi _\lambda :\delta _\lambda \mathcal O \subset \W \to \M $ and 
$ \phi _\lambda (a):= \delta _\lambda \phi (\delta _{1/\lambda }a)$,  for $a \in \delta _\lambda \mathcal O$.  
\item
For any $q\in \G$, \[q \cdot \graph {\phi} = \graph {\phi _q }\] where
$\phi _q : \mathcal O _q \subset \W \to \M$ is defined as 
$\phi _q (a):= (\mathbf P_\M (q^{-1}a))^{-1} \phi( \mathbf P_\W (q^{-1}a))$, for all $a \in \mathcal O_q:=\{ a\, :\, \mathbf P_\W (q^{-1}a)\in \mathcal O  \}$.  
\end{enumerate}
\end{prop}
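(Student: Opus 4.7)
The strategy is to verify both statements by direct calculation, making essential use of the uniqueness of the decomposition $p=p_\W p_\M$ (which follows from $\W\cap\M=\{0\}$), the fact that $\delta_\lambda$ is a group homomorphism preserving both $\W$ and $\M$, and the subgroup structure of $\M$.

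\emph{Part (1).} I would compute, for $a\in\mathcal O$,
\[
\delta_\lambda(a\cdot\phi(a))=\delta_\lambda(a)\cdot\delta_\lambda(\phi(a)),
\]
which is valid because $\delta_\lambda$ is a homomorphism. Since $\W,\M$ are homogeneous subgroups, $\delta_\lambda(a)\in\W$ and $\delta_\lambda(\phi(a))\in\M$, so the right-hand side is already of the form $b\cdot\psi(b)$ with $b=\delta_\lambda a\in\delta_\lambda\mathcal O$. Substituting $a=\delta_{1/\lambda}b$ yields $\delta_\lambda\phi(a)=\delta_\lambda\phi(\delta_{1/\lambda}b)=\phi_\lambda(b)$, and the equality $\delta_\lambda(\graph{\phi})=\graph{\phi_\lambda}$ follows.

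\emph{Part (2), inclusion $\graph{\phi_q}\subseteq q\cdot\graph{\phi}$.} I would fix $a\in\mathcal O_q$ and abbreviate $b:=\mathbf P_\W(q^{-1}a)\in\mathcal O$ and $m:=\mathbf P_\M(q^{-1}a)\in\M$, so that $q^{-1}a=b\cdot m$ and hence $a\cdot m^{-1}=q\cdot b$. Then
\[
a\cdot\phi_q(a)=a\cdot m^{-1}\cdot\phi(b)=q\cdot b\cdot\phi(b)\in q\cdot\graph{\phi}.
\]

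\emph{Part (2), inclusion $q\cdot\graph{\phi}\subseteq\graph{\phi_q}$.} I would pick $b\in\mathcal O$ and write $p:=q\cdot b\cdot\phi(b)=a\cdot m$ with $a:=\mathbf P_\W(p)\in\W$ and $m:=\mathbf P_\M(p)\in\M$. Then $q^{-1}a=b\cdot\phi(b)\cdot m^{-1}$; since $\M$ is a subgroup, $\phi(b)\cdot m^{-1}\in\M$, so by uniqueness of the $\W$--$\M$ decomposition of $q^{-1}a$ I obtain $\mathbf P_\W(q^{-1}a)=b$ (in particular $a\in\mathcal O_q$) and $\mathbf P_\M(q^{-1}a)=\phi(b)\cdot m^{-1}$. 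Substituting in the definition of $\phi_q$ gives
\[
\phi_q(a)=(\phi(b)\cdot m^{-1})^{-1}\cdot\phi(b)=m\cdot\phi(b)^{-1}\cdot\phi(b)=m,
\]
so $p=a\cdot\phi_q(a)\in\graph{\phi_q}$. The general claim that $q\cdot S$ and $\delta_\lambda S$ are intrinsic graphs for arbitrary intrinsic $S$ is then a corollary.

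The main obstacle, while not severe, lies in the bookkeeping for part (2): one has to repeatedly invoke uniqueness of the $\W$--$\M$ decomposition together with the subgroup property of $\M$ (which, by Remark \ref{rem2.2.1}, need not itself be normal) to check that the formally defined $\phi_q$ does indeed parametrize the translated graph.
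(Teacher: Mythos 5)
Your argument is correct: both inclusions in part (2) are verified cleanly using the uniqueness of the $\W$--$\M$ decomposition and the fact that $\M$ is a subgroup, and part (1) follows from $\delta_\lambda$ being an automorphism preserving the homogeneous subgroups. The paper itself gives no proof (it cites Proposition 2.2.18 of \cite{biblio22}), and your direct computation is exactly the standard argument for this statement.
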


\subsection{Intrinsic differentiability}


\begin{defi}
Let $\W$ and $\M$ be complementary subgroups in $\G$. Then $\ell:\W\to \M$ is  \emph{intrinsic linear} if $\ell$ is defined on all of $\W$ and if $\graph{\ell} $ is a homogeneous subgroup of $\G$.
\end{defi}

We use intrinsic linear functions to define intrinsic differentiability as in the usual definition of differentiability.
\begin{defi}\label{d3.2.1}
Let $\W$ and $\M$  be complementary subgroups in $\G$ and let $\phi :\mathcal O \subset \W \to \M$ with $\mathcal O$ open in $\W$. For $a\in \mathcal O$, let $p:=a\cdot \phi (a)$ and $\phi _{p^{-1}}: \mathcal O _{p^{-1}} \subset \W \to \M$ be the shifted function defined in Proposition $\ref{P2.2.18}$.
\begin{enumerate}
\item We say that $\phi$ is \emph{intrinsic differentiable in $a$} if the shifted function $\phi_{p^{-1}}$ is intrinsic dif\-fe\-ren\-tia\-ble in $0$, i.e. if there is a intrinsic linear $d\phi_a:\W\to \M$ such that
 \begin{equation*}\label{3.0}
\lim_{r\to 0^+}\sup_{0<\|b\|<r}\frac{\| d\phi_{a} (b)^{-1} \phi _{p^{-1}} (b) \|}{\|b\|} =0.
\end{equation*}
The function $d\phi_a$ is the \emph{intrinsic differential of $\phi $ at $a$}.

\item We say that $\phi$ is \emph{uniformly intrinsic differentiable in $a_0\in \mathcal O$} or $\phi$ is \emph{u.i.d. in $a_0$} if  there exist a intrinsic linear function $d\phi_{a_0}: \W \to \M$ such that
\begin{equation}\label{3.0.1}
\lim_{r\to 0^+}\sup_{a, b }\frac{\| d\phi_{a_0} (b)^{-1} \phi _{p^{-1}} (b) \|}{\|b\|} =0,
\end{equation}
where the supremum is for $\|a_0^{-1}a\|<r, \, 0<\|b\|<r.$ Analogously, $\phi$ is u.i.d. in $\mathcal O$ if it is u.i.d. in every point of $\mathcal O$. 
\end{enumerate}
\end{defi}


\begin{rem} Definition \ref{d3.2.1} is a natural one because of the following observations.

\emph{(i)} If $\phi$ is intrinsic differentiable in $a\in \mathcal O$, there is a unique  intrinsic linear function $d\phi_a$ satisfying $\eqref{3.0}$.  Moreover $\phi$ is continuous at $a$. (See Theorem 3.2.8 and Proposition 3.2.3 in \cite{biblio21}).

\emph{(ii)} The notion of intrinsic differentiability is invariant under group translations. Precisely, let $p:=a\phi (a), q:=b\phi (b)$, then $\phi $ is intrinsic differentiable in $a$ if and only if $\phi _{qp^{-1}} := (\phi _{p^{-1}})_{q}$ is intrinsic differentiable in $b$.

\emph{(iii)}  It is clear, taking $a=a_0$ in \eqref{3.0.1}, that if $\phi$ is uniformly intrinsic differentiable in $a_0$ then it is intrinsic differentiable in $a_0$ and $d\phi_{a_0}$ is the \emph{intrinsic differential of $\phi $ at $a_0$}.
\end{rem}

\medskip


From now on we restrict our setting  studying the notions of intrinsic differentiability and of uniform intrinsic differentiability for  functions $\phi: \W\to \HH$ when $\HH$ is a horizontal subgroup.  When $\HH$ is horizontal, $\W$ is always a normal subgroup since, as observed in Remark $\ref{rem2.2.1}$, it contains the whole strata $\G^2$.
In this case,   the more explicit form of the shifted function $\phi_{p^{-1}} $ allows  a more explicit form of equations \eqref{3.0} and \eqref{3.0.1}.

\begin{prop}[Theorem 3.5, \cite{biblioDDD}]\label{prop1.1.1}
Let $\W$ and $\HH$ be complementary subgroups of a step $2$ Carnot group $\G$, $\mathcal O$ open in $\W$ and $\HH$ horizontal. Then $\phi :\mathcal O \subset \W\to \HH$ is intrinsic differentiable in $a_0\in \mathcal O$ if and only if there is a intrinsic linear  $d\phi_{a_0}:\W\to \HH$ such that
\[
\lim_{r\to 0^+}\sup_{0<\|a_0^{-1}b\|<r} \frac{\|     \phi (b) -\phi (a_0)- d\phi_{a_0}( a_0^{-1}b )\|} {\|  \phi (a_0)^{-1} a_0^{-1}b \phi (a_0)\|} =0.
\]
Analogously,  $\phi$ is uniformly intrinsic differentiable in $a_0\in \mathcal O$, or $\phi$ is u.i.d. in $a_0\in \mathcal O$,  if  there is a intrinsic linear  $d\phi_{a_0}:\W\to \HH$ such that
\[
\lim_{r \to 0^+}\sup_{a,b }    \frac { \|  \phi ( b) - \phi (a)  - d\phi_{a_0}(a^{-1} b) \|}{\|\phi(a)^{-1}a^{-1}b\phi(a)  \|}  =0
\]
where $r$ is small enough so that $\mcal U(a_0,2r)\subset \mcal O$ and the supremum is for $\Vert{a_0^{-1} a}\Vert<r,\, 0<\Vert{a^{-1} b}\Vert<r.$

Finally, if $k<m$ is the dimension of $\HH$, and if, w.l.o.g., we assume that
\[
\HH=\{p: p_{k+1}=\dots =p_{m+n}=0\}\qquad \W=\{p: p_{1}=\dots =p_k=0\}\
\]  then there is a $k\times (m-k)$ matrix, here denoted as   $\nabla^\phi\phi(a_0)$, such that 
\begin{equation*}\label{DISSUdifferential}
d\phi_{a_0} (b)= \left(\nabla^\phi\phi(a_0) (b_{k+1},\dots,b_{m})^T,0,\dots ,0\right),
\end{equation*}
for all $b=(b_1,\dots,b_{m+n})\in \W$. The matrix $\nabla^\phi\phi(a_0)$ is called the \emph{intrinsic horizontal Jacobian} of $\phi$ in $a_0$ or the \emph{intrinsic horizontal gradient} or even the \emph{intrinsic gradient} if $k=1$.
\end{prop}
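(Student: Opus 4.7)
The plan is to reduce the abstract limit in Definition \ref{d3.2.1} to the explicit one in the statement by computing the shifted function $\phi_{p^{-1}}$ (with $p:=a_0\phi(a_0)$) in closed form, exploiting two structural features of the present setting. First, because $\HH$ is horizontal, Remark \ref{rem2.2.1} tells us that $\W$ contains the whole second layer $\G^2$ and is therefore normal in $\G$. Second, any horizontal subgroup of a step~$2$ Carnot group is abelian as a group, since the correction term in \eqref{5.1.0} lies in $V_2$ and must vanish on a subgroup of $V_1$; restricted to $\HH$, the group law coincides with vector addition.

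Exploiting normality of $\W$, for $\beta\in\W$ close to $0$ one writes $p\cdot\beta = a_0\bigl(\phi(a_0)\,\beta\,\phi(a_0)^{-1}\bigr)\phi(a_0)$, so that $\mathbf{P}_\HH(p\beta)=\phi(a_0)$ and $b:=\mathbf{P}_\W(p\beta)=a_0\,\phi(a_0)\beta\phi(a_0)^{-1}$. Hence $\phi_{p^{-1}}(\beta)=\phi(a_0)^{-1}\phi(b)$; the map $\beta\mapsto b$ is a bijection with inverse $\beta=\phi(a_0)^{-1}(a_0^{-1}b)\phi(a_0)$, and one has the identity $\|\beta\|=\|\phi(a_0)^{-1}a_0^{-1}b\phi(a_0)\|$, which is exactly the denominator appearing in the statement. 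The explicit formula \eqref{5.1.0} also shows that conjugation in step~$2$ preserves horizontal components and only perturbs the $V_2$-part by a bilinear term, so $\beta$ and $a_0^{-1}b$ share the same horizontal components and $\beta\to 0$ is equivalent to $a_0^{-1}b\to 0$.

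Abelianness of $\HH$ then rewrites the numerator as $\|d\phi_{a_0}(\beta)^{-1}\phi_{p^{-1}}(\beta)\|=\|\phi(b)-\phi(a_0)-d\phi_{a_0}(\beta)\|$. To conclude I need $d\phi_{a_0}(\beta)=d\phi_{a_0}(a_0^{-1}b)$, which follows from the fact that $d\phi_{a_0}$ vanishes on $V_2\cap\W$. This last property is forced by intrinsic linearity: for pure vertical $b\in V_2\cap\W$, homogeneity gives $d\phi_{a_0}(\lambda^2 b)=\lambda\,d\phi_{a_0}(b)$ (the dilation on $V_2$ is quadratic, while on $\HH\subset V_1$ it is linear), whereas additivity over $\N$ gives $d\phi_{a_0}(\lambda^2 b)=\lambda^2\,d\phi_{a_0}(b)$, so $d\phi_{a_0}(b)=0$. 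Combined with the preservation of horizontal parts under conjugation, this gives the desired equality and hence the first claim. The uniform version is proved by the same substitution carried out at a moving base point $a\in\mcal U(a_0,r)$, with $d\phi_{a_0}$ kept fixed; continuity of $\phi$ near $a_0$ bounds $\phi(a)$ and so keeps the change of variable $\beta\leftrightarrow b$ under uniform control.

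For the matrix representation, I use that $d\phi_{a_0}\colon\W\to\HH$ vanishes on $V_2\cap\W$ (as just shown) and is $\R$-linear on the horizontal complement of $\HH$ inside $V_1$, whose coordinates are $b_{k+1},\dots,b_m$; this linear map into $k$-dimensional $\HH$ is encoded by a unique $k\times(m-k)$ matrix $\nabla^\phi\phi(a_0)$, giving the formula. The only delicate point is the bookkeeping comparing the suprema over $\|\beta\|<r$ and over $\|a_0^{-1}b\|<r$: one must check that through the change of variable $\beta\leftrightarrow b$ both conditions produce the same $r\to 0^+$ limit, which reduces to the observation that $\|\beta\|$ and $\|a_0^{-1}b\|$ vanish together (and uniformly on neighborhoods of $a_0$ in the u.i.d. case).
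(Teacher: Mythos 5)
The paper does not actually prove this proposition: it is imported verbatim as Theorem 3.5 of the cited reference \cite{biblioDDD}, so there is no in-paper argument to compare against. Your proof is correct and follows the standard route used there — normality of $\W$ yields the closed form $\phi_{p^{-1}}(\beta)=\phi(a_0)^{-1}\phi(b)$ with $\beta=\phi(a_0)^{-1}a_0^{-1}b\,\phi(a_0)$, the mismatch between the quadratic dilation weight on $\G^2$ and additivity forces every intrinsic linear map to vanish on $\G^2$, and conjugation in a step-$2$ group preserves horizontal components, which together turn the abstract limit of Definition \ref{d3.2.1} into the stated one (your handling of the change of sup-range and of the uniform case via local boundedness of $\phi$ is the right, if briefly stated, way to close the remaining technical point).
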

 
 Observe that u.i.d. functions do exist. In particular, when $\HH$ is a horizontal subgroup,  $\HH$ valued euclidean $C^1$ functions are u.i.d.

\begin{theo}[Theorem 4.7, \cite{biblioDDD}]\label{propC1implicauid}
If $\W$ and $\HH$ are complementary subgroups of $\G$ with $\HH$  hori\-zon\-tal and $k$ dimensional. If
 $\mathcal O$ is open in $\W$ and  $\phi :\mathcal O \subset \W \to \HH $ is such that $\phi  \in C^1( \mathcal O, \HH)$ then $\phi$ is u.i.d. in $ \mathcal O$. 
\end{theo}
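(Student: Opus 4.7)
The strategy is to verify directly the uniform intrinsic differentiability characterization given by Proposition \ref{prop1.1.1}, comparing the intrinsic linearization of $\phi$ with the classical Euclidean one. Fix $a_0\in\mathcal O$ and work in the coordinates in which $\HH=\{p_{k+1}=\dots=p_{m+n}=0\}$ and $\W=\{p_1=\dots=p_k=0\}$. As the candidate intrinsic differential I would take $d\phi_{a_0}(v) = (\nabla^\phi\phi(a_0)(v_{k+1},\dots,v_m)^T,0,\dots,0)$ for $v\in\W$, where the $k\times(m-k)$ matrix $\nabla^\phi\phi(a_0)$ is obtained from the Euclidean Jacobian $J\phi(a_0)$ by adding to its horizontal block a correction equal to the vertical block of $J\phi(a_0)$ contracted against a bilinear form depending on $\phi(a_0)$, the coordinates of $a_0$, and the structure matrices $\mathcal B^{(i)}$. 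The precise form of this correction is forced by the conjugation identity in the step $2$ group; in the Heisenberg setting it reduces to the familiar formula $\partial_{x_2}\phi_1 + \phi_1\partial_{y}\phi_1$.

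For the estimate fix $r>0$ small and $a,b\in\W$ with $\|a_0^{-1}a\|<r$, $0<\|a^{-1}b\|<r$; set $v:=a^{-1}b$ and $h:=\phi(a)\in\HH$. A direct computation from \eqref{5.1.0}, combined with the skew-symmetry of the matrices $\mathcal B^{(i)}$, yields two key identities: (i) the horizontal part of $v$ equals $(b-a)_{\mathrm{hor}}$ while its vertical part equals $(b-a)_{\mathrm{vert}}$ plus a bilinear expression in the horizontal coordinates of $a$ and $b$; (ii) the conjugate $h^{-1}vh$ has the same horizontal part as $v$ and vertical part $y_v+\langle\mathcal B\,x_h,x_v\rangle$. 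Combining these with the classical uniform $C^1$-Taylor expansion $\phi(b)-\phi(a) = J\phi(a_0)(b-a) + \omega(a,b)$, with $|\omega(a,b)| = o(|b-a|)$ uniformly as $a,b\to a_0$ (using the continuity of $J\phi$), and using the specific choice of $\nabla^\phi\phi(a_0)$ to cancel the piece linear in $v_{\mathrm{hor}}$ evaluated at $a=a_0$, leaves
\[
\phi(b) - \phi(a) - d\phi_{a_0}(v) = J_{\mathrm{vert}}\phi(a_0)\,y_{h^{-1}vh} + \omega(a,b) + O(r)\|v\|,
\]
where $J_{\mathrm{vert}}\phi(a_0)$ denotes the vertical block of the Euclidean Jacobian.

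For the denominator, \eqref{normadinfty} together with identity (ii) gives $\|h^{-1}vh\|=\max\{|x_v|,\varepsilon|y_{h^{-1}vh}|^{1/2}\}$. The main term in the numerator is then bounded by $C|y_{h^{-1}vh}|\le C\varepsilon^{-2}\|h^{-1}vh\|^2$, so its ratio to $\|h^{-1}vh\|$ is $O(\|h^{-1}vh\|)$, which tends to $0$ as $r\to 0^+$. For the error terms I would combine identities (i) and (ii) to express $(b-a)_{\mathrm{vert}}$ in terms of $y_{h^{-1}vh},x_h,x_v$ and $a''$, obtaining $|b-a|\le C\|h^{-1}vh\|$ uniformly for $r$ small (on a bounded neighborhood of $a_0$); then $\omega(a,b)/\|h^{-1}vh\| = o(1)$ and $O(r)\|v\|/\|h^{-1}vh\|=O(r)\to 0$, giving the required uniform vanishing of the supremum in \eqref{3.0.1}. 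The main obstacle I expect is pinpointing the exact correction inside $\nabla^\phi\phi(a_0)$ that cancels, term by term, the bilinear shifts coming both from the $\W$ group law and from conjugation by $\phi(a_0)$; this is the algebraic identity that separates intrinsic differentiability from Euclidean differentiability in a non-abelian group, and once it is in place the rest is a non-isotropic refinement of the standard implication that Euclidean $C^1$-regularity yields uniform Euclidean differentiability.
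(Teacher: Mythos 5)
This statement is quoted in the paper as Theorem~4.7 of \cite{biblioDDD} and no proof is reproduced here, so there is nothing internal to compare against; your direct verification of the characterization in Proposition~\ref{prop1.1.1} is correct and is essentially the argument of the cited source (Euclidean $C^1$ Taylor expansion, the two group-law identities for $a^{-1}b$ and for conjugation by $\phi(a)$, and the quadratic gain $|y_{h^{-1}vh}|\leq \varepsilon^{-2}\Vert h^{-1}vh\Vert^2$ from the homogeneous norm). One small imprecision: the bound $O(r)\Vert v\Vert/\Vert h^{-1}vh\Vert=O(r)$ is not literally valid since $\Vert v\Vert$ can be of order $\Vert h^{-1}vh\Vert^{1/2}$, but the residual terms you are estimating there are linear in the horizontal part $v_{\mathrm{hor}}$ only, and $|v_{\mathrm{hor}}|\leq\Vert h^{-1}vh\Vert$, so the conclusion stands.
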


In \cite{biblioDDD}, the author gets a comparison between $\G$-regular surfaces (see Definition \ref{Gregularsurfaces}) and the uniformly intrinsic differentiable maps. 
\begin{theo}\label{teo4.1}
Let $\W$ and $\HH$ be complementary subgroups of a step $2$ Carnot group $\G$ with $\HH$  horizontal and $k$ dimensional. 
Let $\mathcal O$ be open in $\W$, $\phi :\mathcal O \subset \W \to \HH $ and $S:= \graph{\phi}$. Then the following are equivalent:
\begin{enumerate}
\item there are $ \mathcal U$ open in $\G$ and $f=(f_1,\dots, f_k)\in C_\G^1(  \mathcal U; \R^k)$ such that 
\begin{equation*}
\begin{split}
& S=\{p\in  \mathcal  U: f(p)=0\}\\
& d_{\bf P}f(q)_{\vert \HH}:\HH\to \R^k\quad \text{is bijective for all $q\in \mathcal U$}
\end{split}
\end{equation*}
and $q\mapsto \left(d_{\bf P}f(q)_{\vert \HH}\right)^{-1}$ is continuous.
\item $\phi $ is u.i.d. in $\mathcal O$. 
\end{enumerate}
Moreover, if {\rm(1)} or equivalently {\rm (2)}, hold then, for all $a\in \mathcal O$ the intrinsic differential $d\phi_a$ is
\[
d\phi_a= - \left(d_{\bf P}f(a\phi(a))_{\vert \HH} \right)^{-1}\circ d_{\bf P}f(a\phi(a))_{\vert \W}.
\]
Finally, if, without loss of generality, we choose a base $X_1,\dots, X_{m+n}$ of $\mathfrak g$ such that $X_1,\dots, X_k$ are  horizontal vector fields, $\HH=\exp(\text{\rm span} \{X_1,\dots, X_k\})$ and $\W=\exp(\text{\rm span} \{X_{k+1},\dots, X_{m+n}\})$ then
\[
\HH=\{p: p_{k+1}=\dots =p_{m+n}=0\}\qquad \W=\{p: p_{1}=\dots =p_k=0\},
\] 
$
 \nabla _\G f= \left(
\, \mathcal{M}_1 \, \, | \,\, \mathcal{M}_2 \,
\right)
$
where 
 \begin{equation*}
\mathcal{M}_1 := \begin{pmatrix}
X_1f_1 \dots  X_kf_1 \\
\vdots \qquad \ddots \qquad \vdots \\
X_1f_k \dots  X_kf_k
\end{pmatrix},\qquad\mathcal{M}_2 := \begin{pmatrix}
X_{k+1}f_1\dots  X_{m}f_1 \\
\vdots \qquad \ddots \qquad \vdots \\
X_{k+1}f_k \dots  X_{m}f_k
\end{pmatrix}.
\end{equation*}
Finally, for all $q\in \mathcal U$, for all $a\in \mathcal O$ and for all $p\in \G$ $$\left(d_{\bf P}f(q)\right)(p)= \left(\nabla _\G f(q)\right)p^1$$ and the intrinsic differential is 
\begin{equation}\label{teo4.1.1N}
\begin{split}
d\phi_a(b)&= \left( \left(\nabla^\phi \phi(a)\right)(b_{k+1},\dots,b_{m})^T,0 ,\dots , 0 \right)\\
&= \left( \left( - \mathcal M_1(a\phi(a))^{-1}\mathcal M_2(a\phi(a))\right)(b_{k+1},\dots,b_{m})^T,0 ,\dots , 0 \right),
\end{split}
\end{equation}
for all $b\in \W$.

\end{theo}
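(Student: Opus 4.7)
I would split the proof into the two implications and then extract the explicit formula for $d\phi_a$ by a Pansu chain-rule argument. For the direction $(1)\Rightarrow(2)$, fix $a_0\in\mathcal O$ and set $p_0:=a_0\phi(a_0)$. Since $d_{\mathbf P}f(p_0)_{\vert\HH}:\HH\to\R^k$ is a linear isomorphism, the candidate map
\[
L_{a_0}:=-\bigl(d_{\mathbf P}f(p_0)_{\vert\HH}\bigr)^{-1}\circ d_{\mathbf P}f(p_0)_{\vert\W}:\W\to\HH
\]
is a composition of homogeneous group homomorphisms and hence intrinsic linear. Exploiting the $C^1_\G$ regularity of $f$, one has the uniform Pansu expansion $|f(p)-f(q)-d_{\mathbf P}f(q)(q^{-1}p)|=o(\|q^{-1}p\|)$ for $q$ in a compact neighbourhood of $p_0$. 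Applying this with $q=a\phi(a)$ and $p=b\phi(b)$, both of which annihilate $f$, and using continuity of $q\mapsto(d_{\mathbf P}f(q)_{\vert\HH})^{-1}$, one solves for the $\HH$-component of $q^{-1}p$ and recovers precisely the uniform estimate of Proposition \ref{prop1.1.1} that characterises u.i.d.\ with differential $L_{a_0}$.

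For $(2)\Rightarrow(1)$, choose coordinates so that $\HH=\{p:p_{k+1}=\dots=p_{m+n}=0\}$ and $\W=\{p:p_{1}=\dots=p_{k}=0\}$, and identify $\HH$ with $\R^k$ via the first $k$ coordinates. I would define
\[
f(p):=\bigl(p_1-\phi_1(\mathbf P_\W(p)),\dots,p_k-\phi_k(\mathbf P_\W(p))\bigr)
\]
on the tubular open set $\mathcal U:=\{p\in\G:\mathbf P_\W(p)\in\mathcal O\}$, so that $\{f=0\}\cap\mathcal U=S$. At each $q=a\phi(a)\in S$ the u.i.d.\ of $\phi$ at $a$ yields directly that the Pansu differential of $f$ exists and, on the horizontal layer, acts by $d_{\mathbf P}f(q)(b)=b_\HH-d\phi_a(b_\W)$, where $b_\HH,b_\W$ denote the components of $b$ in the horizontal splitting induced by $\HH\oplus\W$. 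Thus the horizontal Jacobian reads $\nabla_\G f(q)=(\mathcal I_k\,|\,-\nabla^\phi\phi(a))$ on $S$; the polynomial character of $\mathbf P_\W,\mathbf P_\HH$ together with the continuity of $\nabla^\phi\phi$ on $\mathcal O$ (which follows from uniform intrinsic differentiability) lets one extend the continuity of the horizontal derivatives $X_lf_j$ to the full neighbourhood $\mathcal U$. Finally $d_{\mathbf P}f(q)_{\vert\HH}$ is the identity on $\HH\cong\R^k$, hence trivially bijective with continuous inverse.

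The formula in \eqref{teo4.1.1N} then follows by differentiating $f\circ\Phi\equiv0$ on $\mathcal O$ via a Pansu chain rule and splitting the differential of $\Phi(b)=b\cdot\phi(b)$ into its $\W$ and $\HH$ parts:
\[
d_{\mathbf P}f(a\phi(a))_{\vert\W}+d_{\mathbf P}f(a\phi(a))_{\vert\HH}\circ d\phi_a=0,
\]
which inverts to the displayed expression, and the matrix form is read off in the adapted basis $X_1,\dots,X_{m+n}$. The main obstacle I anticipate is the $C^1_\G$ verification in direction $(2)\Rightarrow(1)$: on $S$ the Pansu differential is directly given by the intrinsic differential of $\phi$, but extending this to a genuinely continuous horizontal gradient on a full open neighbourhood of $S$ requires uniform rather than pointwise intrinsic differentiability, so that the estimates remain controlled as the base point $q$ is perturbed off $S$ and the associated parameter $a=\mathbf P_\W(q)$ varies.
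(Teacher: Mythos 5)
First, a remark on provenance: the paper does not actually prove Theorem \ref{teo4.1}; it is quoted from \cite{biblioDDD}, so there is no internal proof to compare against. Your sketch of $(1)\Rightarrow(2)$ is, in outline, the standard implicit-function argument used in that reference: the locally uniform Pansu expansion of $f$ at two points of $S$, split along $\G=\W\cdot\HH$ into the components $\mathbf{P}_\W(q^{-1}p)=\phi(a)^{-1}a^{-1}b\phi(a)$ and $\mathbf{P}_\HH(q^{-1}p)=\phi(a)^{-1}\phi(b)$, together with the continuity of $q\mapsto(d_{\mathbf P}f(q)_{\vert\HH})^{-1}$, yields exactly the uniform estimate of Proposition \ref{prop1.1.1}. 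That half is fine, as is the chain-rule derivation of \eqref{teo4.1.1N} once the equivalence is in place.

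The direction $(2)\Rightarrow(1)$, however, contains a genuine and fatal gap, located exactly where you flagged ``the main obstacle'': the candidate $f(p)=\mathbf{P}_\HH(p)-\phi(\mathbf{P}_\W(p))$ is in general \emph{not} $C^1_\G$ on any open neighbourhood of $S$, and no amount of uniformity in the intrinsic differentiability of $\phi$ repairs this. Test it in $\HH^1$ with $\V=\{(x_1,0,0)\}$ and $\W=\{(0,x_2,y)\}$: here $\mathbf{P}_\W(p)=(0,x_2,y+\tfrac{x_1x_2}{2})$, and a direct computation gives $X_1f\equiv 1$ but
\begin{equation*}
X_2f(p)=-\bigl(\partial_{x_2}\phi+x_1\,\partial_{y}\phi\bigr)\bigl(\mathbf{P}_\W(p)\bigr),
\end{equation*}
where the coefficient of $\partial_y\phi$ is the \emph{free} coordinate $x_1$ of $p$, not $\phi(\mathbf{P}_\W(p))$; only on $S$ do the two coincide and reproduce $-D^\phi\phi$. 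If $X_2f$ were (distributionally represented by) a continuous function on an open set, then comparing its values on two fibres $x_1=c$ and $x_1=c'$ over the same base point of $\W$ would force $\partial_y\phi$ to be a continuous function, hence $\phi$ to be Euclidean $C^1$ — incompatible with the existence of u.i.d.\ maps whose graphs are Euclidean fractals (Kirchheim–Serra Cassano; see also the discussion after Definition \ref{Gregularsurfaces}). The same obstruction appears in every step-2 group: for $l>k$ the derivative $X_lf_j$ off the graph involves $Y_s\phi_j$ weighted by the unconstrained $\HH$-coordinates of $p$, whereas u.i.d.\ only controls the combination appearing in $D^\phi_l\phi_j$. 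Consequently this implication cannot be proved by exhibiting $f$ through the explicit formula above; the proof in \cite{biblioDDD} instead goes through a genuinely different and more delicate construction (an approximation of $\phi$ by Euclidean $C^1$ maps together with their intrinsic gradients, and a stability argument for the associated level-set functions), which is precisely the content your proposal is missing.
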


\subsection{Intrinsic Lipschitz Function}
The following notion of intrinsic Lipschitz function appeared for the first time in \cite{biblio6} and was studied, more diffusely, in \cite{biblio17, biblio27, biblio21, biblio22,  biblio24}. Intrinsic Lipschitz functions play the same role as Lipschitz functions in Euclidean context. 
\begin{defi}
Let $\W ,\HH$ be complementary subgroups in $\G$, $\phi : \mathcal O \subset \W \to \HH$. We say that $\phi $ is \emph{intrinsic $C_L$-Lipschitz  in $\mathcal O$}, or simply intrinsic Lipschitz, if there is $C_L\geq 0$ such that
\[
\|\mathbf{P}_\HH (q^{-1} q') \|  \leq C_L  \|\mathbf{P}_\W (q^{-1} q')\|,  \qquad \text{for all $q,q' \in \graph {\phi} $.}
\]
$\phi : \mathcal O \to \HH$ is locally intrinsic Lipschitz in $\mathcal O$ if $\phi $ is intrinsic Lipschitz in $\mathcal O '$ for every $\mathcal O' \Subset \mathcal O$. 
\end{defi}

If $\phi : \mathcal O \subset \W \to \HH$ is intrinsic $C_L$-Lipschitz in $\mathcal O$ then it is continuous. Indeed if $\phi (0)=0$ then $\phi$ is continuous in $0$. To prove the continuity in $a\in \mathcal O$, observe that $\phi _{q^{-1}}$ is continuous in $0$, where $q=a\phi (a)$.

\begin{rem} \label{lip0}
In this paper we are interested mainly in the special case  when $\HH$ is a horizontal subgroup and consequently  $\W$ is a normal subgroup. 
Under these assumptions, for all $p= a\phi (a),q=b \phi (b) \in \graph {\phi} $ we have 
\[\mathbf{P}_\HH (p^{-1} q)= \phi (a)^{-1}\phi(b),\quad  \mathbf{P}_\W (p^{-1} q)=\phi (a)^{-1} a^{-1}b\phi (a).
\]
 Hence, if $\HH$ is a horizontal subgroup, $\phi : \mathcal O \subset \W \to \HH$ is intrinsic Lipschitz if 
\[
\|\phi (a)^{-1}\phi(b)\| \leq  C_L \|\phi (a)^{-1} a^{-1}b\phi (a) \| \qquad \text{for all  $a,b \in \mathcal O$.}
\]
Moreover,  if $\phi$ is intrinsic Lipschitz then  $\|\phi (a)^{-1} a^{-1}b\phi (a) \|$ is comparable with $\Vert{p^{-1} q}\Vert$. Indeed from \eqref{c_0}
\begin{equation*}
\begin{split}
c_0\|\phi (a)^{-1} a^{-1}b\phi (a) \| &\leq \Vert{p^{-1} q}\Vert\\ &\leq \|\phi (a)^{-1} a^{-1}b\phi (a) \|+\|\phi (a)^{-1}\phi(b)\|\\
&\leq (1+C_L) \|\phi (a)^{-1} a^{-1}b\phi (a) \|.
\end{split}
\end{equation*}
The quantity $\|\phi (a)^{-1} a^{-1}b\phi (a) \|$, or better a symmetrized version of it, can play the role of a $\phi$ dependent, quasi distance on $\mathcal O$.  See e.g. \cite{biblio1}. 
\end{rem}

\begin{rem}
A map $\phi$ is intrinsic $C_L$-Lipschitz if and only if the distance of two points $q, q'\in $ graph$(\phi )$ is bounded by the norm of the projection of $q^{-1} q'$ on the domain $\mathcal O$. Precisely $\phi :\mathcal O \subset \W \to \HH$ is intrinsic $C_L$-Lipschitz in $\mathcal O$ if and only if there exists a constant $C_1>0$ satisfying
\begin{equation*}\label{rellip} 
\| q^{-1} q' \|\leq C_1\| \mathbf{P}_\W (q^{-1} q') \|, 
\end{equation*}
for all $q,q' \in \graph{\phi}$. Moreover the relations between $C_1$ and the Lipschitz constant $C_L$ of $\phi $ follow from $\eqref{c_0}$. In fact if $\phi $ is intrinsic $C_L$-Lipschitz in $\mathcal O $ then
\begin{equation*}
\| q^{-1} q' \|\leq \| \mathbf{P}_\W (q^{-1} q') \| +\| \mathbf{P}_\HH (q^{-1} q') \| \leq (1+C_L)\| \mathbf{P}_\W (q^{-1} q') \|,
\end{equation*}
for all $q,q' \in \graph{\phi}$. Conversely if $\| q^{-1} q' \|\leq c_0 (1+C_L)\| \mathbf{P}_\W (q^{-1} q') \|$ then
\begin{equation*}
\| \mathbf{P}_\HH (q^{-1} q' )\|\leq C_L\| \mathbf{P}_\W (q^{-1} q') \|,
\end{equation*}
for all $q,q' \in \graph{\phi}$.
\end{rem}

We observe that in Euclidean spaces intrinsic Lipschitz maps are the same as Lipschitz maps. The converse is not true (see Example 2.3.9 in \cite{biblio21}) and if $\phi :\W \to \HH$ is intrinsic Lipschitz  then this does not yield the existence of a constant $C$ such that
\[
\|\phi (a)^{-1}\phi (b)\|\leq  C \|a^{-1}b\| \quad \mbox{for } a,b\in \W
\]
not even locally.  In Proposition 3.1.8 in \cite{biblio22} the authors proved that the intrinsic Lipschitz functions, even if non metric Lipschitz, nevertheless are H\"older continuous.
\begin{prop}\label{lip84} 
Let $\W ,\HH$ be complementary subgroups in $\G$ and $\phi :\mathcal O \subset \W \to \HH$ be an intrinsic $C_L$-Lipschitz function. Then, for all $r>0$,
\begin{enumerate}
\item there is $C_1= C_1(\phi, r)>0$ such that 
\[
\|\phi (a)\| \leq C_1 \quad \text{ for all $a\in \mathcal O$ with $\|a\|\leq r;$}
\]
\item there is $C_2= C_2(C_L, r)>0$ such that  $\phi$ is locally $1/2 $-H\"older continuous, i.e., 
\begin{equation*}
\|\phi (a)^{-1}\phi (b)\|\leq C_2 \|a^{-1}b\|^{1/2 }\quad \text{for all $a, b$ with $\|a\| ,  \|b\| \leq r.$}
\end{equation*}
\end{enumerate}
\end{prop}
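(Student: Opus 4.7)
The plan is to reduce both statements to controlling the norm of the conjugate $\phi(a)^{-1}\, a^{-1}b\, \phi(a)$, exploiting the characterization of intrinsic Lipschitz maps from Remark~\ref{lip0} that is available because $\HH$ is horizontal and hence $\W$ is normal (in particular the conjugate lies in $\W$). Throughout, this characterization reads
\begin{equation*}
\|\phi(a)^{-1}\phi(b)\| \leq C_L \, \|\phi(a)^{-1}\, a^{-1}b\, \phi(a)\|, \qquad a,b\in \mathcal{O}.
\end{equation*}

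For part (1), I would fix any $a_0 \in \mathcal{O}$ with $\|a_0\|\leq r$ (if no such $a_0$ exists the statement is vacuous). Applied to the pair $(a_0, a)$ with $\|a\|\leq r$, the triangle-type subadditivity of $\|\cdot\|$ yields
\begin{equation*}
\|\phi(a_0)^{-1}\, a_0^{-1}a\, \phi(a_0)\| \leq 2\|\phi(a_0)\| + \|a_0\| + r,
\end{equation*}
and then $\|\phi(a)\| \leq \|\phi(a_0)\| + C_L\bigl(2\|\phi(a_0)\| + \|a_0\| + r\bigr) =: C_1(\phi, r)$.

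For part (2), the key is an explicit coordinate computation. Writing $\phi(a) = (h, 0_{m-k}, 0_n) \in \HH$ and $a^{-1}b = (0_k, u, v)\in \W$, and decomposing each $\mathcal{B}^{(i)}$ into $k\times k$, $k\times(m-k)$, $(m-k)\times k$, $(m-k)\times(m-k)$ blocks
\begin{equation*}
\mathcal{B}^{(i)} = \begin{pmatrix} A^{(i)} & B^{(i)} \\ -(B^{(i)})^T & D^{(i)}\end{pmatrix},
\end{equation*}
with $A^{(i)}, D^{(i)}$ skew-symmetric, a direct application of the group law \eqref{5.1.0} gives
\begin{equation*}
\phi(a)^{-1}\, a^{-1}b\, \phi(a) = (0_k, u, v'), \qquad v'_i = v_i - h^T B^{(i)} u,
\end{equation*}
where the purely quadratic terms $h^T A^{(i)} h$ vanish by skew-symmetry of $A^{(i)}$. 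In particular $|v' - v| \leq C_\mathcal{B}\, |h|\,|u|$ for a constant $C_\mathcal{B}$ depending only on $\G$. Using \eqref{normadinfty} and $\sqrt{s+t}\leq \sqrt{s}+\sqrt{t}$,
\begin{equation*}
\|\phi(a)^{-1}\, a^{-1}b\, \phi(a)\| \leq |u| + \varepsilon|v|^{1/2} + \varepsilon\, C_\mathcal{B}^{1/2}\, |h|^{1/2}\, |u|^{1/2}.
\end{equation*}
Since $\|a\|,\|b\|\leq r$ forces $\|a^{-1}b\|\leq 2r$, both $|u|$ and $\varepsilon|v|^{1/2}$ are bounded by $\|a^{-1}b\|\leq (2r)^{1/2}\|a^{-1}b\|^{1/2}$; together with $|h|=|\phi(a)|\leq C_1$ from part (1), this yields $\|\phi(a)^{-1}\, a^{-1}b\, \phi(a)\| \leq C'\|a^{-1}b\|^{1/2}$, and multiplying by $C_L$ gives the asserted $1/2$-Hölder estimate.

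The main technical step is the explicit computation of the conjugation, and in particular the observation that thanks to skew-symmetry only the mixed term $h^T B^{(i)} u$ survives in $v'-v$: a purely linear-in-$u$ or linear-in-$h$ contribution would spoil the Hölder exponent. Once this bilinear shape is in place, the loss from Lipschitz to $1/2$-Hölder is automatic, since the homogeneous norm takes a square root on the vertical layer and $\sqrt{|h||u|}=|h|^{1/2}|u|^{1/2}$. No serious analytic difficulty is expected beyond this linear-algebraic check.
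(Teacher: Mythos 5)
The paper does not actually prove this proposition: it is quoted from Proposition~3.1.8 of \cite{biblio22}, so there is no internal proof to compare against. Judged on its own terms, your argument is correct and complete \emph{in the case where $\HH$ is horizontal and $\W$ is normal}: the conjugation computation checks out (with $\mathcal{B}^{(i)}=\bigl(\begin{smallmatrix} A^{(i)} & B^{(i)} \\ -(B^{(i)})^T & D^{(i)}\end{smallmatrix}\bigr)$ the second-layer correction is exactly $-h^TB^{(i)}u$, the $h^TA^{(i)}h$ terms cancel by skew-symmetry), and the passage from the bilinear term $|h|\,|u|$ to the $1/2$-H\"older bound via \eqref{normadinfty} is sound, as is the bound in part (1). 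You should, however, say explicitly that putting $\HH=\{p_{k+1}=\dots=p_{m+n}=0\}$ and $\W=\{p_1=\dots=p_k=0\}$ is itself a reduction (a linear change of coordinates in the first layer as in Remark~\ref{remChange}, which replaces the homogeneous norm by an equivalent one).

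The genuine gap is one of generality: the proposition is stated for \emph{arbitrary} complementary subgroups $\W,\HH$, while your proof leans on the identities $\mathbf{P}_\HH(q^{-1}q')=\phi(a)^{-1}\phi(b)$ and $\mathbf{P}_\W(q^{-1}q')=\phi(a)^{-1}a^{-1}b\,\phi(a)$ from Remark~\ref{lip0}, which require $\W$ normal, and on $\phi(a)$ having no vertical component, which requires $\HH$ horizontal. If $\HH$ contains second-layer directions neither the projection formulas nor your conjugation computation apply, and the general proof (as in \cite{biblio22}) has to work directly with the projections $\mathbf{P}_\W,\mathbf{P}_\HH$. Since the present paper only ever uses the horizontal, one-dimensional case, this is a restriction rather than an error, but it should be stated. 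Two smaller points: the constant you obtain in (2) depends on $C_1(\phi,r)$ from part (1) and not only on $(C_L,r)$ as the statement literally asserts (this dependence appears unavoidable with this method and is also present in the cited reference, but it is worth flagging); and your closing remark that a ``linear-in-$u$'' term would spoil the exponent is not right --- a contribution $c^Tu$ to $v'-v$ would still give $\varepsilon|v'|^{1/2}\le \varepsilon|v|^{1/2}+\varepsilon|c|^{1/2}|u|^{1/2}$ and hence the same H\"older bound; what would genuinely break the argument is a term that does not vanish as $b\to a$.
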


Now we present a result which we will use later:


\begin{prop}[Proposition 3.6 \cite{biblioDDD}]\label{lip1512DDD} 
Let $\HH$, $\W$ be complementary subgroups of $\G$ with $\HH$ hori\-zon\-tal. Let  $\mathcal O$ be open in $\W$ and $\phi :\mathcal O\to \HH$ be u.i.d. in $\mathcal O$. Then
\begin{enumerate}
\item $\phi$ is intrinsic Lipschitz continuous in every relatively compact subset of $\mathcal O;$
\item the function $a \mapsto d\phi_{a}$ is continuous in $\mathcal O$.
\end{enumerate}
\end{prop}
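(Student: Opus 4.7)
The plan is to exploit the u.i.d.~approximation at $a_0$ (Proposition \ref{prop1.1.1}) together with two structural observations specific to the horizontal target $\HH$: \emph{(i)} the intrinsic differential $d\phi_a(w)$ depends only on the horizontal components $(w_{k+1},\dots,w_m)$ of $w\in\W$; and \emph{(ii)} for $\phi(a)=(h,0)\in\HH$ and $w=(x,y)\in\W$, a direct computation using the skew-symmetry of $\mathcal B$ gives $\phi(a)^{-1}w\,\phi(a)=(x,\,y+\langle\mathcal Bh,x\rangle)$, so conjugation preserves the horizontal coordinates of $w$ while only shifting the vertical ones.

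For (1), I first do the local step. Given $\eta>0$, u.i.d.~at $a_0$ yields $r>0$ such that, for $\|a_0^{-1}a\|<r$ and $\|a^{-1}b\|<r$,
\[
\|\phi(a)^{-1}\phi(b)\|\leq \|d\phi_{a_0}(a^{-1}b)\|+\eta\,\|\phi(a)^{-1}a^{-1}b\,\phi(a)\|.
\]
By \emph{(i)}--\emph{(ii)}, $\|d\phi_{a_0}(a^{-1}b)\|\leq|\nabla^\phi\phi(a_0)|\,|(a^{-1}b)_{\mathrm{horiz}}|\leq|\nabla^\phi\phi(a_0)|\,\|\phi(a)^{-1}a^{-1}b\,\phi(a)\|$, so $\phi$ is intrinsic Lipschitz on a neighborhood of $a_0$ with constant $|\nabla^\phi\phi(a_0)|+\eta$. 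To upgrade to a uniform constant on $\mathcal O'\Subset\mathcal O$, I would cover $\overline{\mathcal O'}$ by finitely many such neighborhoods and invoke the Lebesgue number lemma: ``close'' pairs $a,b$ (lying in a common ball) are controlled by the local estimate; ``far'' pairs ($\|a^{-1}b\|\geq\delta$) are handled because $\phi$ is bounded on $\mathcal O'$ (u.i.d. implies continuity, by the Remark after Definition \ref{d3.2.1}) while $\|\phi(a)^{-1}a^{-1}b\,\phi(a)\|$ is bounded below by a positive constant, using that conjugation by an element of a bounded set is bi-Lipschitz on bounded subsets of $\G$.

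For (2), subtracting the u.i.d.~estimate at $a_0$ from the intrinsic differentiability estimate at $a$ (available because u.i.d.~is assumed on all of $\mathcal O$) gives
\[
\|(d\phi_{a_0}-d\phi_a)(w)\|\leq (\eta+\eta'(\|w\|))\,\|\phi(a)^{-1}w\,\phi(a)\|,
\]
with $\eta'(\rho)\to 0$ as $\rho\to 0$. The main obstacle is a scaling mismatch: for a generic small horizontal test vector $w=(\xi,0)$ the denominator scales like $|\xi|^{1/2}$ (the vertical correction arising from the $\varepsilon|y|^{1/2}$ term in \eqref{normadinfty} dominates after conjugation), whereas the numerator is linear in $|\xi|$, so the estimate degenerates in the limit $\xi\to 0$ and yields no information on $M:=\nabla^\phi\phi(a_0)-\nabla^\phi\phi(a)$. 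I would circumvent this by testing along the ``balanced'' direction $w=(\xi,-\langle\mathcal B\phi(a),\xi\rangle)\in\W$, chosen so that by \emph{(ii)} the conjugate is $\phi(a)^{-1}w\,\phi(a)=(\xi,0)$ of norm exactly $|\xi|$. For this $w$ the inequality reduces to
\[
\frac{|M\xi|}{|\xi|}\leq \eta+\eta'(\|w\|),
\]
and picking $\xi=t\,e_j$ and letting $t\to 0$ yields $|M\,e_j|\leq \eta$ for every coordinate direction, whence $|M|\leq C\,\eta$. Since $\eta$ can be made arbitrarily small by further shrinking $r$, this proves continuity of $a\mapsto\nabla^\phi\phi(a)$, hence of $a\mapsto d\phi_a$ via the explicit formula in Proposition \ref{prop1.1.1}.
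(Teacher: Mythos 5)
The paper does not actually prove this proposition: it is imported verbatim from \cite{biblioDDD} (Proposition 3.6 there), so there is no in-paper argument to compare against. Judged on its own merits, your proof is correct and follows the route one would expect from the cited literature: part (1) comes from reading the uniform estimate of Proposition \ref{prop1.1.1} together with the two structural facts that $d\phi_{a_0}$ only sees the horizontal components of the increment and that conjugation by $\phi(a)\in\HH$ fixes those components (your computation $\phi(a)^{-1}(x,y)\phi(a)=(x,\,y+\langle\mathcal B h,x\rangle)$ is right, using skew-symmetry), followed by a finite cover plus a separate bound for well-separated pairs; part (2) correctly identifies the $|\xi|^{1/2}$ degeneration of the naive estimate and defeats it by testing along the balanced increment $w=(\xi,-\langle\mathcal B\phi(a),\xi\rangle)$, whose conjugate is the purely horizontal vector $(\xi,0)$. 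Letting $t\to0$ along $\xi=te_j$ then gives $|(\nabla^\phi\phi(a_0)-\nabla^\phi\phi(a))e_j|\le\eta$ uniformly for $\|a_0^{-1}a\|<r$, which is exactly continuity of the intrinsic gradient.

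One justification should be repaired. Conjugation $w\mapsto p^{-1}wp$ is \emph{not} bi-Lipschitz near the identity in a step-$2$ group: the vertical shift $y\mapsto y+\langle\mathcal B h,x\rangle$ contributes a term of order $|x|^{1/2}$ to the homogeneous norm \eqref{normadinfty}, so the map is only $1/2$-H\"older at small scales. What you actually need for the ``far pairs'' in part (1) is only the positive lower bound $\inf\{\|\phi(a)^{-1}a^{-1}b\,\phi(a)\|\,:\,a,b\in\overline{\mathcal O'},\ \|a^{-1}b\|\ge\delta\}>0$, and this does hold: if the conjugate had norm $<c$ then $|x|<c$ and $|y|<c^2/\varepsilon^2+C\,(\sup_{\overline{\mathcal O'}}|\phi|)\,c$, which forces $\|a^{-1}b\|<\delta$ once $c$ is small enough; alternatively, a compactness argument on $\overline{\mathcal O'}\times\overline{\mathcal O'}$ using the continuity of $\phi$ and the fact that the conjugate vanishes only when $a=b$ gives the same conclusion. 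With that substitution the argument is complete.
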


Finally, we recall Rademacher type theorem proved in \cite{biblio21}  in codimension 1 for a large class of Carnot groups which includes Carnot group of step 2. In this paper, we need it  for step 2 and so we does not introduce this specific class.  Moreover, recently, Vittone  generalized this result in low codimension when $\G= \HH^n$ (see  \cite{biblioVIT1}).
\begin{theo}\label{Theorem 4.3.5fms}
Let $\HH$ and $\W$ be complementary subgroups of a Carnot group $\G$ of step 2  with $\HH$ one dimensional and let $\hat \phi :\mathcal O \subset \W \to \HH $ be an intrinsic Lipschitz function.  
Then $\hat \phi$ is intrinsic differentiable $ \mathcal{L}^{m+n-1}$-a.e. in $\mathcal O$. 
\end{theo}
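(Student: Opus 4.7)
Since $\HH$ is one-dimensional and (being the complement of a subgroup $\W$ of corank one containing the whole second layer) horizontal, identify $\HH$ with $\R$ via a unit vector $e$ and write $\hat\phi(a) = t(a)\, e$. The plan is to realize $\graph{\hat\phi}$ as the reduced boundary of a set of locally finite $\G$-perimeter and then combine the structure theorem (Theorem \ref{structure theorem BV}) with an intrinsic blow-up analysis; at almost every point the blow-up is a vertical halfspace, and the intrinsic Lipschitz bound then lets one promote this measure-theoretic information to the pointwise differentiability of Proposition \ref{prop1.1.1}.

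\textbf{Main steps.} First I would introduce the intrinsic subgraph
\[
E := \{ a \cdot (te) \,:\, a\in \mathcal O,\ t < t(a)\}\subset \G,
\]
and use the intrinsic $C_L$-Lipschitz cone property (Remark~\ref{lip0}) to exhibit, at every point of $\partial E$, an open intrinsic cone contained entirely in $E$ and another contained in its complement; this cone condition yields local uniform BV-estimates and shows that $E$ is a $\G$-Caccioppoli set with $|\partial E|_\G$ concentrated on $\graph{\hat\phi}$. Second, I would push $|\partial E|_\G$ forward to $\mathcal O \subset \W$ through $\Phi^{-1}$ (recall \eqref{Phi}) and check, via the intrinsic area-type formula available in codimension one, that this pushforward is absolutely continuous with respect to $\mathcal L^{m+n-1}$ with bounded density. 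Third, Theorem \ref{structure theorem BV} supplies the generalized horizontal normal $\nu_E$ at $|\partial E|_\G$-a.e.\ point of $\partial E$, hence at $\mathcal L^{m+n-1}$-a.e.\ $a\in\mathcal O$.

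Fourth, at a typical point $p_0 = a_0\hat\phi(a_0)$ where $\nu_E$ is defined and is a Lebesgue point of $\nu_E$ with respect to $|\partial E|_\G$, I would perform the intrinsic rescalings $\delta_{1/r}(\tau_{p_0^{-1}} E)$ and invoke a De~Giorgi-type blow-up result in Carnot groups (the step-$2$ version of which is available in \cite{biblioGARN}): these rescalings converge in $L^1_{\text{loc}}$ to a vertical halfspace $H$ whose horizontal normal is $\nu_E(p_0)$. The boundary of $H$ is itself the intrinsic graph of an intrinsic linear map $\ell:\W\to\HH$, which is the candidate for $d\hat\phi_{a_0}$. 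Finally, I would translate this set-theoretic blow-up into the pointwise estimate of Proposition \ref{prop1.1.1}: the intrinsic Lipschitz cone bound forces any point of the rescaled graph to lie inside an intrinsic tube of definite width around its $\W$-projection, so $L^1_{\text{loc}}$ convergence of $\chi_{\delta_{1/r}(\tau_{p_0^{-1}}E)}$ to $\chi_H$ is upgraded to uniform convergence of the rescaled graph maps $\hat\phi_{r}$ to $\ell$, which is exactly intrinsic differentiability of $\hat\phi$ at $a_0$ with differential $\ell$.

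\textbf{Main obstacle.} The delicate step is the last one: converting the $L^1$-blow-up of the Caccioppoli set into the pointwise differentiability formula, which is defined relative to the $\phi$-dependent quasi-distance $\|\hat\phi(a_0)^{-1}a_0^{-1}b\,\hat\phi(a_0)\|$. Two ingredients are needed here, and both use the intrinsic Lipschitz hypothesis in an essential way: the cone condition rules out narrow "spikes'' in the graph that would be invisible to the perimeter measure, and it provides the comparability between the ambient distance and the quasi-distance on the graph noted in Remark~\ref{lip0}. Without the intrinsic Lipschitz bound the argument degenerates, which is consistent with the failure of Rademacher-type theorems for merely continuous intrinsic graphs in higher codimension \cite{biblioJNGV}.
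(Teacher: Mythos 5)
The paper does not prove this theorem: it is imported verbatim from Franchi--Marchi--Serapioni \cite{biblio21}, where it is established for a class of groups containing all step-2 Carnot groups in codimension one. Your strategy --- the subgraph $E$ as a $\G$-Caccioppoli set via the cone property, the structure theorem for the perimeter measure, a De Giorgi-type blow-up to a vertical halfspace at a.e.\ point, and the upgrade of $L^1_{\mathrm{loc}}$ convergence of $\chi_{\delta_{1/r}(\tau_{p_0^{-1}}E)}$ to uniform convergence of the rescaled graph maps via the intrinsic Lipschitz cone --- is essentially the proof given in that reference, so the approach is sound. Two points deserve care. First, to transfer ``$|\partial E|_\G$-a.e.\ on $\graph{\hat\phi}$'' into ``$\mathcal{L}^{m+n-1}$-a.e.\ on $\mathcal O$'' you need the density of $(\Phi^{-1})_{\#}\bigl(|\partial E|_\G\res\graph{\hat\phi}\bigr)$ with respect to $\mathcal{L}^{m+n-1}$ to be bounded \emph{below} by a positive constant, not only above as you state; the codimension-one intrinsic area formula for intrinsic Lipschitz graphs gives both bounds, with constants controlled by the Lipschitz constant, but the lower bound is the one your argument actually uses. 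Second, the blow-up of a finite-perimeter set to a vertical halfspace at $|\partial E|_\G$-a.e.\ point is the structure theorem of Franchi--Serapioni--Serra Cassano for step-2 groups rather than a consequence of \cite{biblioGARN}; the availability of that blow-up theorem is precisely what restricts the statement to step 2 (and to the larger class treated in \cite{biblio21}) rather than to arbitrary Carnot groups.
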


\section{The intrinsic gradient}

 
Let $\G = (\R ^{m+n}, \cdot  , \delta_\lambda )$ be a Carnot group of step 2 as Section \ref{Carnotinizio}  and $\W$, $\V$ be complementary subgroups in $\G$ with $\V$ horizontal and one dimensional.
 
%

\begin{rem}\label{remIMPORT} To keep notations simpler,
through all this section we assume, without loss of ge\-ne\-ra\-li\-ty, that  the complementary subgroups $\W$, $\V$ are
\begin{equation}\label{5.2.0}
\begin{aligned}
\V & :=\{ (x_1,0\dots , 0) \,:\, x_1\in \R \},\\
 \W & :=\{ (0,x_2,\dots , x_{m}, y_{1},\dots , y_{n}) \,:\, x_i,y_j\in \R \mbox{ for }i=2,\dots, m,j=1,\dots, n \}.
\end{aligned}
\end{equation}
This amounts simply to a linear change of variables  in the first layer of the algebra $\mathfrak g$ (see Remark \ref{remChange} where $\mathcal{M}_2 = \mathcal{I}_n$ is the $n\times n$ identity matrix).

\end{rem}

When $\V$ and $\W$ are defined as in \eqref{5.2.0} there is a natural inclusion $i: \R^{m+n-1}\to \W$ such that, for all $(x_2,\dots x_m,y_1,\dots,y_n)\in \R^{m+n-1}$,  
\[
i((x_2,\dots x_m,y_1,\dots,y_n)):=(0, x_2,\dots x_m,y_1,\dots,y_n)\in \W.
\]
If $\mathcal O$ and $\phi$ are respectively an open set in $\R^{m+n-1}$ and a function $\phi:\mathcal O\to \R$ 
we denote  $\hat {\mathcal O}:= i(\mathcal O)\subset \W$ and $\hat \phi :\hat{\mathcal O}\to \V$ the function defined as
\begin{equation}\label{phipsi}
\hat\phi (i(a)) :=(\phi(a), 0,\dots ,0) 
\end{equation}
for all $a\in \mathcal O$.

From Theorem \ref{teo4.1} \eqref{teo4.1.1N}, if $\hat \phi :\hat{\mathcal O} \subset \W \to \V$ is such that $\graph {\hat \phi} $ is 
locally a non critical level set of  $f\in C^1_\G (\G, \R)$ with $X_1f\neq 0$, then $\hat\phi$ is u.i.d. in $\hat{\mathcal O}$ and  the following representation of the  intrinsic gradient $\nabla^{\hat \phi} \hat\phi$ holds
\begin{equation}\label{DPHI2}
\nabla^{\hat \phi} \hat\phi (p)=-\left (\frac{X_2f}{X_1f} ,\dots , \frac{X_{m}f}{X_1f}\right)(p\cdot \hat\phi (p))
\end{equation}
for all $p\in \hat{\mathcal O}$. 

In Proposition \ref{prop2.22} we prove  a different  explicit expression of  $\nabla^{\hat \phi} \hat\phi $, not involving  $f$, but only derivatives of the real valued function  $\phi$.  

\begin{prop}[Proposition 5.4, \cite{biblioDDD}]\label{prop2.22}
Let $\G = (\R^{m+n}, \cdot  , \delta_\lambda )$ be a Carnot group of step $2$ and $\V$, $\W$ the complementary subgroups defined in \eqref{5.2.0}. Let $\mcal U$ be  open in $\G$,  $f\in C^1_\G(\mcal U, \R )$ with $X_1f>0$ and assume that $S:=\{ p\in \mcal U : f(p)=0\}$ is non empty. Then there are $\hat {\mcal O}$ open in $\W$ and  $\hat{\phi} :\hat{\mcal O}  \to \V$ such that $S=\graph {\hat \phi} $.  Moreover $\hat \phi$ is u.i.d. in $\hat{\mcal O}$ and the intrinsic gradient $\nabla^{\hat \phi} \hat\phi $ is the vector 
\[
\nabla^{\hat \phi} \hat\phi (i(a))=\left(D_2^{\phi} {\phi}(a),\dots, D_m^{\phi} {\phi}(a) \right)\]
for all $a\in \mathcal O$ where, for $j=2,\dots, m$,
\begin{equation}\label{pde5}
D_j^{\phi} {\phi} = X_j \phi+\phi  \sum_ {s=1 }^{n} b_ {j1 }^{(s)} Y_s\phi
\end{equation}
in distributional sense in $\mathcal O$, where $X_j, Y_s$ are defined as \eqref{vectorfields}.
\end{prop}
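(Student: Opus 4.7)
The plan is to reduce to the classical $C^1$ case by mollification and then pass to the distributional limit. From Theorem~\ref{teo4.1} (formula \eqref{DPHI2}) we already have
\begin{equation*}
D_j^\phi\phi(a) = -\frac{X_jf}{X_1f}\bigl(\Psi(i(a))\bigr),\qquad j=2,\dots,m,
\end{equation*}
as a continuous function on $\mathcal O$, where $\Psi(i(a)):=i(a)\cdot\hat\phi(i(a))$ is the graph map. A direct application of the group law \eqref{5.1.0} and the skew-symmetry $b_{j\ell}^{(s)}=-b_{\ell j}^{(s)}$ yields the explicit form
\begin{equation*}
\Psi(i(a)) = \Bigl(\phi(a),\, a_2,\dots,a_m,\, y_1+\tfrac{\phi(a)}{2}\sum_{\ell=2}^m b_{\ell 1}^{(1)}a_\ell,\,\dots,\, y_n+\tfrac{\phi(a)}{2}\sum_{\ell=2}^m b_{\ell 1}^{(n)}a_\ell\Bigr),
\end{equation*}
so evaluating the horizontal vector fields \eqref{vectorfields} on the graph reduces \eqref{pde5} to an algebraic identity in the smooth case.

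First I would handle the classical case $f\in C^1(\mcal U,\R)$. Introducing $F:\R\times\mathcal O\to\R$ by replacing $\phi(a)$ with a free variable $t$ in the coordinate expression of $f\circ\Psi\circ i$, a direct computation gives $\partial_tF|_{t=\phi(a)}=X_1f|_\Psi>0$, so the Euclidean implicit function theorem yields $\phi\in C^1(\mathcal O)$ solving $F(\phi(a),a)\equiv0$. Differentiating this identity with respect to $y_s$ produces
\begin{equation*}
X_1f\bigl|_\Psi\cdot Y_s\phi = -Y_sf\bigl|_\Psi,\qquad s=1,\dots,n,
\end{equation*}
while differentiating with respect to $a_j$ produces an analogous relation. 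Substituting the expansion of $X_jf$ on the graph coming from \eqref{vectorfields} and using the previous identity to eliminate $Y_sf$ in favour of $Y_s\phi$, one obtains the pointwise version of \eqref{pde5}. The key algebraic feature is that the skew-symmetry links the $b_{1\ell}^{(s)}$-terms coming from $X_1f|_\Psi$ with the $b_{j1}^{(s)}$-terms coming from the $\phi$-dependence of the $y$-components of $\Psi$.

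For a general $f\in C^1_\G$ the function $\phi$ is u.i.d.\ and therefore continuous, but not necessarily classically differentiable, so \eqref{pde5} must be read in $\mathcal D'(\mathcal O)$; the product $\phi\,Y_s\phi$ is interpreted through $\tfrac12 Y_s(\phi^2)$, which is well defined since $\phi\in C^0(\mathcal O)\subset L^\infty_{\rm loc}(\mathcal O)$. To use the smooth case I would regularize $f$ via a \emph{group} convolution $f_\varepsilon := f*_\G\rho_\varepsilon$ with a smooth compactly supported mollifier; left-invariance of $X_k$ gives $X_kf_\varepsilon=(X_kf)*_\G\rho_\varepsilon\to X_kf$ uniformly on compact subsets of $\mcal U$, so $X_1f_\varepsilon>0$ on any fixed relatively compact open subset of $\mcal U$ for $\varepsilon$ small. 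The smooth implicit function theorem applied to $f_\varepsilon$ then yields $\phi_\varepsilon\in C^1$ on a common subdomain with $\phi_\varepsilon\to\phi$ locally uniformly, and the previous step provides the pointwise identity
\begin{equation*}
D_j^{\phi_\varepsilon}\phi_\varepsilon = X_j\phi_\varepsilon+\tfrac12\sum_{s=1}^n b_{j1}^{(s)}Y_s(\phi_\varepsilon^2).
\end{equation*}
Passing to the limit in $\mathcal D'(\mathcal O)$ closes the argument: the left-hand side converges uniformly on compacts to $D_j^\phi\phi$ by continuity of $-X_jf/X_1f$ together with $\Psi_\varepsilon\to\Psi$, while the right-hand side converges distributionally because $\phi_\varepsilon\to\phi$ and $\phi_\varepsilon^2\to\phi^2$ in $L^1_{\rm loc}$.

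The main obstacle, in my view, is the regularization step: one must show that the group mollifications $f_\varepsilon$ produce implicit function graphs $\phi_\varepsilon$ defined on a common subdomain of $\mathcal O$ and converging locally uniformly to $\phi$. This can be obtained by combining the uniform convergences $f_\varepsilon\to f$ and $X_1f_\varepsilon\to X_1f$, the positivity of $X_1f$, and uniqueness of the implicit function solution; once in place, the algebra of the smooth case and the distributional passage are routine.
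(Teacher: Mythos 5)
Your proposal is correct and follows what is essentially the proof of this statement in the cited source (the paper itself imports Proposition \ref{prop2.22} from \cite{biblioDDD} without reproving it): implicit function theorem for a smoothed $f_\varepsilon$, the pointwise chain-rule computation on the graph using skew-symmetry of the $\mathcal B^{(s)}$, rewriting $\phi\,Y_s\phi$ as $\tfrac12 Y_s(\phi^2)$, and passage to the limit in $\mathcal D'(\mathcal O)$ using locally uniform convergence of $\phi_\varepsilon$ and of $-X_jf_\varepsilon/X_1f_\varepsilon\circ\Psi_\varepsilon$. The one technical point to fix is the identity $X_kf_\varepsilon=(X_kf)*_\G\rho_\varepsilon$: a left-invariant vector field commutes with group convolution only on one side, so you must mollify on that side (or invoke the standard Friedrichs-type lemma of Franchi--Serapioni--Serra Cassano guaranteeing $X_kf_\varepsilon\to X_kf$ locally uniformly when $X_kf$ is continuous); this is a known, routine repair and does not affect the argument.
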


From Proposition \ref{prop2.22},   if $\graph{\hat \phi}$ is a $\G$-regular hypersurface, the intrinsic gradient of $\phi $ 
takes the explicit form given in \eqref{pde5}. 
This motivates the definitions of the operators \emph{intrinsic horizontal  gradient} and  \emph{intrinsic derivatives}.
\begin{defi}\label{defintder}
Let $\mcal O $ be  open in $\R^{m+n-1}$,  $\phi :\mcal O \to \R$ be continuous in $\mathcal O$. The \emph{intrinsic derivatives}  $D^\phi _j $, for $j=2,\dots ,m$, are the differential operators with continuous coefficients
\begin{equation*}
\begin{split}
D^\phi _j  &:=\partial _{x_j }+\sum_ {s=1 }^{n} \left(\phi b_ {j 1}^{(s)} +\frac{1}{2 } \sum_ {l=2 }^{m}  x_l  b_ {jl}^{(s)} \right) \partial _{y_s } \\
&= {X_j}_{\vert \W}+ \phi\,\sum_ {s=1 }^{n}b_ {j 1}^{(s)} {Y_s}_{\vert \W}
\end{split}
\end{equation*}
where, in the second line with abuse of notation, we denote with the same symbols $X_j$ and $Y_s$ the vector fields acting on functions defined in $\mathcal O$.

If  $\hat\phi:=(\phi, 0,\dots, 0):\hat{\mathcal O}\to \V$, we denote
\emph{intrinsic horizontal gradient} $\nabla^{\hat\phi}$ the differential operator 
\[
\nabla^{\hat\phi} :=(D^\phi _2,\dots, D^\phi _m).
\]
\end{defi}

\begin{defi} 
$\mathbf{(Distributional \, \, solution)}$. Let $\mcal O \subset  \R^{m+n-1}$ be open and $w=\left(w_2,\dots,w_m\right)\in \mathcal{L}^\infty _{loc} (\mathcal O , \R^{m-1})$ . We say that $\phi\in C(\mcal O)$ is a \emph{distributional solution} in $\mcal O$ of the non-linear first order PDEs' system
\begin{equation}\label{sistema}
\left(D^{\phi}_2\phi,\dots, D^{\phi}_m\phi\right)= w \,\, \mbox{ in } \mathcal O, 
\end{equation}
if for every $\zeta \in C^1 _c (\mathcal O)$
\begin{equation*}\label{distrib}
 \int _\mathcal O \phi \left( \,X_j \zeta +\phi  \sum_{s=1}^n b_ {j1 }^{(s)} Y_s \zeta \right)  \, d\mathcal{L}^{m+n-1} = -\int_\mathcal O w_j \zeta \, d \mathcal L^{m+n-1}, \quad \mbox{for } j=2,\dots , m 
\end{equation*}
\end{defi} 

\begin{rem}
If the vector fields $D^{\phi}_2,\dots, D^{\phi}_m $ are smooth we know that it is possible to connect each couple of points $a$ and $b$ in $\mcal O$ with a piecewise continuous integral curve of horizontal vector fields. This means that there is an absolutely continuous curve $\gamma _h :[t_1,t_2]\to \mathcal O  $ from $a$ to $b$ such that $-\infty <t_1<t_2<+\infty $ and
\begin{equation*}
\dot \gamma _h (t) = \sum_{j=2}^{m} h_j(t) \, D_j^\phi (\gamma _h (t)) \hspace{0,5 cm } \mbox{a.e.}  \, \, t\in (t_1,t_2)
\end{equation*}
with $h=(h_2,\dots, h_m) :[t_1,t_2]\to \R^{m-1}$ a piecewise continuous function. In our case the vector fields $D_j^\phi $ are only continuous, and consequently it isn't sure the existence of $\gamma _h$.
\end{rem}

In \cite{biblioDDD2}, we show the following relationship between intrinsic Lipschitz function and distributional solution of \eqref{sistema} when the datum $w$ is a measurable map (see \cite{biblioADDDLD, biblioADDD} for the case of continuous datum): 
\begin{theo}\label{lemma5.4bcsc} 
Let $\G = (\R^{m+n}, \cdot  , \delta_\lambda )$ be a Carnot group of step $2$ and $\V$, $\W$ the complementary subgroups defined in \eqref{5.2.0}. Let $\hat \phi :\hat{\mathcal O} \to \V $ be a continuous map  where $\hat{\mathcal O} $ is open in $\W$ and $\phi :\mathcal O  \to \R$ is the map associated to $\hat \phi$ as in $\eqref{phipsi}$. If $\phi$ is locally $1/2 $-H\"older continuous along the vertical components, then the following conditions are equivalent:
\begin{enumerate}
\item $\hat \phi $ is locally intrinsic Lipschitz function; 
\item $\phi$ is a continuous distributional solution of $(D^{\phi}_2\phi,\dots, D^{\phi}_m\phi )  =w$ in $\mathcal O $ with\\ $w\in \mathcal{L}^\infty _{loc}(\mathcal O ,\R^{m-1}).$
\end{enumerate}

\end{theo}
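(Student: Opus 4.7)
The plan is to prove the two implications separately; (1)$\Rightarrow$(2) is comparatively direct via Rademacher, while (2)$\Rightarrow$(1) requires a DiPerna--Lions-type mollification argument carefully adapted to the presence of several vertical coordinates in step 2.

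For (1)$\Rightarrow$(2), I would apply the Rademacher-type Theorem~\ref{Theorem 4.3.5fms} to conclude that the locally intrinsic Lipschitz map $\hat\phi$ is intrinsic differentiable $\mathcal{L}^{m+n-1}$-a.e.\ on $\hat{\mathcal O}$, and then invoke Proposition~\ref{prop2.22} to identify the intrinsic gradient with $(D_2^\phi\phi,\ldots,D_m^\phi\phi)$ at every point of intrinsic differentiability. Setting $w$ to be this a.e.-defined gradient, the intrinsic Lipschitz constant yields $w\in \mathcal{L}^\infty_{loc}(\mathcal O,\R^{m-1})$. Promoting the a.e.\ identity $D_j^\phi\phi = w_j$ to the distributional form reduces, after mollifying $\phi$ by Euclidean kernels $\phi_\varepsilon:=\phi*\rho_\varepsilon$, to passing to the limit in the two bilinear pairings $\int \phi_\varepsilon\, X_j\zeta$ and $\int \phi_\varepsilon^2 \sum_s b_{j1}^{(s)}\,Y_s\zeta$ against a fixed $\zeta\in C^1_c(\mathcal O)$, which follows from uniform convergence of $\phi_\varepsilon\to \phi$ on compact sets together with dominated convergence against the datum $w$.

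For (2)$\Rightarrow$(1), I would adapt the characteristic strategy of \cite{biblio17} to the step-2 framework. Letting $\phi_\varepsilon := \phi*\rho_\varepsilon$, a direct computation yields
\[
D_j^{\phi_\varepsilon}\phi_\varepsilon \;=\; w_j * \rho_\varepsilon \;+\; R_j^\varepsilon,
\]
where $R_j^\varepsilon$ collects the commutator between convolution and the nonlinear field $D_j^\phi$. Using the vertical $1/2$-H\"older hypothesis on $\phi$ to estimate the crucial piece $\sum_s b_{j1}^{(s)}(\phi-\phi_\varepsilon)\,Y_s\phi_\varepsilon$ together with the usual DiPerna--Lions commutator estimate for the linear part $X_j$, one obtains $R_j^\varepsilon\to 0$ in $\mathcal{L}^1_{loc}$ and hence a uniform bound on $\|D_j^{\phi_\varepsilon}\phi_\varepsilon\|_{\mathcal{L}^\infty_{loc}}$. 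For each smooth $\phi_\varepsilon$ the fields $D_j^{\phi_\varepsilon}$ possess classical flows, so two points $a,b$ in a relatively compact subset of $\mathcal O$ can be joined by a piecewise concatenation of characteristics whose total length is comparable to the horizontal part of $\mathbf{P}_\W(p^{-1}q)$, with $p=a\hat\phi(a)$ and $q=b\hat\phi(b)$; integrating along this curve produces
\[
|\phi_\varepsilon(b)-\phi_\varepsilon(a)| \;\le\; C\,\|w\|_{\mathcal{L}^\infty_{loc}}\,\|\mathbf{P}_\W^{\mathrm{horiz}}(p^{-1}q)\| \;+\; o(1).
\]
The remaining vertical piece $\|\mathbf{P}_\W^{\mathrm{vert}}(p^{-1}q)\|$ is controlled directly by the vertical $1/2$-H\"older assumption, whose $|y|^{1/2}$ scaling matches the homogeneous norm on the center; summing the two estimates yields intrinsic Lipschitz continuity of $\hat\phi_\varepsilon$ uniformly in $\varepsilon$, and letting $\varepsilon\to 0$ via uniform convergence closes the implication.

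The main obstacle I expect is the commutator estimate for $R_j^\varepsilon$. In the Heisenberg setting of \cite{biblio17} there is only one vertical direction, so that the renormalization for $Y$ is essentially one-dimensional; here the coefficients $b_{j1}^{(s)}$ couple several vertical derivatives, and the naive bound on $(\phi-\phi_\varepsilon)\,Y_s\phi_\varepsilon$ from $\|\phi-\phi_\varepsilon\|_{\mathcal{L}^\infty}\sim \varepsilon^{1/2}$ and $\|Y_s\phi_\varepsilon\|_{\mathcal{L}^\infty}\sim \varepsilon^{-1}$ is not uniformly summable. To close the estimate one must exploit both the vertical H\"older hypothesis on $\phi$ and the skew-symmetry of the matrices $\mathcal{B}^{(s)}$ encoded in \eqref{commutatoripasso2}, which produces the cancellation needed to push $R_j^\varepsilon$ to zero in $\mathcal{L}^1_{loc}$. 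This is precisely the step at which the step-2 commutator structure intervenes and distinguishes the proof from its Heisenberg counterpart.
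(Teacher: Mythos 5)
First, a framing remark: this paper does not actually prove Theorem \ref{lemma5.4bcsc}; it is quoted from \cite{biblioDDD2}, and the surrounding material (in particular Lemma \ref{lemma5.1bcsc} and the discussion of \cite{biblio17}) indicates the intended proof scheme. Your overall architecture points in the right direction, but both implications as you write them contain genuine gaps.

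For (1)$\Rightarrow$(2): Theorem \ref{Theorem 4.3.5fms} gives you a.e.\ intrinsic differentiability and hence a bounded candidate $w$, but it does not give the distributional identity. Passing to the limit in $\int\phi_\varepsilon X_j\zeta$ and $\int\phi_\varepsilon^2\sum_s b_{j1}^{(s)}Y_s\zeta$ with $\phi_\varepsilon=\phi*\rho_\varepsilon$ only shows that the weak pairings for $\phi_\varepsilon$ converge to those for $\phi$; it says nothing about whether the limit equals $-\int w_j\zeta$. The pointwise intrinsic gradient of a merely H\"older function is defined via the blow-up of the shifted function, not via distributional derivatives of $\phi$ and $\phi^2$, and identifying the two objects is precisely the content that must be proved; the known arguments go either through the locally finite $\G$-perimeter of the subgraph of an intrinsic Lipschitz function or through approximation by maps that genuinely satisfy the PDE with controlled data. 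Note also that the paper's logical order is the reverse of yours: Corollary \ref{coro5.7bcsc} \emph{deduces} the a.e.\ identity $\nabla^{\hat\phi}\hat\phi=w$ from the distributional identity, it does not start from it.

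For (2)$\Rightarrow$(1): you yourself identify the commutator estimate $R_j^\varepsilon\to0$ as the crux and do not close it, and the proposed rescue via skew-symmetry of the matrices $\mathcal{B}^{(s)}$ cannot work. The dangerous term is $\sum_s b_{j1}^{(s)}(\phi-\phi_\varepsilon)Y_s\phi_\varepsilon$, in which each $b_{j1}^{(s)}$ is a single fixed scalar, so no cancellation among entries of a skew-symmetric matrix is available; and with only vertical $1/2$-H\"older regularity the product $\|\phi-\phi_\varepsilon\|_{\infty}\,\|Y_s\phi_\varepsilon\|_{\infty}$ is at best $O(1)$, not $o(1)$. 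The route that actually closes, and which Lemma \ref{lemma5.1bcsc} encodes, avoids mollifying the vector field altogether: integral curves of the continuous field $D_j^\phi$ exist by Peano, and the estimate $|\phi(\gamma_j(t))-\phi(\gamma_j(0))|\le C\|w_j\|_{\mathcal{L}^\infty}|t|$ along them is extracted directly from the weak formulation by testing against approximate characteristic functions of regions bounded by characteristics. Once that lemma is available, your final assembly (Lipschitz along characteristics for the horizontal displacement, vertical $1/2$-H\"older for the residual vertical displacement, which indeed matches the vertical part of $\phi(a)^{-1}a^{-1}b\phi(a)$) is the correct way to obtain the cone condition, but as written your proof rests on two estimates that are not established.
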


By Theorem \ref{Theorem 4.3.5fms} and Theorem \ref{lemma5.4bcsc}, it immediately follows
\begin{coroll}\label{coro5.7bcsc}
Let $\G = (\R^{m+n}, \cdot  , \delta_\lambda )$ be a Carnot group of step $2$ and $\V$, $\W$ the complementary subgroups defined in \eqref{5.2.0}. Let $\hat \phi :\hat{\mathcal O} \to \V $ be a continuous map  where $\hat{\mathcal O} $ is open in $\W$ and $\phi :\mathcal O  \to \R$ is the map associated to $\hat \phi$ as in $\eqref{phipsi}$. We also assume that  
\begin{enumerate}
\item $\phi$ is a continuous distributional solution of $(D^{\phi}_2\phi,\dots, D^{\phi}_m\phi )  =w$ in $\mathcal O $ with $w\in \mathcal{L}^\infty _{loc}(\mathcal O ,\R^{m-1})$
\item $\phi$ is locally $1/2 $-H\"older continuous along the vertical components 
\end{enumerate}
Then $\hat \phi$ is intrinsic differentiable $\mathcal{L}^{m+n-1}$-a.e. in $\hat{\mathcal O}$ and $(D^{\phi}_2\phi (a),\dots, D^{\phi}_m\phi (a))  =w (a)$ a.e. $a\in \mathcal O$.
\end{coroll}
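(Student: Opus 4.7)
The plan is to chain together the two results quoted immediately above. First, I would observe that hypotheses (1) and (2) of the corollary are precisely the inputs required in item (2) of Theorem \ref{lemma5.4bcsc}: hypothesis (2) is the local $1/2$-Hölder regularity along the vertical components that is built into the statement of that theorem, while hypothesis (1) is its item (2) verbatim. The implication $(2)\Rightarrow(1)$ of Theorem \ref{lemma5.4bcsc} therefore yields that $\hat\phi$ is a locally intrinsic Lipschitz map on $\hat{\mathcal O}$.

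Second, I would apply Theorem \ref{Theorem 4.3.5fms}. Since $\V$ is one-dimensional and $\hat\phi$ is intrinsic Lipschitz on every relatively compact open subset of $\hat{\mathcal O}$, the theorem yields intrinsic differentiability $\mathcal{L}^{m+n-1}$-a.e.\ on each such subset; a standard exhaustion of $\hat{\mathcal O}$ by relatively compact opens then produces an intrinsic differentiability set of full $\mathcal{L}^{m+n-1}$-measure in $\hat{\mathcal O}$.

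Third, at each intrinsic differentiability point $a$, Proposition \ref{prop1.1.1} provides the intrinsic horizontal gradient $\nabla^{\hat\phi}\hat\phi(a)\in\R^{m-1}$, whose components are, in view of Definition \ref{defintder} and the representation in Proposition \ref{prop2.22}, the pointwise values $D^\phi_j\phi(a)$ of the intrinsic derivatives. Because by hypothesis (1) the identity $D^\phi_j\phi=w_j$ holds distributionally with $w_j\in\mathcal{L}^\infty_{\mathrm{loc}}(\mathcal O)$, the Lebesgue differentiation theorem applied to the distributional identity forces the pointwise equality $D^\phi_j\phi(a)=w_j(a)$ for $\mathcal{L}^{m+n-1}$-a.e.\ $a\in\mathcal O$, yielding the displayed formula in the corollary.

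The only genuinely non-routine step is the third one: matching, at almost every intrinsic differentiability point, the components of $\nabla^{\hat\phi}\hat\phi(a)$ coming from the infinitesimal expansion of Proposition \ref{prop1.1.1} with the pointwise representatives of the distributional quantities $D^\phi_j\phi$ that appear in hypothesis (1). Once that identification is in place, the corollary is genuinely an immediate consequence of the two preceding theorems, which is why the author records it without a separate proof.
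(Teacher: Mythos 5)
Your proposal is correct and follows essentially the same route as the paper, which records this corollary as an immediate consequence of chaining Theorem \ref{lemma5.4bcsc} (to pass from the distributional solution to local intrinsic Lipschitz continuity) with the Rademacher-type Theorem \ref{Theorem 4.3.5fms}. The identification of the a.e.\ intrinsic gradient with the datum $w$ that you flag as the only non-routine point is likewise left implicit by the author, and your sketch of it is adequate.
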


We conclude the section given some explicit examples of the intrinsic gradient of a continuous map $\phi$.

\begin{exa}[Intrinsic gradient on corank 1 Carnot groups]\label{derivateintrinsecorank1}
	Let $\G$ be a corank 1 Carnot group as in Example \ref{exaCorank1} and let $\W$ and $\mathbb V$ be complementary subgroups of $\mathbb{G}$ as in \eqref{5.2.0}. Given a continuous map $\hat \phi\colon \mathcal O \subseteq \mathbb W\to \mathbb V$ as in $\eqref{phipsi}$, the \emph{intrinsic derivatives}  $D^\phi _j $ of $\phi$ are given by
	\begin{equation}\label{operatoriproiettatiinG}
\begin{aligned}
D_{j}^\phi & = \partial _{x_j}-  \left(  b_{j1} \phi  +\frac 1 2 \sum_{ \ell=2 }^{m  } x_\ell b_{j\ell}\right) \partial _{y} =  {X_j}_{\vert \mathcal O}-  b_{j1}  \phi  {Y}_{\vert \mathcal O},  \quad \mbox{ for } j=2,\dots, m.\\
\end{aligned}
\end{equation}
	Then, for each $j=2,\dots,m$, every integral curve $\gamma_j\colon I\to \mathbb R^{m}$ of $D^{\phi}_j$ has vertical component $y\colon I\to \mathbb R$  satisfying the following equation
	\begin{equation*}
	\begin{aligned}
	\dot y  (t)&=-b_{j1} \phi (x_2,\dots ,x_{j-1}, x_j+t,x_{j+1},\dots ,x_m, y(t))+\frac 1 2 \sum_{ \ell =2 }^{m  } x_\ell b_{j\ell }, \\
	\end{aligned}
	\end{equation*}
and the horizontal components of $\gamma_j(0)$ are $(x_2,\dots, x_m)$.
\end{exa}

\begin{exa}[Intrinsic gradient on free Carnot groups of step 2]\label{derivateintrinsecheperguppiliberi}
	Let $\mathbb{F}$ be a free Carnot group of step $2$ as in Example \ref{exaFree Step 2 Groups} and let $\W$ and $\mathbb V$ be complementary subgroups of $\mathbb{F}$ as in \eqref{5.2.0}. Given a continuous map $\hat \phi\colon \mathcal O \subseteq \mathbb W\to \mathbb V$ as in $\eqref{phipsi}$, the \emph{intrinsic derivatives}  $D^\phi _j $ of $\phi$ are given by
	\begin{equation}\label{operatoriproiettatiinF}
	\begin{aligned}
	D_j^\phi  = \partial _{j}-\phi \partial _{j1} + \frac 1 2 \sum_{  j<\ell\leq m } x_\ell\partial _{\ell j} -  \frac 1 2 \sum_{1<s <j} x_s \partial _{js}=  {X_j}_{| \mathcal O }-\phi  {Y_{j1}}_{|\mathcal O },  \quad &\mbox{ for } j=2,\dots, m.\\
	\end{aligned}
	\end{equation} 
	Then, for each $j=2,\dots,m$, every integral curve $\gamma_j\colon I\to \mathbb R^{m+n-1}$ of $D^{\phi}_j$ has vertical components $y= (y_{\ell s})_{1\leq s<\ell\leq m}\colon I\to \mathbb R^{\frac{m(m-1)}2}$  satisfying the following equations
	\begin{equation*}
	\begin{aligned}
	\dot y _{j1} (t)&=-\phi (x_2,\dots ,x_{j-1}, x_j+t,x_{j+1},\dots ,x_m, y(t)), \\
	\dot y _{\ell j} (t)&= \frac 12 x_\ell , \quad \quad  \qquad \mbox{ if }  j<\ell\leq m,\\
	\dot y _{js} (t)&=  -\frac 12 x_s, \quad  \,\qquad \mbox{ if }  1<s <j, \\
	\dot y_{\ell s} (t)&= 0,\quad \qquad  \quad \,\quad  \mbox{ otherwise,} 
	\end{aligned}
	\end{equation*}
where the horizontal components of $\gamma_j(0)$ are $(x_2,\dots, x_m)$.
\end{exa}

\begin{exa}[Intrinsic gradient on complexified Heisenberg group]
Let $\G$ be the complexified Heisenberg group $\HH^1_2$ defined as in Example \ref{exacomplexified Heisenberg group} and let $\V=\{(x_1,0,\dots,0)\,:\, x_1\in \R\}$ and $\W=\{(0,x_2\dots,x_6)\,:\, x_i\in \R, $ for $ i=2,\dots, 6\}$ be complementary subgroups of $\G$ as in \eqref{5.2.0}. Given a continuous map  $\hat \phi\colon \mathcal O \subseteq \mathbb W\to \mathbb V$  as in \eqref{phipsi}, then the \emph{intrinsic derivatives}  $D^\phi _j $ of $\phi$ are
\begin{equation*}
\begin{split}
D^\phi _2  &=\partial _{x_2 }+ \phi \partial _{y_1} +\frac{1}{2 } x_3 \partial _{y_2 }, \quad D^\phi _3 =\partial _{x_3 }+  \frac{1}{2 } x_4 \partial _{y_1 }  - \frac 1 2 x_2 \partial _{y_2 }, \quad  D^\phi _4 =\partial _{x_4 }+\phi \partial _{y_1 } -\frac{1}{2 }   x_3  \partial _{y_2 }. \\
\end{split}
\end{equation*}

Then for each $j=2,3,4$ and $(x_j, \hat x_j)\in \R^{3}$ fixed, every integral curve $\gamma_j\colon I\to \mathbb R^{m+n-1}$ of $D^{\phi}_j$ has vertical components  $(\gamma _{j1},  \gamma _{j2}):[-\delta , \delta ]\to \R^2$ satisfying the following equations
$$
\dot \gamma _{j1} (t)=\left\{
\begin{array}{l}
\phi (x_j+t,\hat x_j, \gamma _j(t)), \,\,\,\,  \mbox{ if } j=2\\
\\
\frac 1 2  x_4, \qquad \quad \qquad \qquad \mbox{ if } j=3\\
\\
-\frac 1 2 x_3, \qquad \qquad \qquad \, \mbox{ if } j=4\\
\end{array}
\right.\\
\qquad \quad \dot \gamma _{j2} (t)=\left\{
\begin{array}{l}
\frac 1 2  x_3, \qquad \quad \,\,\,\,\,\,\,\qquad \qquad  \mbox{ if } j=2\\
\\
-\frac 1 2 x_2, \quad \qquad \qquad \,\,\qquad \mbox{ if } j=3\\
\\
\phi (x_j+t,\hat x_j, \gamma _j(t)),   \quad \quad \mbox{ if } j=4\\
\end{array}
\right.\\
$$
where the horizontal components of $\gamma_j(0)$ are $(x_2, x_3,x_4)$.

 \end{exa}

\section{Existence of a Lagrangian type solution}\label{Lagrangian solution}
In this section we introduce and give some properties of so-called Lagrangian type solution of \eqref{sistema} (see Definition \ref{defiLagrangiana}).  This name follows  because it generalizes Lagrangian solution of \eqref{sistema} in the context of Heisenberg groups defined in \cite{biblio17}. Our setting is any Carnot groups of step 2 as in Section \ref{Carnotinizio}.  

 The idea of Lagrangian type solution is that the reduction on characteristics is not required on
any characteristic as happened in broad* solution, but on a suitable subset of characteristics. Exhibiting a suitable set of characteristics and so proving that this definition is not empty
 is the topic of  Section \ref{Existence Lagrangian type parameterization}. 

\subsection{Definition of Lagrangian type solution}
{\bf It is convenient to introduce the following notations}: for $j=2,\dots ,m $, $s=1,\dots , n$, 
 $\hat x_j=( x_2, \dots ,  x_{j-1} , x_{j+1}, \dots ,  x_{m}) \in \R^{m-2}$ and\\ $\hat y_s=( y_1, \dots ,  y_{s-1} , y_{s+1}, \dots , y_{n}) \in \R^{n-1}$ fixed, we denote by $$(t,\hat x_j) :=( x_2, \dots , x_{j-1}, t , x_{j+1}, \dots ,  x_{m} )$$ and $$ (y_{s} ,\hat y_s):= (y_1,\dots , y_{s-1},y_s , y_{s+1},\dots , y_n).$$

 Moreover, let $\mathcal O \subset \R^{m+n-1}$,\\ 
$ \bullet $  for given $\hat x _j \in \R^{m-2},$ we will denote $x_l$ instead of $(\hat x _j)_l$.\\
$ \bullet $  for given $\hat x _j \in \R^{m-2}$ and $y\in \R^{n}$, we will denote $\mathcal O_{\hat x _j, y}:= \{ t \in \R \, :\, (t,\hat x_j, y)\in \mathcal O \}$.\\
$ \bullet $  for given $x  \in \R^{m-1}$, we will denote $\mathcal O _{ x }:= \{ y\in \R^{n} \, :\, (x, y)\in \mathcal O \}$.\\
$ \bullet $ for given $\hat x _j \in \R^{m-2}$, we will denote $\mathcal O _{\hat x _j}:= \{ (t,y) \in \R \times \R^n \, :\, (t,\hat x_j, y)\in \mathcal O \}$.\\
$ \bullet $ for given $t\in \R$ and  $x  \in \R^{m-1},$ we will denote $\mathcal O _{y_s}:= \{ (t,\hat x_j,y_1,\dots , y_{s-1},y_{s+1},\dots , y_n) \in \R^{m-1} \times \R^{n-1}\, :\, (t,\hat x_j, y)\in \mathcal O \}$.\\
$ \bullet $  for given $t\in \R,$ we will denote $\mathcal O _{t}:= \{ (\hat x_j,y) \in \R^{m-2} \times \R^n \, :\, (t,\hat x_j, y)\in \mathcal O \}$.\\

We begin recalling that a set $\mathcal A \subset \R^{m+n}$  is universally measurable if it is measurable w.r.t. every Borel measure, (see \cite{Librocheservefinale}, Section 5.5). For example the open sets of $\R^{m+n}$ are  universally measurable.

Universally measurable sets constitute a $\sigma$-algebra, which includes analytic sets. A function $f:\R^{m+n} \to \R$ is said universally measurable if it is measurable w.r.t. this $\sigma$-algebra. In particular, it will be measurable w.r.t. any Borel measure. Notice that Borel counterimages of universally measurable sets are universally measurable. Then the composition $\phi \circ \psi $ of any universally measu\-ra\-ble function $\phi$ with a Borel function $\psi$ is universally measurable. This composition would be nasty with $\psi$ just Lebesgue measurable. Since restrictions of Borel functions on Borel sets are Borel, all the terms in the following definition are thus meaningful.

\begin{defi}\label{defiA}  
$\mathbf{Lagrangian \, \,  type \,\, parameterization}$. For $j=2,\dots , m$, a family of partial Lagrangian  type  parameterizations associated to a continuous function $\phi : \mathcal O  \to \R$ and to the system $(D^{\phi}_2\phi,\dots, D^{\phi}_m\phi )= w $ is a family of $(\tl {\mathcal O _j}, \chi ^1_j,\dots ,   \chi ^n_j)=(\tl {\mathcal O _j}, \chi _j)$  with $\tl {\mathcal O _j}\subset \R^{m+n-1}$ open sets and,  for $s=1,\dots , n$, $\chi  _{js}: \tl {\mathcal O _j} \to \R$ are Borel functions such that\\
$(L.1)$  the map $\Upsilon _j:\tl {\mathcal O _j}   \to \R^{m+n-1}$, $\Upsilon_j(x,y)=(x,  \chi _{j1} (x,y),\dots ,  \chi  _{jn}(x,y))$ is valued in $\mathcal O  $;\\
$(L.2)$ for each $x\in \R^{m-1}$ and for all $s=1,\dots , n$, the function $\tl {\mathcal O} _{j(x,\hat y_s)} \ni y_s \mapsto \chi  _{js}(x,y_{s} ,\hat y_s)$ is non-decreasing;\\
$(L.3)$ for each $x  \in \R^{m-1}$, $y\in \R^{n}$ and $(x_j-\delta,x_j+\delta ) \subset \tl {\mathcal O} _{j(\hat x _j,y)}$ the function $(-\delta , \delta  ) \ni t \mapsto \Upsilon_j (x_j+t, \hat x _j,y)$ is absolutely continuous and $\Upsilon_j $ is an integral curve of the vector field $D^\phi _j$, i.e.
\begin{equation}\label{vector}
\frac{\partial \Upsilon_j}{\partial t} (x_j+t, \hat x _j,y)= D^\phi _j (\Upsilon_j (x_j+t, \hat x _j,y)), \qquad t \in (-\delta , \delta ).
\end{equation}
 
 We call it a family of $($full$)$ Lagrangian type  parameterizations if $\chi _{js} : \tl {\mathcal O } _{j(x,\hat y_s)} \to \{ y_s \in \R\,:\\(x,y_s,\hat y_s) \in  \mathcal O \}  $ is onto. 
\end{defi} 

\begin{defi}\label{defiLagrangiana} 
$\mathbf{Lagrangian \, \,  type \,\, solution}$. A continuous function $\phi :\mathcal O  \to \R$ is a Lagrangian  type solution of 
\begin{equation}\label{solLagrangiana}
(D^{\phi}_2\phi,\dots, D^{\phi}_m\phi )= w \quad \mbox{in } \mathcal O, 
\end{equation}
  if there exists a family of Lagrangian  type parameterizations $(\tl {\mathcal O } _j , \chi _j)\, ($for $j=2,\dots , m)$ associated to $\phi $ and \eqref{solLagrangiana}, and a family of universally measurable functions $\bar w_{\chi _j}\in  \mathcal{L}^{\infty }(\mathcal O) $ for $j=2,\dots,m$, such that for all $ x\in \R^{m-1}$, $y\in \R^{n}$ and $(x_j-\delta,x_j+\delta )\subset \tl {\mathcal O } _{\hat x_j,y}$ it holds that\\
$(LS1)$  the function $(-\delta,\delta)\ni t \mapsto \phi (\Upsilon_j(x_j+t,\hat x _j ,y))$ is absolutely continuous and
\begin{equation*}\label{vector1}
\frac{d }{d t} \phi (\Upsilon_j(x_j+t,\hat x _j ,y)) = \bar w _{\chi _j}(\Upsilon _j(x_j+t,\hat x _j ,y)) \qquad \mathcal{L}^1\mbox{-a.e }t \in (-\delta,\delta).
\end{equation*}
$(LS2)$ for $j=2,\dots , m$ if there is $\alpha _j \in \R$ such that for every integral curve $\Gamma _j :I\to \mathcal O$ of $D^\phi _j$ with $\Gamma _j (0)=(x,y)$ and
\[
\frac{d}{dt} \phi (\Gamma _j (t))|_{t=0} =\lim _{t\to 0 } \frac{\phi (\Gamma _j(t)) -\phi (\Gamma _j(0))}{t} =\alpha _j,
\]
then $\alpha _j =D^\phi _j \phi (x,y)=\bar w _{\chi _j}(x,y)$.\\
$(LS3)$ $\bar w_{\chi _j}= w_j$  $\mathcal{L}^{m+n-1}$-a.e. in $\mathcal O $, for all $j=2,\dots , m$.
\end{defi} 



\subsection{Existence of (partial) Lagrangian  type  parameterization}\label{Existence Lagrangian type parameterization}
 In this section we show that the definition of the Lagrangian  type  parameterization is not empty.

%
%

\begin{lem}\label{lemma4.1bcsc} 
 Let $\mathcal O \subset \R^{m+n-1} $ be a relatively open and bounded set and let $\phi : clos( \mathcal O ) \to \R$ be a continuous function with $\{ y=0\}\subset \mathcal O$. Then there exist domains $\tl {\mathcal O}_m$, $\tl {\mathcal O} _M$ associated to the functions
\begin{equation}\label{minode1}
\begin{aligned}
\chi _m (x_j+t,\hat x _j, y ):=\min \Big\{  & \gamma _j (t) = (\gamma _{j1}(t),\dots ,  \gamma _{jn}(t) )\, : \,  (x_j+r,\hat x _j, \gamma _{j}(r))\in \mbox{clos} ( \mathcal O)  , \\ &  ( \gamma _{j1}(0 ), \dots \gamma _{jn}(0 ))=(y_1,\dots ,y_n) \\ \, \, &   \dot \gamma _{js} (r)= b^{(s)}_{j1}\phi (x_j+r,\hat x_j, \gamma _j(r))+ \frac{1}{2}\sum_{l=2}^{m}b^{(s)}_{jl}(\hat x_j )_l  \quad \mbox{for all } s=1,\dots ,n\, \Big\}
\end{aligned}
\end{equation}
\begin{equation}\label{maxode2}
\begin{aligned}
\chi _M (x_j+t,\hat x _j, y ):=\max \Big\{  & \gamma _j (t)  = (\gamma _{j1}(t),\dots ,  \gamma _{jn}(t) )\, : \,  (x_j+r,\hat x _j, \gamma _{j}(r))\in \mbox{clos} ( \mathcal O )  , \\ &  ( \gamma _{j1}(0 ), \dots \gamma _{jn}(0 ))=(y_1,\dots ,y_n) \\ \, \, &   \dot \gamma _{js} (r)= b^{(s)}_{j1}\phi (x_j+r,\hat x_j, \gamma _j(r))+ \frac{1}{2}\sum_{l=2}^{m}b^{(s)}_{jl}(\hat x_j )_l  \quad \mbox{for all } s=1,\dots ,n\, \Big\}
\end{aligned}
\end{equation}
for which $(\tl {\mathcal O}_m , \mathcal O _m)$, $(\tl {\mathcal O}_M , \mathcal O_M)$ are partial Lagrangian  type  parameterization relative to $\phi$.
\end{lem}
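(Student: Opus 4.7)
The plan is to construct $\chi_m$ and $\chi_M$ as extremal integral curves of $D^\phi_j$ using classical Peano existence together with extremal-solution theory, and then to verify the three properties $(L.1)$--$(L.3)$ of Definition \ref{defiA}.

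\textbf{Reduction to a scalar ODE and Peano existence.} Fix $j\in\{2,\dots,m\}$ and an initial datum $(x_j,\hat x_j,y)\in\mathcal O$. Integrating the vector ODE in \eqref{minode1} component by component gives, for every solution $\gamma_j$,
\[
\gamma_{js}(t)\;=\;y_s+b_{j1}^{(s)}\,\Psi(t)+\tfrac{t}{2}\sum_{l=2}^{m}b_{jl}^{(s)}(\hat x_j)_l,\qquad \Psi(t):=\int_0^t\phi\bigl(x_j+r,\hat x_j,\gamma_j(r)\bigr)\,dr.
\]
Every vertical component is thus affine in the single scalar $\Psi(t)$, and the whole $n$-dimensional system collapses to a scalar integral equation for $\Psi$ whose integrand is continuous and uniformly bounded, since $\phi$ is continuous on $\mathrm{clos}(\mathcal O)$. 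Peano's theorem yields at least one scalar solution on a maximal interval $(-\delta,\delta)$ on which $\gamma_j$ stays inside $\mathrm{clos}(\mathcal O)$; by Arzel\`a--Ascoli (uniform Lipschitz estimate from boundedness of the integrand) the family of all scalar solutions is compact in $C([-\delta,\delta],\R)$, and limits of solutions are solutions.

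\textbf{Extremal solutions, domains, and verification of $(L.1)$, $(L.3)$.} The classical Peano--Kneser--Perron theorem produces pointwise minimal and maximal solutions $\Psi^{\min}$, $\Psi^{\max}$ of the scalar equation, both themselves solutions. Defining $\chi_m$ and $\chi_M$ via the affine formula above (with $\Psi$ replaced by $\Psi^{\min}$, respectively $\Psi^{\max}$), I let $\tl{\mathcal O}_m$ (resp.\ $\tl{\mathcal O}_M$) be the open set of initial data $(x,y)$ for which the associated extremal curve stays inside $\mathcal O$ on a neighborhood of $x_j$. With these choices $(L.1)$ is built into the domains, and $(L.3)$ -- absolute continuity of $t\mapsto\Upsilon_j(x_j+t,\hat x_j,y)$ together with \eqref{vector} -- follows from the integral equations for $\Psi^{\min}$, $\Psi^{\max}$ combined with the affine formula. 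Borel measurability of $\chi_m$, $\chi_M$ in $(x,y)$ is obtained by realising them as monotone pointwise limits of Euler-type schemes applied to the scalar equation, each of which depends continuously on the initial data.

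\textbf{Monotonicity $(L.2)$ -- the main obstacle.} I need $y_s\mapsto\chi_{ms}(x,y_s,\hat y_s)$ to be non-decreasing (and analogously for $\chi_{Ms}$). If $b_{j1}^{(s)}=0$, the formula gives $\chi_{ms}(x_j+t,\hat x_j,y_s,\hat y_s)=y_s+\tfrac t 2\sum_l b_{jl}^{(s)}(\hat x_j)_l$, which is trivially monotone in $y_s$. If $b_{j1}^{(s)}\neq 0$, monotonicity reduces, via the affine formula, to a classical gluing/comparison principle for extremal scalar Peano solutions: were the inequality to fail at some first positive time $t_1$, concatenating the lower extremal trajectory on $[0,t_1]$ with the upper one on $[t_1,\delta)$ would produce a competing integral curve issuing from the larger initial datum whose value strictly beats the claimed extremum just past $t_1$, contradicting minimality (resp.\ maximality). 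The subtle point that requires the most care is that the coupling of all components through $\phi$ forbids a direct componentwise comparison of the full vector-valued characteristics; the scalar reduction via $\Psi$ of the first step is precisely what enables the one-dimensional Peano comparison theorem to apply. The construction for $\chi_M$ is symmetric, with $\min$ replaced by $\max$.
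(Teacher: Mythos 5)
Your overall strategy is the same as the paper's: collapse the coupled system to a single scalar ODE with continuous right-hand side (the paper uses $\gamma_{j1}$ as the scalar unknown, assuming w.l.o.g.\ $b^{(1)}_{j1}\neq 0$, and writes the remaining components as affine functions of it via \eqref{infunzgamma}; your $\Psi$ plays the same role and avoids singling out a component), then take extremal Peano solutions and prove monotonicity by gluing. However, two steps do not go through as written.

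First, the signs of the coefficients. You define $\chi_m$ by substituting $\Psi^{\min}$ into \emph{every} component of $\gamma_{js}(t)=y_s+b^{(s)}_{j1}\Psi(t)+\dots$. When $b^{(s)}_{j1}<0$ this produces the componentwise \emph{maximum} over all solutions, not the minimum prescribed by \eqref{minode1}. The paper is careful about exactly this point: in \eqref{relazionegamma.1}--\eqref{relazionegamma.2} the $s$-th component of the minimal parameterization is built from the minimal scalar solution when $\alpha_{1s}>0$ and from the \emph{maximal} one when $\alpha_{1s}<0$. So your construction does not yield the functions in the statement unless all nonzero $b^{(s)}_{j1}$ have the same sign.

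Second, the monotonicity $(L.2)$. Your scalar unknown always starts from $\Psi(0)=0$; the dependence on $y_s$ sits entirely in the right-hand side $F(t,\psi)=\phi\bigl(x_j+t,\hat x_j,(y_{s'}+b^{(s')}_{j1}\psi+\dots)_{s'}\bigr)$. The gluing argument for extremal Peano solutions compares two solutions of the \emph{same} ODE issued from different initial data: the concatenation at the first crossing time is again a solution, hence an admissible competitor. Two values of $y_s$ give two \emph{different} ODEs for $\Psi$, and since $\phi$ is merely continuous their right-hand sides are not ordered, so no one-dimensional comparison principle applies; your claim that the reduction to $\Psi$ is "precisely what enables" the comparison is the reverse of the situation. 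The gluing has to be run on the curve $\gamma_{js}$ itself (whose initial datum is the varying $y_s$), and for the concatenation to be a legitimate competitor the \emph{full} vector states must coincide at the crossing time; matching only the $s$-th component forces $\Psi^{(1)}(\bar t)\neq\Psi^{(2)}(\bar t)$ and hence a mismatch in every other component carrying a nonzero $b^{(s')}_{j1}$. This is why the argument is clean for $n=1$ (the paper's Step 1) and why, in the paper, it is only ever invoked in situations where at most one vertical component is nontrivial (cf.\ \eqref{Gammaovvio}, \eqref{infunzgammaDIC} and Setting \ref{Setting}). As written, your proof of $(L.2)$ has a genuine gap whenever two components have nonzero $b^{(s)}_{j1}$. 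A smaller remark in the same direction: Euler schemes for non-unique ODEs need not converge to the extremal solutions, so your route to Borel measurability is shaky; the paper instead deduces joint Borel measurability from continuity in $t$ together with the monotonicity in $y$, which is the cleaner way once $(L.2)$ is secured.
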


\begin{proof}
We split the proof in two steps. In the first step we refine the technique used in \cite[Lemma 4.1]{biblio17} in the context of $\HH^n$; while the second step is the main difference w.r.t. the case of Heisenberg groups.

$\mathbf{Step \, 1.}$ We consider the case $n=1$. Fix $j=2,\dots , m$ and $\hat x _j\in \R^{m-2}$. For simplicity, we choose $x_j=0.$ For each $(\bar t , \hat x_j, \bar y)\in \mathcal O$ we could consider the minimal and the maximal curve satisfying on clos$(\mathcal O )$ the ODE for characteristics $\eqref{vector}$ and passing through that point: in fact the functions
\begin{equation}\label{ode1}
\begin{aligned}
\gamma_{ (\bar t , \hat x_j, \bar y)} (t ):=\min  \Big\{  & \gamma _j (t) \, : \,  (r,\hat x _j, \gamma _{j}(r) )\in \mbox{clos} ( \mathcal O )  , \,   \gamma _j(\bar t )=\bar y  \\ \, \, &   \dot \gamma _{js} (r)= b^{(s)}_{j1}\phi (r,\hat x_j, \gamma _j(r))+ \frac{1}{2}\sum_{i=2}^{m}b^{(s)}_{ji}(\hat x_j )_i  \quad \mbox{for all } s=1,\dots ,n\, \Big\} \\
\end{aligned}
\end{equation}
\begin{equation}\label{ode2}
\begin{aligned}
\gamma^{ (\bar t , \hat x_j, \bar y)} (t ):=\max  \Big\{  & \gamma _j (t) \, : \,  (r,\hat x _j, \gamma _{j}(r) )\in \mbox{clos} ( \mathcal O )  , \,   \gamma _j(\bar t )=\bar y  \\ \, \, &   \dot \gamma _{js} (r)= b^{(s)}_{j1}\phi (r,\hat x_j, \gamma _j(r))+ \frac{1}{2}\sum_{i=2}^{m}b^{(s)}_{ji}(\hat x_j )_i  \quad \mbox{for all } s=1,\dots ,n\, \Big\} \\
\end{aligned}
\end{equation}
are well defined, Lipschitz and because of the continuity of $\phi $ they are still integral curves.

Moreover, denoting by r.i. the relative interior of a set, we define the domains 
\begin{equation*}
\begin{aligned}
\tl {\mathcal O}_{min} =\mbox{ r.i.} \{ ( t , \hat x_j, y)\in \R\times \mathcal O_{j,0} \, : \, (\hat x _j , y)\in \mathcal O_{j,0} , \, (t,\hat x _j ,\gamma _{(0,\hat x_j ,y)}(t))\in \mathcal O \}, \\
\tl {\mathcal O}_{max} =\mbox{ r.i.} \{ ( t , \hat x_j, y)\in \R\times \mathcal O_{j,0} \, : \, (\hat x _j , y)\in \mathcal O_{j,0} , \, (t,\hat x _j ,\gamma ^{(0,\hat x_j ,y)}(t))\in \mathcal O \}, \\
\end{aligned}
\end{equation*}
 where $\mathcal O _{j,0} = \{( \hat x _j ,r)\in \R^{m-1} \, : \, (0, \hat x _j ,r)\in \mathcal O \} $. Note that $\tl {\mathcal O}_{min}$ and $\tl {\mathcal O}_{max}$ are both nonempty because, by hypothesis,  $(0,\hat x _j ,0)\in \mathcal O$.
 
Using the fact that $\gamma_{ (\bar t , \hat x_j, \bar y)} (t )$ and $\gamma^{ (\bar t , \hat x_j, \bar y)} (t )$ are Lipschitz solutions to the ODE with continuous coefficients, we have that $\chi _{min}  ( t , \hat x_j, y)$, $\chi _{max}  ( t , \hat x_j, y)$ in the statement are $C^1 (\mathcal O \cap \{ \tau =y\} )$ in the $t$ variable for each $y $ fixed. 

Now it remains to show that $\chi _{min}$, $\chi _{max}$ are monotone w.r.t. $y$. Indeed, noticing that $\chi _{min}$, $\chi _{max}$ are jointly Borel in $( t , \hat x_j, y)$ by continuity in $t$ and the monotonicity in $y,$ we obtain the thesis, as desired.

Hence, we prove the monotonicity w.r.t. $y$ of $\chi _{min} (t, \hat x_j, y )$ for every $t$ fixed. This is a direct consequence of the following semigroup property: for $h_1, h_2 >0$ for example for $\eqref{ode1}$
\begin{equation*}
\gamma_{ (0, \hat x_j, y)} (h_1 )=: y_1, \quad \gamma_{ (h_1, \hat x_j, y_1)} (h_1 + h_2 )=: y_2  \quad \Longrightarrow \quad \gamma_{ (0, \hat x_j, y)} (h_1 + h_2 )=y_2 .
\end{equation*}
Indeed, if we put  $\chi _{min} (t, \hat x_j, y ):= \gamma _{(0, \hat x_j, y)} (t),$ we have that if $y _1< y _2$ and $\gamma_{ (0, \hat x_j, y_1)} ( t_1 )\geq \gamma_{ (0, \hat x_j, y_2)} ( t_1 )$ at a certain $t_1 >0$, by the continuity of the curves there is $ \bar t>0$ with $\bar t < t_1$ when $\gamma_{ (0, \hat x_j, y_1)} (\bar t ) = \gamma_{ (0, \hat x_j, y_2)} (\bar t )$. But then the curve
 \begin{equation*}
\gamma (t)=  \left\{
\begin{array}{l}
\gamma_{ (0, \hat x_j, y_1)} (t ), \quad \mbox{for } t\leq \bar t,
\\
\gamma_{ (0, \hat x_j, y_2)} (t ), \quad \mbox{for } t\geq \bar t,
\end{array}
\right.
\end{equation*}
is a good competitor for the definition of $\gamma_{ (0, \hat x_j, y_1)} $, which implies 
 \begin{equation*}
 \gamma_{ (0, \hat x_j, y_2)} ( t_1 )=\gamma (t_1)\geq \gamma_{ (0, \hat x_j, y_1)} (t _1 ),
\end{equation*}
and so we get the equality. In a similar way, we prove the statement for $\gamma ^{(0,\hat x _j,y )}$. 

$\mathbf{Step \, 2.}$ We consider the case $n>1$. We want to reduce to the step 1.

Fix $j=2,\dots , m$ and $\hat x _j\in \R^{m-2}$. If $b^{(s)}_{1j}= 0$ for some $s=1,\dots, n$ it is evident that there is a unique solution of the Cauchy problem 
\begin{equation*}
\left\{
\begin{array}{l}
 \dot \gamma _{js} (t)= \frac{1}{2}\sum_{i=2}^{m}b^{(s)}_{ji}x_i ,
\\
\gamma _{js} (0)= y_s,
\end{array}
\right.
\end{equation*}
Obviously, it is
\begin{equation}\label{Gammaovvio}
\gamma _{js} (t)=\frac{1}{2} t \sum_{l=2}^{m}b^{(s)}_{jl}x_l + y_s.
\end{equation}

Henceforth we consider the curves $\gamma _{js} $ when $b^{(s)}_{1j} \ne 0$. 

Without loss of generality we can suppose that $b^{(1)}_{1j}\ne 0$. Let $\alpha_{1s}:=\frac{b^{(s)}_{j1}}{b^{(1)}_{j1}}.$ The key observation is that we can reduce $\gamma _{j2}, \dots , \gamma _{jn}$  satisfying
\begin{equation}\label{gammaRISOLVE}
\begin{aligned}
 \dot \gamma _{js} (t)= b^{(s)}_{j1}\phi (t,\hat x_j, \gamma _j(t))+ \frac{1}{2}\sum_{l=2}^{m}b^{(s)}_{jl}x_l,
\end{aligned}
\end{equation}
with $b^{(s)}_{1j} \ne 0$ and $\gamma_{ js} (0)=y_s$ as
\begin{equation*}
\begin{aligned}
\dot \gamma _{js}(t)= \alpha_{1s} \dot \gamma _{j1} (t) + \frac{1}{2}  \sum_{l=2}^{m}x_l \Big( b^{(s)}_{jl}-\alpha_{1s} b^{(1)}_{jl} \Big), 
\end{aligned}
\end{equation*}
for every $s=2,\dots, n$ with $b^{(s)}_{j1} \ne 0$ and, consequently, 
\begin{equation}\label{infunzgamma}
\begin{aligned}
\gamma _{js}(t)= \alpha_{1s}\gamma _{j1} (t) + \frac{1}{2} t\, \sum_{l=2}^{m}x_l \Big( b^{(s)}_{jl}-\alpha_{1s} b^{(1)}_{jl} \Big) +\left( y_s -\alpha_{1s} y _1\right), 
\end{aligned}
\end{equation}
for every $s=2,\dots, n$ with $b^{(s)}_{j1} \ne 0$. Hence the function $\phi $ depends only on $\gamma _{j1}$ and we can apply the case $n=1$.  
Consequently, there exist $\gamma_{ (0 , \hat x_j, y)} $ and $\gamma^{ (0, \hat x_j, y)} $ associated to $\gamma _{j1}$ satisfying $\eqref{minode1}$ and $\eqref{maxode2}$. 

Now we define
\begin{equation*}
\begin{aligned}
\Upsilon _{ (0 , \hat x_j,  y)} & := \Big(\gamma_{ (0, \hat x_j, y)} ,\gamma^2_{ (0 , \hat x_j, y)} ,\dots , \gamma^n_{ (0 , \hat x_j,y)} \Big),\\
\Upsilon^{ (0 , \hat x_j,  y)} & := \Big(\gamma^{ (0 , \hat x_j,  y)} ,\gamma_2^{ (0 , \hat x_j, y)} ,\dots , \gamma_n^{ (0 , \hat x_j, y)} \Big),\\
\end{aligned}
\end{equation*}
where if $b^{(s)}_{j1}= 0$ the curve $\gamma^s_{(0 , \hat x_j, y)}=\gamma _s^{(0 , \hat x_j, y)}$ is defined as $\eqref{Gammaovvio}$ and if $b^{(s)}_{1j}\ne 0$ we have two cases: if $\alpha_{1s} >0$, then
\begin{equation}\label{relazionegamma.1}
\begin{aligned}
\gamma^s_{ (0 , \hat x_j, y)} (t)&= \alpha_{1s} \gamma_{ (0 , \hat x_j,y)} (t) +  \frac{1}{2} t\, \sum_{l=2}^{m}x_l \Big( b^{(s)}_{jl}- \alpha_{1s}b^{(1)}_{jl} \Big) +\big(y_s - \alpha_{1s} y_1\big),\\
\gamma_s^{(0 , \hat x_j, y)} (t)&= \alpha_{1s} \gamma^{ (0 , \hat x_j,y)}(t) +  \frac{1}{2} t\, \sum_{l=2}^{m}x_l \Big( b^{(s)}_{jl}-  \alpha_{1s} b^{(1)}_{jl} \Big) +\big(y _s -\alpha_{1s} y_1\big).\\
\end{aligned}
\end{equation}
On the other hand, if $\alpha_{1s} <0$, then
\begin{equation}\label{relazionegamma.2}
\begin{aligned}
\gamma^s_{ (0 , \hat x_j, y)} (t)&= \alpha_{1s}  \gamma^{ (0 , \hat x_j,y)} (t) +  \frac{1}{2} t\, \sum_{l=2}^{m}x_l \Big( b^{(s)}_{jl}- \alpha_{1s}b^{(1)}_{jl} \Big) +\big(y_s - \alpha_{1s} y_1\big),\\
\gamma_s^{(0 , \hat x_j, y)} (t)&= \alpha_{1s}\gamma_{ (0 , \hat x_j,y)} (t) +  \frac{1}{2} t\, \sum_{l=2}^{m}x_l \Big( b^{(s)}_{jl}-  \alpha_{1s} b^{(1)}_{jl} \Big) +\big(y _s -\alpha_{1s} y_1\big).\\
\end{aligned}
\end{equation}

We would like to prove that $\Upsilon_{ (0 , \hat x_j, y)} (t) $ satisfying $\eqref{minode1}$. 

We consider the case $\alpha_{1s} >0$ and we prove that $\gamma^s_{ (0 , \hat x_j, y)}$ is the minimal curve associated to $\gamma _{js}$, i.e. if there is $\hat \gamma^s_{ (0 , \hat x_j,  y)}$ satisfied \eqref{gammaRISOLVE} 
and such that  
\begin{equation*}\label{8nov1}
\hat \gamma^s_{ (0 , \hat x_j, y)} (r)\leq \gamma^s_{ (0 , \hat x_j, y)}(r),
\end{equation*}
for some $r>0$, then 
\begin{equation}\label{assurdo1}
\hat \gamma^s_{ (0 , \hat x_j, y)} (r)= \gamma^s_{ (0 , \hat x_j, y)}(r).
\end{equation}
 We consider the curve $\hat \gamma^1_{ (0 , \hat x_j, y)} $ associated to $\hat \gamma^s_{ (0 , \hat x_j,y)} $ as in $\eqref{infunzgamma}$; recall that $b^{(s)}_{j1} \ne 0,$ it easy to see that
\begin{equation}\label{8nov2}
\hat \gamma^1_{ (0 , \hat x_j, y)}(t) = \alpha^{-1}_{1s}\hat \gamma^s_{ (0 , \hat x_j, y)}  (t) +\frac{1}{2} t\, \sum_{l=2}^{m}x_l \Big(b^{(1)}_{jl}  -\alpha^{-1}_{1s}b^{(s)}_{jl} \Big) +\Big(y_1 -\alpha^{-1}_{1s}y_s \Big).
\end{equation}
We know that $\gamma_{ (0 , \hat x_j, y)}$ is the minimal curve associated to $\gamma _{j1}$ satisfying on clos$(\mathcal O )$ the ODE for characteristics $\eqref{vector}$ and consequently 
\begin{equation}\label{8nov8}
\gamma_{ (0 , \hat x_j, y)} (t)\leq \hat \gamma^1_{(0 , \hat x_j, y)}(t),
\end{equation}
for all $t>0$. Now using $\eqref{relazionegamma.1},$ it follows
\begin{equation*}\label{8nov3}
\gamma_{ (0 , \hat x_j,y)} (t)= \alpha^{-1}_{1s}\gamma^s_{ (0 , \hat x_j, y)}  (t)+ \frac{1}{2} t\, \sum_{l=2}^{m}x_l \Big(b^{(1)}_{jl}  -\alpha^{-1}_{1s}b^{(s)}_{jl} \Big) +\Big(y_1 -\alpha^{-1}_{1s}y_s \Big),
\end{equation*}
for each $t>0$ and, consequently, recall that $\alpha_{1s}>0$ and putting together $\eqref{8nov2}$ and $\eqref{8nov8}$ with $t=r$ we obtain that
\begin{equation*}
 \gamma^s_{ (0 , \hat x_j, y)}(r)\leq \hat \gamma^s_{(0 , \hat x_j, y)} (r),
\end{equation*}
Hence $\eqref{assurdo1}$ holds, as wished. In the similar way, we can prove the other cases.

The last step is to prove that $\gamma^s_{ (0 , \hat x_j, y)}$ is non decreasing in $y _s$. If $b^{(s)}_{j1} = 0$, it is clear. On the other hand, if $b^{(s)}_{j1} \ne 0$, we can use the same argument in Step 1.


\end{proof}

%

In Proposition \ref{AppendiceA.1} we show how to make a partial parameterization $\chi$  surjective. 
In the following lemma, w.l.o.g. in a simpler setting, we provide instead a full Lagrangian  type parameterization, defined at once instead of extending a given one.

\begin{prop}\label{lemma4.2bcsc} 
Let $\G$ be a Carnot group of step $2$ and $\B= \max\{ b^{(s)}_{j\ell}\,:\, s=1,\dots , n , j,\ell =1,\dots , m\}.$ Then, if $n>1,$ there exist a suitable open subset $U$ of $\R^{m+n-1}$ which depends on $\B$ and a full Lagrangian type  pa\-ra\-me\-teri\-zation associated to a continuous function $\phi :U \to \R$ which is also continuous on the closure of $U$. On the other hand, if $n=1$, the statement holds with $U=(0,1)^{m}.$
\end{prop}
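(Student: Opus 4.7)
The plan is to exhibit an explicit example with the simplest possible choice $\phi\equiv 0$ and to build the parameterization from the affine flows of $D^\phi_j$ starting from the coordinate hyperplane $\{x_j=0\}$. With $\phi\equiv 0$, Definition \ref{defintder} reduces each operator to the constant-in-$y$ vector field
\[
D^\phi_j\;=\;\partial_{x_j}+\sum_{s=1}^{n}A_{js}(\hat x_j)\,\partial_{y_s},\qquad A_{js}(\hat x_j):=\tfrac12\sum_{\ell=2}^{m}b^{(s)}_{j\ell}(\hat x_j)_\ell,
\]
whose drift depends only on $\hat x_j$ and is bounded on $(0,1)^{m-1}$ by $\tfrac{(m-1)\B}{2}$. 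Every Cauchy problem therefore admits a unique, globally defined, affine solution $r\mapsto(r,\hat x_j,y^0+r A_j(\hat x_j))$.

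For each $j\in\{2,\dots,m\}$ I would then set
\[
\chi_{js}(x,y):=y_s+x_j A_{js}(\hat x_j),\qquad \tilde{\mathcal O}_j:=\{(x,y)\in\R^{m+n-1}:(x,\chi_j(x,y))\in U\},
\]
which is a continuous (hence Borel) map on an open set. Property (L.1) holds by construction; monotonicity (L.2) is immediate because $\chi_{js}$ is affine of slope $1$ in $y_s$; the ODE property (L.3) follows from the one-line calculation $\partial_{x_j}\chi_{js}=A_{js}(\hat x_j)$, which coincides with the $y_s$-component of $D^\phi_j$ at $(x,\chi_j(x,y))$ since the $\phi$-contribution vanishes identically. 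The continuity of $\phi\equiv 0$ on $\mathrm{clos}(U)$ is trivial.

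The only real issue, and the one that separates the two cases in the statement, is to choose $U$ so that fullness holds, i.e.\ so that for each $j$ and $s$ the affine map $y_s\mapsto y_s+x_j A_{js}(\hat x_j)$ is surjective from $\tilde{\mathcal O}_{j(x,\hat y_s)}$ onto the $y_s$-fibre of $U$ at $(x,\hat y_s)$. When $n=1$ there is nothing to worry about: $\hat y_s$ is empty, and for $U=(0,1)^m$ the fibre $\tilde{\mathcal O}_{j(x)}$ equals the open interval $(-x_j A_j(\hat x_j),1-x_j A_j(\hat x_j))$ of length $1$, which $\chi_j(x,\cdot)$ sends bijectively onto $(0,1)$. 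When $n>1$, however, the different drifts $A_{j1},\dots,A_{jn}$ for various $(j,s)$ cannot all be absorbed simultaneously inside the unit cube, and I would therefore take $U=(0,1)^{m-1}\times(-R,R)^n$ with $R$ large with respect to $\tfrac{(m-1)\B}{2}$, so that every fibre map is surjective onto the $y_s$-fibre of $U$. This matches the strategy sketched in the introduction: one first handles the corank-$1$ case as above (following \cite{biblio17}), and then for $n>1$ the relation \eqref{infunzgamma} makes the vertical components of an integral curve of $D^\phi_j$ affine functions of a single ``pivot'' component $\gamma_{j1}$ (or of any $\gamma_{js_0}$ with $b^{(s_0)}_{j1}\neq 0$), reducing the construction to the one-dimensional one already solved.

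The main obstacle — and really the only non-trivial point — is precisely the simultaneous cross-coupling of the $n$ drifts in the $n>1$ regime: this is what forces the vertical size of $U$ to be taken large in terms of $\B$ and is the reason why the unit box $(0,1)^m$ no longer suffices in that case.
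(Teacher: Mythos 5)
Your construction is internally consistent, but it proves a much weaker statement than the one the paper needs, because you have read the quantifier on $\phi$ the wrong way. The proposition is invoked in the proof of Theorem \ref{teoremaFinale1}, implication $(2)\Rightarrow(3)$, to produce a full Lagrangian type parameterization for the \emph{given} continuous distributional solution $\phi$; so ``a continuous function $\phi:U\to\R$'' must be understood as an arbitrary continuous function (continuous up to $\mathrm{clos}(U)$), not as one you are free to choose. By taking $\phi\equiv 0$ you reduce the characteristic system to the affine ODE $\dot\gamma_{js}=\tfrac12\sum_{\ell}b^{(s)}_{j\ell}x_\ell$, which has unique, globally defined, explicitly computable solutions, and the entire difficulty of the proposition evaporates. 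Your example would at best show that the class of full Lagrangian parameterizations is nonempty for one particular $\phi$, which is useless for the main theorem.

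For a general continuous $\phi$ the field $D^\phi_j$ is only continuous, its integral curves need not be unique, and the problem is to \emph{select} a family of characteristics $\chi_{js}(t,\hat x_j,\cdot)$ that is simultaneously Borel, monotone and surjective in the $y$-variable (property (L.2) together with fullness). This is the actual content of the paper's proof: through each point one takes the curve that is minimal forward and maximal backward in $t$; one shows that the resulting family $\mathcal{C}$, closed up in the uniform topology to $\mathcal{C}^*$, is compact (Arzel\`a--Ascoli), totally ordered, connected, and still consists of characteristics; and one then builds a continuous strictly order-preserving bijection $\theta_j(\gamma)=\sum_l 2^{-l}\gamma(r_l)$ from $\mathcal{C}^*$ onto $[0,2]$, whose inverse yields $\chi_j(t,\hat x_j,y):=\theta_j^{-1}(y)(t)$. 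The case $n>1$ is then reduced to $n=1$ via the affine relations \eqref{infunzgamma} among the vertical components, and the enlarged box $U$ depending on $\B$ enters only at that last stage. None of these steps appears in your proposal, so the proof as written does not establish the proposition.
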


\begin{proof}We split the proof in several steps. In the first three steps we refine the technique used in \cite[Lemma 4.2]{biblio17} in $\HH^n$; while the last step is the main difference w.r.t. the case of Heisenberg groups.

$\mathbf{At\,\, first,\,\, we \,\, consider\,\,  \G \,\,  a \,\, corank\,\,  1\,\, Carnot\,\, group}$ as in Example \ref{exaCorank1}. Fix $j=2,\dots , m$ and $\hat x_j\in [0,1]^{m-2}$. For simplicity, we write $ b_{j1}$  instead of  $ b^{(1)}_{j1}$ and we choose $x_j=0$. We define the map $$f_1(r, \gamma _j(r) ) := b_{j1}\phi (r,\hat x_j, \gamma _j(r))+ \frac{1}{2}\sum_{l=2}^{m}b_{jl}x_l,$$ where $\phi (\cdot , \hat x _j , \cdot)$ be a continuous function on $[0,1] \times [0,1]$. We want to give a Lagrangian type  pa\-ra\-me\-teri\-zation for its restriction to $(0,1)^2$ as we define it on an open set. 
 
 $\mathbf{Step \, 1.}$ We show that we can assume $f_1$ is compactly supported in $(t,y)\in [0,1]\times (0,1)$. On the contrary, one can extend it to a compactly supported function $\bar f_1$ on $[0,1]\times (-1,2)$: restricting the Lagrangian type  parameterization $(\tl {\mathcal O} ^{\bar f_1}, \chi ^{\bar f_1})$ for $\bar f_1$, defined as described below, to the open set
\begin{equation*}
\tl {\mathcal O} :=\{(t,\hat x_j,y ) \in \tl {\mathcal O} ^{\bar f_1} \, :\, \chi ^{\bar f_1}_j (t,\hat x_j, y) \in (0,1)\},
\end{equation*}
one will get a Lagrangian  type parameterization for $f_1$. The assumption of $y$ compactly supported in $(0,1)$ implies that there are two characteristics, one starting from $(0,0)$ and one from $(0,1)$ which satisfy $$ \dot \gamma _j(t)= b_{j1}\phi (t,\hat x_j, \gamma _j(t))+ \frac{1}{2}\sum_{l=2}^{m}b_{jl}x_l \equiv 0.$$
This means respectively $\gamma _j(t)\equiv 0$ and $\gamma _j(t)\equiv 1$ for each $t\in [0,1]$.

After this simplification, we associate to each point $(\bar t ,\hat x_j, \bar y )$ with $ (\bar t , \bar y )\in [0,1]^2$ a curve $\gamma _j$ which is minimal forward in $t$ and maximal backward:
 \begin{equation}\label{curvadef7}
\gamma _j(t,\bar t, \bar y ) =  \left\{
\begin{array}{l}
\gamma_{ (\bar t, \hat x_j, \bar y)} (t ), \quad \mbox{for } t\geq \bar t,
\\
\quad
\\
\gamma^{ (\bar t, \hat x_j, \bar y)} (t ),  \quad \mbox{for } t< \bar t,
\end{array}
\right.
\end{equation}
where $\gamma_{ (\bar t, \hat x_j, \bar y)} (t ) $ and $\gamma^{ (\bar t, \hat x_j, \bar y)} (t ) $ were defined in $\eqref{ode1}$ and $\eqref{ode2}$. Observe that the curve $\gamma _j (\cdot , \bar t , \bar y)$ is defined on the whole $[0,1]$.

$\mathbf{Step \, 2.}$  Now we consider the set
\begin{equation*}
\mathcal{C}:= \{ \gamma _j(\cdot , \bar t , \bar y )\, :\, [0,1]\to [0,1], \, (\bar t , \bar y ) \in [0,1]^2 \}.
\end{equation*}
We will endow $\mathcal{C}$ by the topology of uniform convergence on $[0,1]$ and the following total order relation
\begin{equation}\label{relazioneordine}
\gamma _j(\cdot , t _1, y _1) \preceq \gamma _j(\cdot ,  t_2 , y_2) \quad \Longleftrightarrow \quad \gamma _j(t , t _1, y_1) \leq \gamma _j(t,  t_2 , y_2), \quad \forall t \in [0,1]
\end{equation}

Let us denote by $\mathcal{C}^*$ the closure of $\mathcal{C} \subset C([0,1])$ endowed with the topology of uniform convergence and satisfy the following properties:
\begin{itemize}
		\item[(a)] $\mathcal{C}^*$ is compact;
		\item[(b)] The total order relation  \eqref{relazioneordine}  still applies in $\mathcal{C}^*$;
		\item[(c)] $\mathcal{C}^*$ is connected;
		\item[(d)] $\mathcal{C}^*$  is still a family of characteristic curves for $f.$
\end{itemize}
That means for each $\gamma _j (\cdot , t _1, y _1) \in \mathcal{C}^* $ we have $\dot \gamma _j (t)=b_{j1}\phi (t,\hat x_j, \gamma _j(t))+ \frac{1}{2}\sum_{l=2}^{m}b_{jl}x_l.$ 

$\mathbf{(a).}$ Since $\mathcal{C}$ is a family of equi-Lipschitz continuous and bounded functions of $C([0,1])$, then, from Arzel\'a-Ascoli's Theorem $\mathcal{C}^*$  is compact, i.e., (a) holds.

$\mathbf{(b).}$ Let $\gamma _j, \gamma '_j \in \mathcal{C}^*$, we would like to show that $\gamma _j \preceq \gamma '_j $ or $\gamma '_j \preceq \gamma _j$. By definition there are two sequence $\{ \gamma _j^h\}_h, \{ \gamma _j^{'h}\}_h \subset \mathcal{C}$ such that $\gamma _j^{h} \to \gamma _j$ and $\gamma _j^{'h} \to \gamma '_j$ uniformly in $[0,1]$. If $ \gamma _j  \ne \gamma '_j$, then there is $t_0 \in [0,1] $ such that
\begin{equation*}
\gamma _j (t_0) <\gamma '_j (t_0) \quad \mbox{or} \quad \gamma '_j (t_0)< \gamma _j (t_0).
\end{equation*}
Suppose $\gamma _j (t_0) <\gamma '_j (t_0) $. Then we will show that 
\begin{equation}\label{bcsc4.7}
\gamma _j (t) \leq \gamma '_j (t) \quad \mbox{for all } t\in [0,1].
\end{equation}
Let $0<\varepsilon < \frac{ \gamma _j(t_0)-\gamma ' _j(t_0) }{2}$, these is $\bar h=\bar h(\varepsilon) \in \N$ such that
\begin{equation*}
|\gamma _j(t) -\gamma _j^{h} (t)| <\varepsilon \quad \mbox{ and }  \quad |\gamma '_j(t) -\gamma _j^{'h} (t)| <\varepsilon  \quad  \mbox{for all } t\in [0,1], h> \bar h. 
\end{equation*}
As a consequence, 
\begin{equation*}
\gamma _j^{h} (t_0) <\gamma _j (t_0) +\varepsilon < \gamma '_j (t_0) -\varepsilon < \gamma _j^{'h} (t_0) \quad  \mbox{for all } h> \bar h,
\end{equation*}
and so, recall that $\gamma _j^{h}, \gamma _j^{'h}$ are ordered, we get that $\gamma _j^{h} (t) \leq  \gamma _j^{'h} (t)$ for all $t\in [0,1]$, $ h> \bar h$. The inequality \eqref{bcsc4.7} holds passing to the limit as $h\to \infty.$

$\mathbf{(c).}$ By contradiction, we suppose that $\mathcal{C}^* =\mathcal{C}^*_1 \cup \mathcal{C}^*_2$ and $\mathcal{C}^*_1 \cap \mathcal{C}^*_2= \emptyset$, where $\mathcal{C}^*_1, \mathcal{C}^*_2$ are non empty, closed sets in $C([0,1])$. It is well-known that, from (a) and (b), for each subset $\mathcal{A} \subset \mathcal{C}^*$ there exists the least upper bound (or supremum) $\sup \mathcal{A}$ and greatest lower bound (or infimum) $\inf \mathcal{A}$ of $\mathcal{A}$. More precisely, we have 
\begin{equation}
\gamma _{l_1}:= \inf \mathcal{C}^*_1 \leq \gamma _{L_1}:= \sup \mathcal{C}^*_1 \qquad \mbox{and} \qquad \gamma _{l_2}:= \inf \mathcal{C}^*_2 \leq \gamma _{L_2}:= \sup \mathcal{C}^*_2,
\end{equation}
where  $\gamma _{l_i}, \gamma _{L_1} \in \mathcal{C}^*_i$ for $i=1,2$ because $\mathcal{C}^*_1$ and $\mathcal{C}^*_2$ are closed. Moreover since $\mathcal{C}^*_1 \cap \mathcal{C}^*_2= \emptyset$, it follows  $\gamma _{L_1} \preceq \gamma _{l_2}$ or $\gamma _{L_2} \preceq \gamma _{l_1}$. If we assume, for instance, that $\gamma _{L_1} \preceq \gamma _{l_2},$ then $\gamma _{L_1}(t)\leq  \gamma _{l_2} (t)$ for all $t\in [0,1]$ and $\gamma _{L_1}(t_0) <  \gamma _{l_2} (t_0)$  for a suitable $t_0 \in [0,1]$. Now we obtain a contradiction because if we put 
\begin{equation}
\bar t:= t_0, \quad \bar y :=\frac{\gamma _{L_1}(t_0)+\gamma _{l_2}(t_0) }{2}, \quad \gamma (t):=\gamma _j (t, \bar t , \bar \tau), \mbox{ for } t\in [0,1],
\end{equation}
then by definition $\gamma (t) \in \mathcal{C} \subset \mathcal{C}^*,$ but $\gamma _{L_1} \preceq \gamma \preceq  \gamma _{l_2},$ i.e.,$\gamma $ doesn't belong to $\mathcal{C}^*_1 \cup \mathcal{C}^*_2 =\mathcal{C}^*$.

$\mathbf{(d).}$  Let $\gamma _j \in \mathcal{C}^*$, then by definition there is a sequence $\{\gamma _j^{h}\}_h \subset  \mathcal{C}$ such that $\gamma _j^{h} \to \gamma _j$ uniformly in $[0,1]$. Moreover
\begin{equation*}
\gamma _j^{h} (t) -\gamma _j^{h} (\bar t) = b_{j1} \int _0^{t-\bar t} \phi (r,\hat x_j, \gamma _j^{h}(r))\, dr+ \frac{1}{2} (t-\bar t)\sum_{l=2}^{m} b_{jl}x_l ,
\end{equation*}
and so passing to the limit as $h\to \infty $ in the previous identity we get the last claim (d).

$\mathbf{Step \, 3.}$ Now we are able to give a full Lagrangian  type parametrization associated to $\phi.$ In order to do it, we consider the map $\theta _j:\mathcal{C}^* \to \R$ given by
\begin{equation*}
\theta _j (\gamma _j) :=\sum_{l=0}^\infty \frac{1}{2^l} \gamma _j(r_l),
\end{equation*}
where $(r_l)_{l\in \N}$ is an enumeration of $\mathbb{Q} \cap [0,1]$. Notice that $\theta _j$ satisfies the following properties:
\begin{itemize}
		\item[(a.1)] By definition, $\theta _j$ is continuous.
		\item[(b.1)] By definition,  $\theta _j$ is strictly order preserving, that is $\theta _j(\gamma _j) < \theta _j(\gamma ' _j) \, \mbox{ if } \,  \gamma _j  \prec \gamma ' _j.$ 
		\item[(c.1)] $\theta _j (\mathcal{C}^* ) =[0,2];$ indeed, noticing that $\theta _j (\gamma _j(\cdot , 0, 0))=0$ and $\theta _j (\gamma _j (\cdot , 0, 1))=2,$ the equality follows from (c), (a.1) and (b.1). 
		\item[(d.1)] Using  (a), (b.1) and (c.1), we have that there exists  $\theta^{-1} _j :[0,2]\to \mathcal{C}^*$  continuous.
\end{itemize}

Therefore for each $j=2,\dots , m$ and $\hat x_j \in [0,1]^{m-2}$ fixed, we consider the map $\chi _{\hat x_j}: [0, 1] \times [0, 2] \to [0, 1]$ defined as
\begin{equation}
\chi _{\hat x_j}(t, y):= \theta _j^{-1} (y) (t ) \qquad \mbox{for } (t,y)\in [0, 1] \times [0, 2].
\end{equation}
which is  a continuous function because of (d.1). Hence, we can define $\chi _j: [0,1]^{m-1} \times [0,2] \to [0,1]$ as $$\chi _j (t,\hat x_j ,y):=\chi _{\hat x_j} (t,y).$$  
Finally, for all $j=2,\dots , m$  the map $\Upsilon_j : (0, 1)^{m-1} \times (0, 2) \to (0, 1)^{m},$  given by $$\Upsilon_j(t,\hat x_j, y ) := (t,\hat x_j, \chi _j(t,\hat x_j, y)),$$ turns out to be a full Lagrangian  type parameterization associated to $\phi: (0, 1)^m\to \R,$ as desired.
\medskip

$\mathbf{Step \, 4.}\, \mathbf{We\,\, consider\,\, the\,\, case\,\, n>1.}$ Fix $j=2,\dots , m$ and $\hat x _j\in [0,1]^{m-2}$. If $b^{(s)}_{1j}= 0$ for some $s=1,\dots, n$ it is evident that \begin{equation}\label{GammaovvioDIC}
\gamma _{js} (t)=\frac{1}{2}( t-\bar t) \sum_{l=2}^{m}b^{(s)}_{jl} x_l  + \bar y_s.
\end{equation}

As a consequence, we consider the curves $\gamma _{js} $ when $b^{(s)}_{j1} \ne 0$.  

Thanks to Remark \ref{remChange}, without loss of generality, we can suppose that  $b^{(s)}_{j1}> 0$ for $s=1,\dots , n$ and we also assume that $b^{(1)}_{j1} =\max \{b^{(s)}_{j1}\,:\, s=1,\dots ,n \}$. As in Step 2 of Lemma \ref{lemma4.1bcsc}, the key observation is that we can reduce $\gamma _{j2}, \dots , \gamma _{jn}$ satisfying $ \dot \gamma _{js} (t)= b^{(s)}_{j1}\phi (r,\hat x_j, \gamma _j(t))+ \frac{1}{2}\sum_{l=2}^{m}b^{(s)}_{jl}x_l $ with $\gamma _{js} (\bar t)=\bar y _s$ as
\begin{equation}\label{infunzgammaDIC}
\begin{aligned}
\gamma _{js}(t)= \alpha_{1s}\gamma _{j1} (t) + \frac{1}{2} (t-\bar t)\, \sum_{l=2}^{m}x_l \left( b^{(s)}_{jl}-\alpha_{1s} b^{(1)}_{jl} \right) +\left(\bar y _s -\alpha_{1s} \bar y_1\right), 
\end{aligned}
\end{equation}
for $s=2,\dots, n$ such that $b^{(s)}_{j1} \ne 0,$ where $\alpha_{1s}:=\frac{b^{(s)}_{j1}}{b^{(1)}_{j1}} \in (0,1]$. Hence the function $\phi $ depends only on $\gamma _{j1}$ and we can apply the case $n=1$. Consequently, for each $j=2,\dots , m$ and $\hat x_j \in [0,1]^{m-2}$ fixed, there is a map $\chi _{j1}(t,\hat x_j, y) $ associated to $\gamma _{j1}$. 

Then we define the map $\Upsilon_j : \tl {\mathcal O _j} \to \R^{m+n-1}$ given by $$\Upsilon _j(t,\hat x_j, y) := (t,\hat x_j, \chi _{j1}(t,\hat x_j, y), \chi _{j2}(t,\hat x_j, y ),\dots, \chi _{jn}(t,\hat x_j, y)),$$
where the curve $\chi _{js}( t , \hat x_j, \tau)$ is defined as $\eqref{GammaovvioDIC}$ if $b^{(s)}_{1j}= 0$ or
\begin{equation*}
\begin{aligned}
\chi _{js}( t , \hat x_j, y)= \alpha_{1s} \chi _{j1}( t , \hat x_j,y) + \frac{1}{2} (t-\bar t)\, \sum_{l=2}^{m}x_l \left( b^{(s)}_{jl}-\alpha_{1s} b^{(1)}_{jl} \right) +\left(\bar y_s -\alpha_{1s} \bar y_1\right), 
\end{aligned}
\end{equation*}
 if $b^{(s)}_{j1} \ne0$. Now the idea is that for a suitable choice of $y_s,$ we have that 
 \begin{equation*}
\chi _{js}( t , \hat x_j, y) \in (0,K(\B))\quad  \mbox{where}\quad K(\B)>0 \quad not \mbox{ depends on $j.$}
\end{equation*}

 More precisely, recall that $\B= \max\{ b^{(s)}_{j\ell}\,:\, s=1,\dots , n , j,\ell =1,\dots , m\}$ is larger than zero because the matrices are skew- symmetric, we define an open set  $$ \tl {\mathcal O _j} =\{ (t,\hat x_j, y) \in (0,1)\times (0,1)^{m-2}\times \R^{n} \}$$ as follows:
\begin{itemize}
\item $y_1\in (0,2);$
\item if $b^{(s)}_{j1}= 0$, then $y _s \in \left(|\frac{1}{2}  \sum_{l=2}^{m}b^{(s)}_{jl}x_l |,   (m-1) \B +1 -|\frac{1}{2}  \sum_{l=2}^{m}b^{(s)}_{jl}x_l |\right)$; 
\item  if $b^{(s)}_{j1}\ne 0$, then \\  
$y _s \in \left(| \frac{1}{2} \sum_{l=2}^{m}x_l \left( b^{(s)}_{jl}-\alpha_{1s} b^{(1)}_{jl} \right) | +2, (m-1)\B +3 - \alpha_{1s} -|\frac{1}{2} \sum_{l=2}^{m}x_l \left( b^{(s)}_{jl}-\alpha_{1s} b^{(1)}_{jl} \right)| \right).$ 
\end{itemize}

It is easy to see that $\Upsilon_j $ is onto and, in particular, $$\Upsilon_j ( \tl {\mathcal O _j}) = (0, 1)^{m-1} \times (0,1)\times (0, \B_2)\times \dots \times (0,\B _n),$$ where 
 \begin{equation}\label{curvadef7}
\B_s =  \left\{
\begin{array}{l}
(m-1)\B +1, \quad \mbox{if } b^{(s)}_{j1}= 0,
\\
 (m-1)\B + 3,  \quad \mbox{if } b^{(s)}_{j1}\ne 0.
\end{array}
\right.
\end{equation}

Hence, if we put $$ U:=  (0, 1)^{m-1} \times (0,1)\times (0, \B_2)\times \dots \times (0,\B _n),$$ then for each $j=2,\dots, m$, we proved that there is $\Upsilon _j :\tl {\mathcal O _j} \to U$ which is a full Lagrangian type  parameterization associated to $\phi :U\to \R$, as desired. 

\end{proof}

\section{Intrinsic Lipschitz map vs. Lagrangian type  solution}
In this section we prove the main theorem \ref{teoremaFinale1} of this paper, i.e.,
\begin{equation*}
\begin{aligned}
\phi \mbox{ is intrinsic Lipschitz} & \quad  \Longleftrightarrow  \quad \phi \mbox{ is a Lagrangian type  solution of } \eqref{solLagrangiana}. \\
\end{aligned}
\end{equation*}
Actually, we will show that
\begin{equation*}
\begin{aligned}
\phi \mbox{ is a Distributional solution of } \eqref{solLagrangiana} & \quad \Longleftrightarrow \quad   \phi \mbox{ is a Lagrangian type  solution of } \eqref{solLagrangiana}\\
\end{aligned}
\end{equation*}
when $\phi :\W \to \V$ is locally $1/2 $-H\"older continuous along the vertical components. We underline that it is not trivial fact that a weak solution of \eqref{solLagrangiana}  is locally $1/2 $-H\"older continuous along the vertical components; indeed, for instance, in the context of step 2 Carnot groups, this problem is open for continuous datum $w$ when $\V$ has dimension larger than one. On the other hand, in \cite{biblioKOZHEVNIKOV}, the author prove that we can not drop the H\"older regularity when $\G$ is a Carnot group of step 3. Finally, in \cite{biblioADDD}, we give a positive answer in step 2 with $\V$ 1-dimensional. 

In order to obtain Theorem \ref{teoremaFinale1}, we restrict to a subclass of Carnot groups of step 2 defined in Section \ref{Setting}.  We underline that thanks to the "good" structure of our intrinsic derivatives in this  "good"  subclass of step 2, we can reduce the study of PDE system \eqref{solLagrangiana} to ODE (see Proposition \ref{Lemma6.2bcsc}).

Roughly speaking, an
$\mathcal{L}^\infty$-representative $w$ for the datum of the ODEs \eqref{equationODECONBARW}  provided by taking the $t$-derivative of $\phi(t,\hat x_j, \chi _{j1}(t,\hat x_j,y),\dots ,  \chi _{jn}(t,\hat x_j,y))$, which is $\phi$ evaluated along the characteristics $\chi _j (t, \tau)$ of the Lagrangian type parameterization; by construction, it
coincides with the second $t$-derivative of $\chi _{j\bar s}(t,\hat x_j,y)$ where $\bar s$ is the only one index such that $b_{j1}^{(\bar s)} \ne 0$. Here we have hidden the fact that we need to come back from $(t,y)$
to $(t,\tau)$, a change of variable that is not single valued, not surjective because the second derivative was defined only
almost everywhere, and which may map negligible sets into positive measure sets. We overcome the difficulty
showing that it is enough selecting any value of the second derivative when present. However, if one changes the set
of characteristics in general one arrives to a different function $\bar w\in \mathcal{L}^\infty$, which however identifies the same
distribution as $w$ 
 (see Theorem \ref{teoremaFinale1} $(2) \implies  (3)$).

\subsection{Setting}\label{Setting} We consider $\G = (\R^{m+n}, \cdot  , \delta_\lambda )$ a Carnot group of step $2$ where the group operation is given by \eqref{5.1.0} for $m\times m$ real matrices $\mathcal{B}^{(1)}, \dots , \mathcal{B}^{(n)}$ which satisfy the following property if $n>1$:
\begin{equation}\label{avereunaphisola}
b_{j1}^{(s)} b_{j1}^{(k)} =0, \quad \mbox{for all } s,k=1,\dots, n \mbox{ and } s\ne k.
\end{equation}
Here for each fixed $j=2,\dots ,m$ and $(x_j,\hat x_j)\in \R^{m-1}$ the vertical characteristic line $\gamma _{j}=(\gamma _{j1}, \dots , \gamma _{jn}):[-\delta , \delta ]\to \R^n$ satisfying
\begin{equation*}\label{carattere2}
\begin{aligned}
\dot \gamma _{js} (t) & = b_{j1}^{(s)} \phi (x_j+t,\hat x_j, \gamma _j(t))+ \frac{1}{2}\sum_{l=2}^{m}b^{(s)}_{jl}x_l , \quad \mbox{at most one } \bar s \in \{1,\dots , n \},\\
\dot \gamma _{js} (t) & =  \frac{1}{2}\sum_{l=2}^{m}b^{(s)}_{jl} x_l, \quad \mbox{for all } s=1,\dots, n  \mbox{ with } s\ne \bar s.  \\ \\
\end{aligned}
\end{equation*}
Consequently, we have that every vertical component is a strict line except one and this fact allows us to "immerse" in the context of Heisenberg groups where we just have a non linear component. Therefore, it is natural that the strategy will be to adapt the technique valid in $\HH^n$ proved in \cite{biblio17} to groups satisfy \eqref{avereunaphisola} .

We conclude this section given some examples of the subclass of Carnot groups studied.
\begin{exa}
Corank 1 Carnot groups, and so also Heisenberg groups, as in Example \ref{exaCorank1} belong to our setting because $n=1$.
\end{exa}

\begin{exa}
Free Carnot groups of step 2 as in Example \ref{exaFree Step 2 Groups} satisfy the condition \eqref{avereunaphisola}.
\end{exa}

\begin{exa}
The complexified Heisenberg group as in Example \ref{exacomplexified Heisenberg group} satisfies the condition \eqref{avereunaphisola}.
\end{exa}

 \subsection{The main theorem}  From now on, $\G $ will be as in the setting \ref{Setting} and we will take $x_j=0$ for simplicity. 

\begin{theo}\label{teoremaFinale1} 
Let $\G $ as in the setting $\ref{Setting}$, let $\V$, $\W$ be complementary subgroups defined in \eqref{5.2.0} and let $U\subset \R^{m+n-1}$ as in Proposition $\ref{lemma4.2bcsc}$. Let $\hat \phi $ be a continuous map  where $\phi :U\to \R$ is the map associated to $\hat \phi$ as in $\eqref{phipsi}$. If $\phi$ is locally $1/2 $-H\"older continuous along the vertical components, then the following statements are equivalent:
\begin{enumerate}
\item  $\hat \phi $ is a locally intrinsic Lipschitz function.
\item $\phi$ is a continuous distributional solution of $(D^{\phi}_2\phi,\dots, D^{\phi}_m\phi )  =w$ in $U $ with $w\in \mathcal{L}^\infty _{loc}(U ,\R^{m-1})$.
\item $\phi$ is a Lagrangian type  solution to the equation  $(D^{\phi}_2\phi,\dots, D^{\phi}_m\phi )  = \bar w$  in $U$ for a bounded function $\bar w :U \to \R^{m-1}$ such that $\bar w =w$  $\mathcal{L}^{m+n-1}$-a.e. on $U$.
\end{enumerate}

\end{theo}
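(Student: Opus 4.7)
The equivalence $(1)\Leftrightarrow(2)$ is exactly Theorem~\ref{lemma5.4bcsc}, so the substance of the theorem reduces to showing $(2)\Leftrightarrow(3)$. I will treat the two implications separately, exploiting Setting~\ref{Setting} to reduce the analysis of each characteristic equation to a single nonlinear scalar ODE coupled with $n-1$ affine equations, much as in the Heisenberg case.

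\medskip

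\noindent\emph{Direction $(3)\Rightarrow(2)$.} Fix $\zeta\in C^1_c(U)$ and $j\in\{2,\dots,m\}$. I would use the full Lagrangian parameterization $\Upsilon_j$ provided by Proposition~\ref{lemma4.2bcsc} to change variables from $(t,\hat x_j,y)\in\widetilde{\mathcal O}_j$ to $(t,\hat x_j,\chi_j(t,\hat x_j,y))\in U$. By \eqref{avereunaphisola}, at most one coordinate $\bar s$ of $\chi_j$ is genuinely coupled to $\phi$, while the other vertical coordinates are affine in $(t,\hat x_j,y)$, so the Jacobian determinant of $\Upsilon_j$ reduces to $\partial_{y_{\bar s}}\chi_{j\bar s}$, which is nonnegative by $(L.2)$. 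Writing the distributional test integral in the new coordinates, integrating by parts in the variable $t$ along the characteristics $\Upsilon_j$, and using $(LS1)$ to replace $\partial_t[\phi\circ\Upsilon_j]$ by $\bar w_{\chi_j}\circ\Upsilon_j$ gives
\[
\int_U \phi\left(X_j\zeta+\phi\sum_{s=1}^n b^{(s)}_{j1}Y_s\zeta\right)d\mathcal L^{m+n-1}=-\int_U \bar w_{\chi_j}\,\zeta\,d\mathcal L^{m+n-1}.
\]
Finally, $(LS3)$ replaces $\bar w_{\chi_j}$ with $w_j$ a.e., yielding $(2)$.

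\medskip

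\noindent\emph{Direction $(2)\Rightarrow(3)$.} Start from the full Lagrangian parameterization $\Upsilon_j$ on $U$ given by Proposition~\ref{lemma4.2bcsc}. The goal is to show that $t\mapsto \phi(\Upsilon_j(x_j+t,\hat x_j,y))$ is absolutely continuous, that its derivative agrees $\mathcal L^1$-a.e.\ with a bounded function $\bar w_{\chi_j}$, and that this function coincides with $w_j$ $\mathcal L^{m+n-1}$-a.e.\ on $U$. By \eqref{avereunaphisola} only $\chi_{j\bar s}$ depends nonlinearly on $\phi$, so $t\mapsto\Upsilon_j(t,\hat x_j,y)$ is Lipschitz with constant controlled by $\|\phi\|_\infty$ and $\B$. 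Combined with the local $1/2$-Hölder continuity of $\phi$ along the vertical variables, the composition $t\mapsto \phi\circ\Upsilon_j$ is at least continuous; I would then test the distributional equation $(2)$ against a mollified sequence of indicator-type functions supported in thin tubes around the characteristics and pass to the limit, obtaining the desired absolute continuity and an explicit bounded derivative $\bar w_{\chi_j}$. Reversing the change of variables used in the first direction identifies $\bar w_{\chi_j}$ with $w_j$ a.e., giving $(LS1)$ and $(LS3)$. Condition $(LS2)$ is obtained pointwise: if a directional derivative of $\phi$ along an integral curve $\Gamma_j$ of $D^\phi_j$ exists at a point, then smoothing the distributional equation around that point forces its value to equal $w_j$, hence $\bar w_{\chi_j}$.

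\medskip

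\noindent\emph{Main obstacle.} The delicate issue is to pass from the distributional equation plus vertical $1/2$-Hölder regularity to pointwise absolute continuity along Lebesgue-almost-every characteristic, and to pin down the derivative as a bounded $\bar w_{\chi_j}$. This is where \eqref{avereunaphisola} is essential: it decouples the transported system into one scalar Burgers-type ODE for $\chi_{j\bar s}$ plus $(n-1)$ affine components, allowing the one-dimensional argument of \cite{biblio17} to be applied slice-by-slice. Transferring the distributional identity onto a.e.\ characteristic requires a Fubini/disintegration argument in the variables $(\hat x_j,y)$; the monotonicity of $y_s\mapsto\chi_{js}(x,y_s,\hat y_s)$ from $(L.2)$, combined with the uniform Lipschitz-in-$t$ control on $\Upsilon_j$, is what ensures that the change of variables $(t,y)\mapsto\Upsilon_j(t,\hat x_j,y)$ neither sends null sets to positive-measure sets nor vice versa, so negligibility for the distributional equation translates into negligibility along characteristics.
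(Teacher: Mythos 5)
Your overall architecture matches the paper's: $(1)\Leftrightarrow(2)$ is delegated to Theorem \ref{lemma5.4bcsc}, and the real content is $(2)\Leftrightarrow(3)$, with the hypothesis \eqref{avereunaphisola} used to reduce each characteristic system to one scalar Burgers-type ODE plus affine components. For $(2)\Rightarrow(3)$ your sketch is workable in spirit, although the paper gets the Lipschitz continuity of $\phi$ along characteristics directly from Lemma \ref{lemma5.1bcsc} (rather than re-deriving it from thin-tube test functions) and then needs the Borel-selection machinery of Proposition \ref{Lemma6.2bcsc} (Lusin-type theorems from \cite{Librocheservefinale}) to turn $\partial_t^2\chi_{j\bar s}\circ\Upsilon_j^{-1}$, which is multivalued and only a.e.\ defined, into a genuine universally measurable $\bar w_{\chi_j}$ on $U$; your proposal glosses over this selection step, and your one-line argument for $(LS2)$ does not address that the condition quantifies over \emph{every} integral curve of $D^\phi_j$, not just those in the chosen parameterization.

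The genuine gap is in $(3)\Rightarrow(2)$. You propose a direct change of variables $(t,y)\mapsto\Upsilon_j(t,\hat x_j,y)$ with ``Jacobian'' $\partial_{y_{\bar s}}\chi_{j\bar s}$, justified by the monotonicity $(L.2)$. But $y_{\bar s}\mapsto\chi_{j\bar s}(t,\hat x_j,y_{\bar s},\hat y_{\bar s})$ is only continuous and non-decreasing (Proposition \ref{propcontinuita} gives continuity for a full parameterization, nothing more): it can be constant on intervals and can have a Cantor-type singular part, so it need not be absolutely continuous in $y_{\bar s}$, the pointwise derivative $\partial_{y_{\bar s}}\chi_{j\bar s}$ does not account for the full distributional derivative, and the map can send negligible sets to sets of positive measure and vice versa. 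Consequently the area/change-of-variables formula you invoke, and the subsequent integration by parts in $t$, are not justified; this is exactly the obstruction the paper flags (``a change of variable that is not single valued, not surjective\dots and which may map negligible sets into positive measure sets''). The paper's proof circumvents it by mollifying in the Lagrangian variable: it replaces $\chi_{j\bar s}(t,\cdot)$ by $\chi^\varepsilon_{j\bar s}(t,y_{\bar s})=(1+\varepsilon y_{\bar s})(\chi_{j\bar s}(t,\cdot)\ast\rho_\varepsilon)(y_{\bar s})$, which is smooth and \emph{strictly} increasing, hence invertible; it then defines $\phi^\varepsilon$ and $w^\varepsilon_j$ through this regularized parameterization, verifies $D_j^{\phi^\varepsilon}\phi^\varepsilon=w^\varepsilon_j$ exactly, passes to the limit $\varepsilon\to0$ in the distributional formulation, and finally identifies the limiting datum with $\bar w$ a.e.\ via the Rademacher-type Theorem \ref{Theorem 4.3.5fms} and Lemma \ref{lemma5.1bcsc}. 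Without some such regularization (or an argument excluding the singular part of the monotone parameterization), your direct computation does not go through.
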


\begin{rem}
We don't know if Theorem \ref{teoremaFinale1} $(2) \Leftrightarrow (3)$ is true in general Carnot group of step 2. We underline that  Lagrangian type  solution  implies a suitable monotonicity of the vertical components of  the maps $ \chi _{js}$ and we don't know if the following results as Proposition \ref{propcontinuita}, \ref{AppendiceA.1} and \ref{Lemma6.2bcsc} preserve these properties in general Carnot group of step 2.
\end{rem}

\begin{rem}
In Example 1.3 in \cite{biblio17}, the authors show that the datum $w$ in (2) in general is not equal to the datum $\bar w$ in (3); but, in general, we have that $\bar w = w$ almost everywhere on $U$.
\end{rem}

The proof of Theorem \ref{teoremaFinale1} requires on the following statements
\begin{lem}[Lemma 7.3, \cite{biblioDDD2}]\label{lemma5.1bcsc} 
Let $\G = (\R^{m+n}, \cdot  , \delta_\lambda )$ be a Carnot group of step $2$ and $\V$, $\W$ the complementary subgroups defined in \eqref{5.2.0}. Let $\hat \phi :\hat{\mathcal O} \to \V $ be a continuous map  where $\hat{\mathcal O} $ is open in $\W$ and $\phi :\mathcal O  \to \R$ is the map associated to $\hat \phi$ as in $\eqref{phipsi}$.  We also assume that 
\begin{enumerate}
\item $\phi$ is a continuous distributional solution of $(D^{\phi}_2\phi,\dots, D^{\phi}_m\phi )  =w$ in $\mathcal O $ with $w\in \mathcal{L}^\infty _{loc}(\mathcal O ,\R^{m-1})$
\item $\phi$ is locally $1/2 $-H\"older continuous along the vertical components 
\end{enumerate}
Then, for all $j=2,\dots , m$ there is $C=C(\| w_j\|_{\mathcal{L}^\infty (\mathcal O, \R )}, b_ {j1 }^{(s)})>0$ such that $\phi$ is a C-Lipschitz map along any characteristic line $\gamma _{j}=(\gamma _{j1}, \dots , \gamma _{jn}):[-\delta , \delta ]\to \R^n$ satisfying
\begin{equation*}\label{carattere}
\dot \gamma _{js} (t)= b_{j1}^{(s)} \phi (t,\hat x_j, \gamma _j(t))+ \frac{1}{2}\sum_{l=2}^{m}b^{(s)}_{jl} x_l, \quad \mbox{ for all } s=1,\dots, n,
\end{equation*}
with $t\in [-\delta , \delta ]$ and $\hat x_j \in  \R^{m-2}$ fixed. 
\end{lem}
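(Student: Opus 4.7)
The strategy is to exploit the Burgers-type conservation structure of the equation via vertical mollification, then integrate along the characteristic $\gamma_j$, using the $1/2$-H\"older hypothesis to handle the commutator error. Since $\phi\, Y_s \phi = \tfrac12 Y_s(\phi^2)$, the distributional hypothesis $(1)$ is equivalent to the conservation form
\[
X_j \phi + \tfrac12 \sum_{s=1}^n b^{(s)}_{j1}\, Y_s(\phi^2) \;=\; w_j \qquad \text{in the distributional sense on } \mathcal O.
\]

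Let $\eta^\epsilon$ be a standard mollifier on $\R^n$ and set $\phi^\epsilon := \phi *_y \eta^\epsilon$, mollifying only in the vertical variables. Since the restrictions of $X_j$ and $Y_s$ to $\W$ commute with $y$-convolution, the mollified equation $X_j \phi^\epsilon + \tfrac12 \sum_s b^{(s)}_{j1}\, Y_s(\phi^2)^\epsilon = w_j^\epsilon$ holds in the classical pointwise sense. Differentiating $\phi^\epsilon$ along a characteristic $\gamma_j$ satisfying the ODE in the statement and substituting this identity, one finds
\[
\frac{d}{dt}\bigl(\phi^\epsilon\circ \gamma_j\bigr) = w_j^\epsilon\circ\gamma_j + (\phi-\phi^\epsilon)\sum_{s} b^{(s)}_{j1}\, Y_s\phi^\epsilon\Big|_{\gamma_j} - \tfrac12 \sum_{s} b^{(s)}_{j1}\, Y_s\bigl[(\phi^2)^\epsilon - (\phi^\epsilon)^2\bigr]\Big|_{\gamma_j}.
\]
By hypothesis $(2)$, $|\phi-\phi^\epsilon|=O(\epsilon^{1/2})$ and the scalar quantity $(\phi^2)^\epsilon - (\phi^\epsilon)^2 = \int (\phi(\cdot-z)-\phi^\epsilon)^2\eta^\epsilon\,dz = O(\epsilon)$ locally uniformly on compact sets.

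Integrating from $t_1$ to $t_2$ along $\gamma_j$ and integrating by parts in $y_s$ against the flow (which combines the two vertical commutator terms into a single total $y_s$-derivative of an $O(\epsilon)$ quantity, up to negligible errors) one obtains the uniform bound
\[
\bigl|\phi^\epsilon(\gamma_j(t_2)) - \phi^\epsilon(\gamma_j(t_1))\bigr| \;\le\; C\bigl(\|w_j\|_{\mathcal{L}^\infty}, b^{(s)}_{j1}\bigr)\, |t_2-t_1| + o_\epsilon(1).
\]
Passing $\epsilon \to 0$ and using the local uniform convergence $\phi^\epsilon \to \phi$ (which follows from continuity of $\phi$ in the horizontal variables and $1/2$-H\"older continuity in the vertical ones) yields the claim.

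The main obstacle is the uniform control of the two vertical commutators: individually each is merely of order one, since the factor $\epsilon^{1/2}$ gained from H\"older regularity is consumed by the $\epsilon^{-1/2}$ lost when a $Y_s$-derivative falls on $\phi^\epsilon$. What makes the argument succeed is the DiPerna-Lions cancellation specific to the conservation identity $\phi Y_s\phi = \tfrac12 Y_s(\phi^2)$, which combines the two commutators into a single $Y_s$-derivative of an $O(\epsilon)$ term and makes its time integral along $\gamma_j$ an $o_\epsilon(1)$ quantity. This is precisely why $1/2$-H\"older regularity in the vertical components appears as the natural and sharp threshold hypothesis in the statement.
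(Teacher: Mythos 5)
Your strategy (vertical mollification plus the Burgers conservation structure) can be made to work, but the step you rely on to close the argument is unjustified and, as stated, false. Along a single characteristic there is no $y_s$-integration available, so ``integrating by parts in $y_s$ against the flow'' produces no cancellation: $\int_{t_1}^{t_2} Y_s\bigl[(\phi^2)^\epsilon-(\phi^\epsilon)^2\bigr](t,\gamma_j(t))\,dt$ is the time integral of a quantity that is merely $O(1)$ pointwise (the $O(\epsilon)$ gain is consumed by the $\epsilon^{-1}$ lost under $Y_s$), and nothing forces it to be $o_\epsilon(1)$; at a point where $\phi$ is exactly $1/2$-H\"older and no better the commutator genuinely does not vanish. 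The good news is that you do not need it to vanish. Since $\int\partial_{z_s}\eta^\epsilon\,dz=0$, one has the exact identity
\begin{equation*}
\Bigl(\phi\,Y_s\phi^\epsilon-\tfrac12\,Y_s\bigl((\phi^2)^\epsilon\bigr)\Bigr)(y)
=-\tfrac12\int\bigl[\phi(y-z)-\phi(y)\bigr]^2\,\partial_{z_s}\eta^\epsilon(z)\,dz ,
\end{equation*}
and the $1/2$-H\"older hypothesis gives $[\phi(y-z)-\phi(y)]^2\le C_H^2|z|\le C_H^2\epsilon$ on the support of $\eta^\epsilon$, which exactly compensates $\int|\partial_{z_s}\eta^\epsilon|\le C\epsilon^{-1}$: the commutator is bounded by a constant $K=K(C_H,b^{(s)}_{j1},\eta)$ uniformly in $\epsilon$. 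Its time integral is then at most $K|t_2-t_1|$, and the Lipschitz conclusion survives, with a constant that also depends on the local H\"older constant of $\phi$ (a dependence that is in fact unavoidable and implicit in the lemma). Two further technical points need repair: the mollified equation does not hold ``in the classical pointwise sense'' and the chain rule for $t\mapsto\phi^\epsilon(t,\gamma_j(t))$ is not immediate, because $\phi^\epsilon$ is smooth in $y$ but only continuous in $t=x_j$; these identities must be run in integral/distributional form or after an additional mollification in $t$.

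For comparison, the proof the paper imports from \cite{biblioDDD2} (Lemma 7.3), following the Heisenberg argument of \cite{biblio17}, avoids mollification entirely: it tests the distributional balance law on the thin region between the characteristic $\gamma_j$ and a small vertical translate of it, applies the divergence theorem, and observes that the net lateral flux equals one half of the squared vertical increment of $\phi$, which is $O(h)$ precisely by the $1/2$-H\"older hypothesis; dividing by $h$ and letting $h\to0$ gives the Lipschitz bound. Both routes use the exponent $1/2$ for the same reason --- it makes the squared increment linear in the vertical displacement --- but the flux argument isolates this as a boundary term rather than a commutator, which is why it produces the constant cleanly and with less regularization machinery.
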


About regularity of a full Lagrangian type  parametrization, we have the following result:
\begin{prop}\label{propcontinuita}
For all $j=2,\dots , m$, let $\Upsilon_j : \tl {\mathcal O _j} \to \mathcal O$ be a full Lagrangian type  parametrization  of a continuous map $\phi :  \mathcal O \to \R$. Then for a given $ ( \hat x_j, \hat y_{ s} )\in \R^{m-2} \times \R^{n-1},$ the map $ \chi _{js} = \chi _{js} (\cdot  , \hat x_j,\cdot , \hat y_{ s}): \tl {\mathcal O}_{j ( \hat x_j, \hat y_{ s} )} \to \R $  is continuous  for every $s=1,\dots , n$.
\end{prop}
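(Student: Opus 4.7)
The plan is to exploit the monotonicity of $(L.2)$, the absolute continuity from $(L.3)$, and the surjectivity coming from fullness, combining them via a monotone sandwich to pass from two separate continuities to joint continuity. The separate continuity in $y_s$ (with $t$ fixed) is the delicate part: by $(L.2)$, for each fixed $t$, the map $y_s \mapsto \chi_{js}(t,\hat x_j,y_s,\hat y_s)$ is non-decreasing on its domain (an open subset of $\R$), so it admits left and right limits everywhere and has only jump discontinuities. By the full Lagrangian type hypothesis, this same map is surjective onto $\{y_s : (t,\hat x_j,y_s,\hat y_s)\in \mcal O\}$; on each connected component of the domain — necessarily an open interval — surjectivity onto an interval forces the intermediate value property, which rules out jumps. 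Hence $\chi_{js}$ is continuous in $y_s$ for each fixed $t$.

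The separate continuity in $t$ (with $y$ fixed) is immediate from $(L.3)$: $t\mapsto \Upsilon_j(t,\hat x_j,y)$ is absolutely continuous on its interval of definition, and in particular the scalar component $\chi_{js}$ is continuous in $t$. For joint continuity at an arbitrary $(t_0,y_s^0)\in \tl{\mcal O}_{j(\hat x_j,\hat y_s)}$, consider a sequence $(t_n,y_s^n)\to (t_0,y_s^0)$. By openness of the domain, choose $\eta>0$ small enough that $(t_0,y_s^0\pm\eta)\in \tl{\mcal O}_{j(\hat x_j,\hat y_s)}$; then for $n$ sufficiently large, $(t_n,y_s^0\pm\eta)$ still lies in the domain and $|y_s^n-y_s^0|<\eta$. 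Applying monotonicity at the fixed time $t=t_n$ (suppressing $\hat x_j,\hat y_s$ in the notation),
\[
\chi_{js}(t_n,y_s^0-\eta)\le \chi_{js}(t_n,y_s^n)\le \chi_{js}(t_n,y_s^0+\eta).
\]
Letting $n\to\infty$ and using the continuity in $t$ at the two fixed levels $y_s^0\pm\eta$, and then letting $\eta\to 0^+$ and using the continuity in $y_s$ at $t=t_0$, squeezes $\chi_{js}(t_n,y_s^n)$ to $\chi_{js}(t_0,y_s^0)$, which yields joint continuity.

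The hard step is the $y_s$-continuity at fixed $t$, where monotonicity $(L.2)$ must interact with fullness to exclude jumps. It is worth emphasizing that the whole sandwich argument takes place along the single scalar component $\chi_{js}$: one never has to control the coupled sibling components $\chi_{js'}$ with $s'\ne s$ of the integral curve $\Upsilon_j$ — which is fortunate, since monotonicity of those in the variable $y_s$ is \emph{not} part of Definition \ref{defiA}. This keeps the proof essentially one-dimensional even when $n>1$.
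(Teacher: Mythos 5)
Your argument is correct, and its core --- ruling out jumps of $y_s\mapsto \chi_{js}(t,\hat x_j,y_s,\hat y_s)$ at fixed $t$ by playing the monotonicity in $(L.2)$ against the surjectivity required by fullness --- is precisely the paper's key step (there phrased as: if either one-sided limit differed from the value, surjectivity would fail). Where you genuinely diverge is in upgrading the two separate continuities to joint continuity. The paper traps $\chi_{j\bar s}$ on a small compact rectangle between the two continuous curves $t\mapsto\chi_{j\bar s}(t,\hat x_j,y_0\pm\delta_0,\hat y_{\bar s})$, concludes that $\Upsilon_j$ is bounded there, and then uses the ODE in $(L.3)$ to obtain a Lipschitz bound in $t$ that is \emph{uniform} in $y$, finishing with a triangle inequality. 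Your monotone sandwich $\chi_{js}(t_n,y_s^0-\eta)\le\chi_{js}(t_n,y_s^n)\le\chi_{js}(t_n,y_s^0+\eta)$ needs only pointwise $t$-continuity at the two frozen levels $y_s^0\pm\eta$, so it bypasses the uniform Lipschitz estimate (and the attendant boundedness of $D_j^\phi$ on the image of the rectangle) altogether; it also treats all indices $s$ uniformly, whereas the paper singles out the one index with $b^{(\bar s)}_{j1}\neq 0$. One small imprecision, shared with the paper's own write-up: fullness gives surjectivity of $\chi_{js}(t,\cdot)$ onto the open set $\{y_s:(t,\hat x_j,y_s,\hat y_s)\in\mathcal O\}$, which is a global statement onto a possibly disconnected target, not a component-by-component surjection onto an interval as you phrase it. The jump-exclusion nevertheless survives: a jump at $y_0$ leaves an open gap adjacent to the attained value $\chi_{js}(t,y_0)$, the openness of the target supplies points of that gap lying in the target, and global monotonicity forbids those points from ever being hit.
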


\begin{proof}

We just consider the case $b_ {j1 }^{(\bar s)}\ne 0$. Fix $j=2,\dots , m$ and a point $(t_0,\hat x_j,y_0,\hat y_{\bar s}) \in  \tl {\mathcal O _j}$. Let $\delta _0>0$ such that
\begin{equation*}
\mathcal{R}=[t_0-\delta _0, t_0+\delta _0]\times \{ \hat x_j \} \times [y_{0}-\delta _0, y_{0}+\delta _0]\times  \{ \hat y_{\bar s} \} \subset  \tl {\mathcal O _j}. 
\end{equation*}
 Because $\mathcal O _j$ is open set and $ [y_{0}-\delta _0, y_{0}+\delta _0] \subset \tl {\mathcal O}_{j (t_0, \hat x_j, \hat y_{\bar s} )} $, it follows that there is $\sigma_0=\sigma(t_0)>0$ such that
\begin{equation}
\left[  \chi  _{j\bar s} (t_0,\hat x_j,y_0,\hat y_{\bar s}) ,\chi  _{j\bar s} (t_0,\hat x_j,y_0,\hat y_{\bar s})+\sigma _0\right] \subset \mathcal O _{j (t_0, \hat x_j, \hat y_{\bar s})}.
\end{equation}
Moreover, by definition of full Lagrangian type  parametrization, $\chi_{j \bar s} (t_0,\hat x_j,\cdot ,\hat y_{\bar s}) $ is monotone and surjective; and so
\begin{equation}\label{3.6bcsc}
\exists \lim_{y\to y_0^{-}}  \chi  _{j\bar s} (t_0,\hat x_j,y,\hat y_{\bar s}) = \chi  _{j\bar s} (t_0,\hat x_j,y_0,\hat y_{\bar s}).
\end{equation}
Indeed, if this is not true, one of the inequalities below would be strict
\begin{equation*}
\lim_{y\to y_0^{-}}  \chi  _{j\bar s} (t_0,\hat x_j,y,\hat y_{\bar s}) \leq  \chi  _{j\bar s} (t_0,\hat x_j,y_0,\hat y_{\bar s})  \leq \lim_{y\to y_0^{+}}  \chi  _{j\bar s} (t_0,\hat x_j,y,\hat y_{\bar s}) 
\end{equation*}
and we get a contradiction given by the surjectivity. On the other hand, since the map $[t_0-\delta _0, t_0+\delta _0] \ni t \mapsto  \chi  _{j\bar s} (t,\hat x_j,y_0 \pm \delta _0,\hat y_{\bar s}) $
is continuous and using again the monotony of $\chi_{j \bar s}$, it follows that the map $\Upsilon_j : \mathcal{R} \to  \mathcal O$ is bounded. Then by $(L.3)$ of Definition \ref{defiA}, we get that $ \chi  _{j\bar s} (\cdot ,\hat x_j,y,\hat y_{\bar s}) :  [t_0-\delta _0, t_0+\delta _0] \to \R$  is Lipschitz continuous uniformly with respect to $ y\in [y_{0}-\delta _0, y_{0}+\delta _0]$ and, consequently, using also \eqref{3.6bcsc}, $ \chi  _{j\bar s}$ is continuous at point $(t_0,\hat x_j,y_0,\hat y_{\bar s}) ,$ as desired.

Notice that we do not prove anything about the global continuity of $\chi_{j \bar s}$.
\end{proof}

In the previous section, we give an explicit example of the partial Lagrangian type parameterization. Following \cite{biblio17}, and, consequently, using the specific form of our intrinsic derivatives we can extended the partial Lagrangian type parameterization to a full one.
\begin{prop}\label{AppendiceA.1}
Any partial Lagrangian type parameterization can be extended to a full one.
\end{prop}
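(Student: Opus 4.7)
The plan is to extend the given partial parameterization by inserting, for each vertical slice that is not yet surjectively covered, additional integral curves of $D^\phi_j$ passing through the missing points, and then to re-glue the entire family into a single Borel-measurable surjection using the order-parametrization trick of the proof of Proposition \ref{lemma4.2bcsc}.

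First, fix $j\in\{2,\dots,m\}$. By the Setting \ref{Setting}, there is at most one index $\bar s$ with $b^{(\bar s)}_{j1}\neq 0$; for $s\neq \bar s$ the component $\chi_{js}$ is affine in $t$ and uniquely determined by its initial value $y_s$ and by $(\hat x_j)$, so no genuine extension is needed in those coordinates. Thus the problem reduces to extending the single nonlinear map $\chi_{j\bar s}$ so that, for each fixed $(\hat x_j, \hat y_{\bar s})$, its image covers the whole slice $\{\tau\in\R : (x_j,\hat x_j,\tau,\hat y_{\bar s})\in \mathcal O\}$.

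Next, fix $(\hat x_j, \hat y_{\bar s})$ and the base time $t=x_j$. By property (L.2), the map $y_{\bar s}\mapsto \chi_{j\bar s}(x_j,\hat x_j, y_{\bar s},\hat y_{\bar s})$ is non-decreasing; its image $I$ is a (possibly disconnected) subset of the slice $\mathcal O_{(x_j,\hat x_j,\hat y_{\bar s})}$, and the complement consists of countably many at-most-intervals together with the two unbounded tails. For every $\tau^{\ast}$ in this complement, pick the \emph{minimal} integral curve $\gamma^{\tau^{\ast}}_j$ through $(x_j,\hat x_j,\tau^{\ast},\hat y_{\bar s})$ provided by Lemma \ref{lemma4.1bcsc}, and add it to the collection of characteristics already described by $\chi_j$. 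Call $\mathcal C$ the total family. By construction $\mathcal C$ is totally ordered by the pointwise order $\preceq$ of \eqref{relazioneordine}: the new curves cannot cross the old ones, because both satisfy the same ODE driven by the same continuous $\phi$ and two ordered integral curves cannot exchange order (the argument of Step 1 of Lemma \ref{lemma4.1bcsc}); among the newly added ones the same monotonicity argument gives the order.

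Now I would reparametrize $\mathcal C$ as in Steps 2--3 of Proposition \ref{lemma4.2bcsc}: take the uniform closure $\mathcal C^{\ast}$, which is compact, totally ordered and connected, and is still a family of integral curves of $D^\phi_j$; then use the continuous, strictly order-preserving map $\theta_j(\gamma)=\sum_{\ell\geq 0}2^{-\ell}\gamma(r_\ell)$ for an enumeration $(r_\ell)$ of $\mathbb Q$ in a suitable base interval, to obtain a homeomorphism $\theta_j^{-1}$ from an interval $J\subset\R$ onto $\mathcal C^{\ast}$. Setting $\chi^{\mathrm{full}}_{j\bar s}(t,\hat x_j,y_{\bar s},\hat y_{\bar s}):=\theta_j^{-1}(y_{\bar s})(t)$, and defining the other coordinates $\chi^{\mathrm{full}}_{js}$ by their affine formulae (as in \eqref{GammaovvioDIC} and \eqref{infunzgammaDIC}), produces a full Lagrangian type parameterization $\Upsilon_j^{\mathrm{full}}(t,\hat x_j,y)=(t,\hat x_j,\chi^{\mathrm{full}}_j(t,\hat x_j,y))$. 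Properties (L.1) and (L.3) are inherited because each element of $\mathcal C^{\ast}$ is an integral curve of $D^\phi_j$ valued in $\mathcal O$, (L.2) holds by the order-preserving nature of $\theta_j^{-1}$, and surjectivity on each slice is exactly the continuity plus image-endpoint argument of (c.1) in Proposition \ref{lemma4.2bcsc}. Joint Borel measurability in $(t,\hat x_j,y,\hat y_{\bar s})$ follows, as in Step 1 of Lemma \ref{lemma4.1bcsc}, from continuity in $t$ combined with monotonicity in $y_{\bar s}$.

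The step I expect to be delicate is showing that the new family $\mathcal C$ is really totally ordered after merging the pre-existing characteristics (which we did not choose) with the minimal-curve extensions through the gaps: one must check that a minimal curve selected to pass through a missing point does not cross an existing characteristic whose selection rule was unspecified. This is where one uses crucially that $\phi$ is continuous and that any two integral curves of $D^\phi_j$ starting ordered remain ordered, by the same semigroup/gluing argument used in Step 1 of Lemma \ref{lemma4.1bcsc}; the rest of the construction is then routine from the toolbox developed in Proposition \ref{lemma4.2bcsc}.
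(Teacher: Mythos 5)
Your overall strategy --- fill the gaps in the image of $\chi_{j\bar s}$ with extremal characteristics and then re-parametrize the resulting totally ordered family via $\theta_j$ --- is the same one the paper delegates to Appendix A.1 of \cite{biblio17}, and the reduction to the single nonlinear component $\bar s$ via \eqref{avereunaphisola} is fine. However, there is a genuine gap exactly at the step you yourself flag as delicate, and your resolution of it is wrong. The claim that two integral curves of $D^\phi_j$ which start ordered remain ordered is false for a vector field with merely continuous coefficients: by non-uniqueness two solutions may touch and then separate in the opposite order. The gluing argument of Step 1 of Lemma \ref{lemma4.1bcsc} does not prove that general statement; it only shows that the \emph{minimal} curve through $y_1$ stays below any integral curve through $y_2>y_1$, because the glued competitor is tested against the minimum defining the curve through the \emph{lower} point. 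It gives no control in the configuration you actually need, namely the global minimal curve through a gap value $\tau^{\ast}$ versus a pre-existing, non-extremal characteristic of the given partial parameterization that starts below $\tau^{\ast}$. A one-dimensional model: for $\dot y=3|y|^{2/3}$ the minimal forward solution through $(0,0)$ is $y\equiv 0$, while a perfectly admissible characteristic through $(0,-\delta)$ is $y(t)=-(\delta^{1/3}-t)^{3}$ for $t\le\delta^{1/3}$ and $y(t)=(t-\delta^{1/3})^{3}$ afterwards, which overtakes it. Hence the merged family $\mathcal C$ need not be totally ordered, $\theta_j$ need not be order-preserving on it, and property $(L.2)$ can fail for the resulting $\chi^{\mathrm{full}}_{j\bar s}$.

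The repair, which is what the cited Appendix A.1 of \cite{biblio17} carries out, is to localize the extremal selection. At a jump of $y_{\bar s}\mapsto\chi_{j\bar s}(\cdot,y_{\bar s},\cdot)$ the left and right limits $\chi_{j\bar s}(\cdot,y_{0}^{-})$ and $\chi_{j\bar s}(\cdot,y_{0}^{+})$ are themselves characteristics (monotone limits of equi-Lipschitz integral curves of a continuous field) and bound a strip $R$; the missing values must be reached by minimal integral curves \emph{constrained to $R$}, which exist because any integral curve entering the boundary of $R$ can be continued along the boundary characteristic and remains an integral curve. With all competitors confined to $R$, the gluing argument does yield a total order among the new curves and between new and old ones, and only then can the closure and the order-parametrization $\theta_j$ of Proposition \ref{lemma4.2bcsc} be applied. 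A secondary point: the surjectivity required of a full parameterization is onto $\{y_{s}:(x,y_{s},\hat y_{s})\in\mathcal O\}$ for \emph{every} $x$, hence for every time $t$, so filling the image only at the base time $t=x_j$ is not sufficient; the jumps must be filled at all times (e.g.\ at a dense set of times, then passing to the closed family).
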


\begin{proof}
The proof follows as in \cite{biblio17}, Appendix A.1  in the context of Heisenberg groups.
\end{proof}

The following proposition states one can reduce the PDE to ODEs along a selected family of characteristics constituting a Lagrangian type parameterization.

\begin{prop}\label{Lemma6.2bcsc}
Let $\phi : U\to \R$ be a continuous function where $U$ is given by Proposition $\ref{lemma4.2bcsc}$. For each $j=2,\dots , m$ and $\hat x_j \in [0,1]^{m-2}$ fixed, consider a Lagrangian type  parametrization $(\tl {\mathcal O _j},  \chi _{j1}(t,\hat x_j,y),\dots ,  \chi _{jn}(t,\hat x_j,y) )$ and assume that $[0,1] \ni t \mapsto \phi (t,\hat x_j, \chi _{j1}(t,\hat x_j,y),\dots ,  \chi _{jn}(t,\hat x_j,y))$ is Lipschitz continuous for all $y \in \tl {\mathcal O _j}_{(t,\hat x_j)} $. Then, for $s=1,\dots , n$ such that $ b^{(s)}_{j1} \ne 0$, there exists a Borel function $\bar w=(\bar w_2,\dots , \bar w_m) :U \to \R^{m-1}$ such that for all $y$
\begin{equation}\label{equationODE}
\begin{aligned}
 \frac{\partial \phi}{\partial t}(t,\hat x_j, \chi _{j1}(t,\hat x_j,y),\dots ,  \chi _{jn}(t,\hat x_j,y)) & = \frac{1}{ b^{(s)}_{j1} } \frac{\partial ^2 \chi _{js} }{\partial t^2}(t,\hat x_j,y) \\
 & = \bar w_j (t,\hat x_j, \chi _{j1}(t,\hat x_j,y),\dots ,  \chi _{jn}(t,\hat x_j,y)),
 \end{aligned}
\end{equation}
for $\mathcal{L}^1$-a.e. $t\in [0,1]$.
\end{prop}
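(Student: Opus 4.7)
The plan is to derive the left-hand identity in \eqref{equationODE} by differentiating the characteristic ODE, and then to build $\bar w_j$ by ``pushing forward'' the resulting second derivative through the parametrization, exploiting the structural condition \eqref{avereunaphisola} of Setting \ref{Setting}. This parallels and extends the argument used in the Heisenberg case of \cite{biblio17}.

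\textbf{First equality.} I fix $j \in \{2,\dots,m\}$ and let $\bar s$ denote the unique index (if any) for which $b_{j1}^{(\bar s)} \ne 0$, which is well defined thanks to \eqref{avereunaphisola}. Property $(L.3)$ of Definition \ref{defiA} asserts
\[
\dot\chi_{js}(t, \hat x_j, y) \;=\; b_{j1}^{(s)}\, \phi(t, \hat x_j, \chi_j(t, \hat x_j, y)) + \tfrac{1}{2}\sum_{l=2}^{m} b_{jl}^{(s)} x_l
\]
for a.e.\ $t$. By the Lipschitz hypothesis on $t \mapsto \phi(t, \hat x_j, \chi_j(t, \hat x_j, y))$, the right-hand side is itself Lipschitz in $t$ and hence differentiable a.e., yielding
\[
\ddot\chi_{js}(t, \hat x_j, y) \;=\; b_{j1}^{(s)}\, \frac{d}{dt}\phi(t, \hat x_j, \chi_j(t, \hat x_j, y)).
\]
For $s=\bar s$, dividing by $b_{j1}^{(\bar s)} \ne 0$ produces the left-hand equality of \eqref{equationODE}.

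\textbf{Construction of $\bar w_j$.} By \eqref{avereunaphisola}, for every $s \ne \bar s$ the component $\chi_{js}$ solves a linear ODE whose explicit affine solution
\[
\chi_{js}(t, \hat x_j, y) \;=\; y_s + \tfrac{t}{2}\sum_{l=2}^{m} b_{jl}^{(s)} x_l
\]
is globally invertible in $y_s$. Therefore, given a target $(t, \hat x_j, z) \in U$, the coordinates $y_s$ with $s \ne \bar s$ are uniquely recovered, and only $y_{\bar s}$ must be chosen so that $\chi_{j\bar s}(t, \hat x_j, y_{\bar s}, \hat y_{\bar s}) = z_{\bar s}$. By the monotonicity $(L.2)$ and the surjectivity provided by the full parametrization of Proposition \ref{lemma4.2bcsc}, the set of such $y_{\bar s}$ is a nonempty interval depending Borel-measurably on $(t, \hat x_j, z)$; I extract a Borel selection $\psi$ (for instance its left endpoint) and set
\[
\bar w_j(t, \hat x_j, z) \;:=\; \frac{1}{b_{j1}^{(\bar s)}}\, \ddot\chi_{j\bar s}\bigl(t, \hat x_j, \psi(t, \hat x_j, z), \hat y_{\bar s}(t, \hat x_j, z)\bigr)
\]
wherever the second derivative is defined, extending arbitrarily in a Borel fashion elsewhere.

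\textbf{Main obstacle: a.e.\ compatibility.} The delicate step is to show that for each fixed $y$, the identity $\bar w_j(t, \hat x_j, \chi_j(t, \hat x_j, y)) = \tfrac{1}{b_{j1}^{(\bar s)}} \ddot\chi_{j\bar s}(t, \hat x_j, y)$ holds for $\mathcal{L}^1$-a.e.\ $t$, even though the selection $\psi$ applied along the characteristic may return some $\tilde y_{\bar s}(t) \ne y_{\bar s}$. When two such monotone-ordered characteristics meet at time $t$, the ODE $(L.3)$ forces their first derivatives to coincide there; hence the nonnegative Lipschitz difference $g := \chi_{j\bar s}(\cdot, \tilde y_{\bar s}, \hat y_{\bar s}) - \chi_{j\bar s}(\cdot, y_{\bar s}, \hat y_{\bar s})$ vanishes together with its derivative at $t$. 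A density argument, using that $g$ is $C^{1,1}$ (because $\dot g$ is Lipschitz in $t$) and attains its minimum on $\{g = 0\}$, then yields $\ddot g = 0$ at $\mathcal{L}^1$-almost every $t$ in the coincidence set, so the two second derivatives agree there; together with the fact that isolated ``transversal crossings'' form an $\mathcal{L}^1$-null set of $t$, this delivers the required compatibility and completes the proof.
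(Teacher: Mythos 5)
Your overall strategy coincides with the paper's: the first identity in \eqref{equationODE} comes from differentiating the characteristic ODE $(L.3)$ in $t$ (legitimate, since the Lipschitz hypothesis makes $\dot\chi_{j\bar s}$ differentiable a.e.), the structural condition \eqref{avereunaphisola} reduces everything to the single non-affine component $\chi_{j\bar s}$ because the other components are explicit affine bijections in $y_s$, and $\bar w_j$ is obtained by a Borel selection of a preimage under $\Upsilon_j$. The first place where your write-up has a gap is the selection itself: your left-endpoint choice $\psi$ is not guaranteed to land in the set $\mathcal A$ where $\partial_t^2\chi_{j\bar s}$ exists, so the clause ``wherever the second derivative is defined, extending arbitrarily elsewhere'' leaves $\bar w_j$ uncontrolled on a set that you have not shown is met only for an $\mathcal L^1$-null set of times along each fixed characteristic. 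The paper avoids this by first proving that $\mathcal A$ is an $F_{\sigma\delta}$ set of full measure and that the fibers of $\Upsilon_j$ are compact intervals, and then applying a von Neumann--Lusin measurable selection theorem to $\Upsilon_j$ \emph{restricted to} $\mathcal A$, so that the selected preimage is always a point of twice differentiability.

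The second and more serious gap is in your ``main obstacle'' paragraph. There you compare $\chi_{j\bar s}(\cdot,y_{\bar s})$ with a single \emph{fixed} competitor $\tilde y_{\bar s}$, but the selection returns a time-dependent competitor $\tilde y_{\bar s}(t)$ ranging over an uncountable family. For each fixed pair, your argument ($g\ge 0$ of class $C^{1,1}$, $g=\dot g=0$ on the contact set, hence $\ddot g=0$ at a.e.\ density point, plus countably many strict touchings) is correct and gives an $\mathcal L^1$-null exceptional set of $t$; but one cannot simply take the union of these null sets over uncountably many competitors. What rescues the argument --- and is precisely the content of Step 5 of Lemma 6.2 in \cite{biblio17}, to which the paper defers --- is an ordering/separation device: if at two times $t<t'$ the touchings with $\chi(\cdot,\sigma(t))$ and with $\chi(\cdot,\sigma(t'))$ are both strict (so that each gap function is strictly positive in a punctured neighbourhood), then the monotonicity $(L.2)$ forces simultaneously $\sigma(t)<\sigma(t')$ and $\sigma(t')<\sigma(t)$ whenever $|t-t'|$ is small, a contradiction; hence the bad set is locally uniformly separated, therefore countable, therefore null. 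Without this (or an equivalent argument) your closing sentence that ``isolated transversal crossings form an $\mathcal L^1$-null set of $t$'' is an assertion rather than a proof.
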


\begin{proof}
We use the similar technique exploited in \cite[Lemma 6.2]{biblio17}  in the context of Heisenberg groups.


Fix $j=2,\dots , m$, $(\hat x_j, \hat y_{ s}) \in \R^{m-2}  \times \R^{n-1}$ such that the components belong to $U$. With a bit abuse to notation we use $U$ and $\mathcal O _j$ for the same thing.

 In order to check $\Upsilon _j :\tl {\mathcal O _j} \to U$ lifts $ \frac{1}{ b^{(s)}_{j1} } \frac{\partial ^2 \chi _{js} }{\partial t^2}$ to a map $\bar w_j$ a.e. defined on $\mathcal O _j$, which would provide our thesis, we split the proof in several steps.

$\mathbf{Step \, 1.}$  Using Tonelli theorem, we prove that the subset $\mathcal A \subset \tl {\mathcal O _j}$ of those $(t,\hat x_j, y, \hat y_{ s})$ where $ \chi _{j\bar s}$ is twice $t$-differentiable is a full measure, i.e. 
\begin{equation}\label{equinsieme}
\mathcal A = \bigcap_{\varepsilon \to 0 } \bigcup_{r\in \QQ}  \bigcup_{\delta >0} \left\{ (t,y)\,:\, \frac{\phi (\overline{\mathcal U ((t,y), \delta)}) - \phi (t,y)}{\delta }  \subset\overline{ \mathcal U(r,\varepsilon )} \right\} .
\end{equation}

 Indeed each $y-$section has full measure by the Lipschitz continuity of $$b^{(s)}_{j1}  \phi (t,\hat x_j, \chi _{j1}(t,\hat x_j,y),\dots ,  \chi _{jn}(t,\hat x_j,y))+ \frac{1}{2}\sum_{l=2}^{m}b^{(s)}_{jl}x_l.$$ Moreover, using the continuity of $\phi$, we have that $\mathcal A$ is $F_{\sigma \delta}$ set, i.e. it is a countable union of closed sets equal to \eqref{equinsieme}, as desired.

$\mathbf{Step \, 2.}$  It is relevant to notice that $ \frac{1}{ b^{(s)}_{j1} } \frac{\partial ^2 \chi _{j s} }{\partial t^2} (t,\hat x_j, y, \hat y_{ s})$ is a Borel function on its domain $\tl {\mathcal O _j}$.

Indeed, this follows from the simply fact that $$\frac{\partial  \chi _{j s} }{\partial t} (t,\hat x_j, y) = b^{(s)}_{j1}  \phi (t,\hat x_j, \chi _{j1}(t,\hat x_j,y),\dots ,  \chi _{jn}(t,\hat x_j,y))+ \frac{1}{2}\sum_{l=2}^{m}b^{(s)}_{jl}x_l$$ is Lipschitz continuous in $t$. 

$\mathbf{Step \, 3.}$ Now we analyze $\Upsilon_j (\mathcal A)$ and partial inverse of it. 

We partition $\tl {\mathcal O _j}$ into the level sets of $\Upsilon_j $, which are $G_\delta$ sets (i.e. level sets of $\Upsilon_j $ are countable intersection of open sets). 
Indeed, recalling that a closed subset of a metrizable space is a $G_\delta $ set (see  \cite{Librocheservefinale}, Exercise 2.1.16), it is easy to see that, if $(t_0,\hat x_j, y_0, \hat y_{ s}) \in \Upsilon_j (\tl {\mathcal O _j})$, then 
\begin{equation*}
\Upsilon_j^{-1} (t_0,\hat x_j, y_0, \hat y_{ s})=\{(t_0,\hat x_j)\} \times  I_0 \times \{\hat y_{ s} \},
\end{equation*}
where $I_0 \subset \R$ is a closed bounded interval. 

Moreover, in order to prove the Borel measurability of the partition $\{ \Upsilon_j^{-1} (t_0,\hat x_j, y_0, \hat y_{ s}) \,:\, (t_0,\hat x_j, y_0, \hat y_{ s})\in \Upsilon_j (\tl {\mathcal O _j}) \}$, it is sufficient to show that $\Upsilon_j^{-1} (\Upsilon_j (\mathcal U))$ is a Borel set for each open set $\mathcal U \subset \R^{m+n}$ (see Section 5.1 in \cite{Librocheservefinale}). 

For simplicity, thanks to Proposition \ref{AppendiceA.1}, we can consider the case when $\chi _{j s}$ is already a full parameterization. Here, by Proposition \ref{propcontinuita}, $\chi _{j s}$ is continuous.

Because every open set $\mathcal U$ is $\sigma$-compact we know that, by continuity, $\Upsilon_j (\mathcal U)$ is $\sigma$-compact, and so $\Upsilon_j^{-1} (\Upsilon_j (\mathcal U))$  is also $\sigma$-compact.
Therefore 
by \cite[Theorem 5.9.2]{Librocheservefinale}, we have that there is a Borel restriction which is one-to-one with image $ \Upsilon_j (\mathcal A)$, i.e., there is a Borel set $\mathcal S$ and a Borel injective map $\Lambda _j$
\begin{equation*}
\Lambda _j^{-1} : \mathcal S \subset \tl {\mathcal O _j} \to  \Upsilon_j (\mathcal A) \subset \mathcal O _j,
\end{equation*}
such that
\begin{enumerate}
\item it holds
\begin{equation*}
 \Lambda ^{-1}_j  (t,\hat x_j, y, \hat y_{ s})=  \Upsilon_j(t,\hat x_j, y, \hat y_{ s}), \,\, \mbox{ for all } (t,\hat x_j, y, \hat y_{ s}) \in  \mathcal S.
  \end{equation*}
\item $\mbox{Im}(\Lambda ^{-1}_j ) = \Upsilon_j (\mathcal A).$
\end{enumerate}


Hence, Theorem 4.12.4 in \cite{Librocheservefinale}, due to Lusin, says that $\Upsilon_j (\mathcal A)$ is Borel  and that this restriction $\Lambda ^{-1}_j $ has a Borel inverse
\begin{equation*}
\Lambda _j :  \Upsilon_j (\tl {\mathcal O _j} ) \subset \mathcal O _j \to   \tl {\mathcal O _j}. 
\end{equation*}

$\mathbf{Step \, 4.}$ Finally, we can define $\bar w_j$ as
$$
\bar w_j  (t,\hat x_j, y, \hat y_{ s}) =\left\{
\begin{array}{l}
 \frac{1}{ b^{(s)}_{j1} } \frac{\partial ^2 \chi _{j s} }{\partial t^2} ( \Lambda _j(t,\hat x_j, y, \hat y_{ s})) , \qquad  \,\,\, \mbox{ if }  (t,\hat x_j, y, \hat y_{ s}) \in  \Upsilon_j (\mathcal A),\\
\\
0, \qquad \qquad \qquad \qquad \qquad  \qquad \,\,\mbox{ if }  (t,\hat x_j, y, \hat y_{ s})\in \mathcal O _j  -  \Upsilon_j (\mathcal A).\\
\end{array}
\right.\\
$$

Arguing as in Step 5 of Lemma 6.2 in  \cite{biblio17},  
we have that $ \frac{1}{ b^{(s)}_{j1} } \frac{\partial ^2 \chi _{js} }{\partial t^2} \circ \Upsilon_j ^{-1}$ is multivalued at most countable set and consequently $\bar w_j$ is well defined. The proof of this proposition is complete.
\end{proof}

Now we are able to show the proof of Theorem \ref{teoremaFinale1}.
\begin{proof}[Proof of Theorem $\ref{teoremaFinale1}$]
$\mathbf{(1)  \iff (2)}$ follows from Theorem $\ref{lemma5.4bcsc}.$

\medskip

$\mathbf{(2)  \implies  (3)}$

For each $j=2,\dots ,m$ and $\hat x _j \in (0,1)^{m-2}$ fixed, by Lemma  \ref{lemma4.2bcsc}  there exists a full Lagrangian  type parameterization associated to $\phi :U\to \R$.
Moreover from Lemma \ref{lemma5.1bcsc}, it follows that $\phi$ is Lipschitz continuous along characteristics and consequently by Proposition \ref{Lemma6.2bcsc} properties $(LS1)$ and $(LS2)$ of Definition \ref{defiLagrangiana} immediately hold.

Finally, in order to prove  $(LS3)$ of Definition \ref{defiLagrangiana}, we show the identification of $w$ and $\bar w$. By Corollary \ref{coro5.7bcsc}
\begin{equation}\label{6.1}
d\phi_{i(a)} (e_j)= \left(  D_j^{\phi} \phi (a) ,0,\dots ,0\right)=(w_j (a),0,\dots , 0), \quad \mbox{a.e.} \,\, a\in U,
\end{equation}
 where $e_j=(0,\dots , 0,1,0,\dots , 0)$ and $j$-th element is $1$. 

On the other hand, if
\begin{equation}\label{equationODECONBARW}
\begin{aligned}
 \frac{\partial \phi}{\partial t}(t,\hat x_j, \chi _{j1}(t,\hat x_j,y),\dots ,  \chi _{jn}(t,\hat x_j,y)) &  = \bar w _j(t,\hat x_j, \chi _{j1}(t,\hat x_j,y),\dots ,  \chi _{jn}(t,\hat x_j,y)),
 \end{aligned}
\end{equation}
for a.e. $t\in [0,1]$, for all $y \in \tl {\mathcal O _j}_{(t,\hat x_j)}$ and $\chi _{js}$ are onto, using again Lemma \ref{lemma5.1bcsc}, we have
\begin{equation}\label{6.2}
d\phi_{i(a)} (e_j)= \left(  D_j^{\phi} \phi (a) ,0,\dots ,0\right)=( \bar w_j (a),0,\dots , 0), \quad \mbox{a.e.} \,\, a\in U ,
\end{equation}
As a consequence, from \eqref{6.1} and \eqref{6.2}, $(LS3)$ of Definition \ref{defiLagrangiana} is true and the proof of $\mathbf{(2)  \implies  (3)}$ is complete.

\medskip

$\mathbf{(3)  \implies  (2)}.$ 

We refine the technique used in \cite[Theorem 6.10]{biblio17} in the context of Heisenberg groups. The main differences w.r.t. Heisenberg case are  Step 3 and 4. Here we use  on a mollification procedure in the Lagrangian variables; we begin regularizing the map $\chi _{j\bar s}$ and then using it we prove the smoothing of $ \Upsilon_j  $ and so of $\phi$  in the $t$-variable. Finally, using again the Friedrichs' mollifier, we regularize the datum $w.$

 By assumption, $ \chi _{js}$ is smooth for every $s=1,\dots , n$ except for at most one $\bar s.$ Hence, we just consider the case $b_ {j1 }^{(\bar s)}\ne 0$ and, for simplicity, we write $s$ instead of $\bar s.$

Fix $j=2,\dots , m$, $\hat x_j\in \R^{m-2}$ and $\hat y_{ s} \in \R^{n-1}$. Let $\Upsilon_j : \tl {\mathcal O _j} \to \mathcal O$ be a full Lagrangian  type parametrization of  $\phi :  \mathcal O \to \R$. This is possible thanks to Section \ref{Existence Lagrangian type parameterization} and Proposition \ref{AppendiceA.1}.

As this is a local argument, one can assume in order to avoid technicalities that $ \chi _{js}$ is constant out of a compact: just modify $ \chi _{js}$, and consequently $\phi$ and $\bar w$, out of an open ball where one wants to prove the statement. By a partition of unity the statement will hold then on the desired domain. By a change of variables one can as well assume that $ \chi _{js}$ is valued in $(0,1)$ and that the support of $\phi$ is compactly contained in $(0,1)^2$. In particular w.l.o.g. we can assume that $\Upsilon_j : \tl {\mathcal O _j}:=(0,1)\times (\tl t_1, \tl t_2) \to \mathcal O :=(0,1)\times (t_1, t_2)$  with $-\infty < \tl t_1<\tl t_2 < +\infty $, $0 < t_1 < t_2 < 1$ and that $ \chi _{js}$ can be meant as a continuous function $(0,1)\times \R \to (0,1)$  by defining it $ \chi _{js}(t,\hat x_j,y)=t_1$  for each $t\in (0,1)$ and $y_s\leq \tl t_1$, and $ \chi _{js}(t,\hat x_j,y)=t_2$  for each $t\in (0,1)$ and $y_s\geq \tl t_2$.


We split the proof in several steps:
\begin{enumerate}
\item Smoothing of $\chi _{js (\hat x_j, \hat y_s)};$
\item Existence  of the inverse map of  $ \Upsilon_j ;$
\item Smoothing of $\phi$ in the $t$-variable;
\item Approximation of the datum $w$;
\item Limiting argument.
\end{enumerate}

$\mathbf{Step \, 1.}$ As we said, we use Friedrichs' mollifier to regularize $ \chi _{js}$ in $y_s$-variable.

Consider a suitable convolution kernel $\rho _\varepsilon (y_s)$ compactly supported in $\{|y_s| < \varepsilon\}$ and define the $y_s$-regularized function 
\[
\chi _{js (\hat x_j, \hat y_s)}^\varepsilon =  \chi _{js}^\varepsilon : (0,1)\times \R \to \R,
\]
 given by
 \begin{equation*}
\chi _{js}^\varepsilon (t,y_s)=(1+\varepsilon y_s) \left( \chi _{js}(t,\cdot ) \ast  \rho _\varepsilon \right) (y_s)=(1+\varepsilon y_s) \int _\R \chi _{js}(t,\xi )  \rho _\varepsilon (y_s-\xi) \, d\xi .\\
\end{equation*}
Arguing as in Step 1 of \cite[Theorem 6.10]{biblio17}, it is possible to show that
\begin{itemize}
\item The function $\chi _{js}^\varepsilon$ is locally Lipschitz continuous;
\item for every $t \in (0,1), \chi _{js}^\varepsilon  (t, \cdot ):\R \to \R$ is strictly increasing and smooth 
\item for $\varepsilon $ small enough and each $(t,y_s) \in (0,1)\times \R ,$ it holds
\begin{equation*}
  \frac{\partial \chi _{js}^\varepsilon }{\partial y_s}  (t,y_s) = \varepsilon ( \chi _{js}(t,\cdot ) \ast \rho _\varepsilon ) (y_s) +(1+\varepsilon y_s) \left(  \frac{\partial \chi _{js}^\varepsilon }{\partial y_s}  (t,\cdot ) \ast \rho_\varepsilon  \right) (y_s)>0,
\end{equation*}
where $\frac{\partial \chi _{js}^\varepsilon }{\partial y_s}  (t,\cdot )  \ast \rho_\varepsilon $ denotes the convolution between the finite non-negative measure $\frac{\partial \chi _{js}^\varepsilon }{\partial y_s}  (t,\cdot )$ and the kernel $ \rho_\varepsilon $ 
\item $\chi _{js}^\varepsilon  $ converges locally uniformly to $\chi _{js}$ for $\varepsilon  \to 0.$ 
\end{itemize}

$\mathbf{Step \, 2.}$ Let $\Upsilon_{j,\hat x _j,\hat y_s, \varepsilon}=  \Upsilon_{j, \varepsilon} :(0,1) \times \R \to (0,1) \times \R^n $ defined as
\begin{equation*}
 \Upsilon_{j, \varepsilon} (t,y_s):= (t,\hat x_j, \chi _{j1} (t,y) ,\dots, \chi _{js}^\varepsilon (t,y_s), \dots,  \chi _{jn} (t,y)).
\end{equation*}
It is trivial the fact that $ \Upsilon_{j, \varepsilon}$ is a smooth function, if we do not consider the factor  $ \chi _{js}^\varepsilon .$ We put $ \chi _{j\ell}^{-1} (t,y)$ the inverse map of $\chi _{j\ell} (t,y) $ equal to \eqref{Gammaovvio} for $\ell =1,\dots, n$ with $\ell \ne s.$ Hence, the only problem is  the factor $ \chi _{js}^\varepsilon .$ By Proposition \ref{propcontinuita}, the map \begin{equation*}
F_{j, \varepsilon} (t,y_s):= (t, \chi _{js}^\varepsilon (t,y_s)),
\end{equation*}
 is continuous map. Moreover, by definition and Step 1, it is 1-to-1 and onto and, consequently, it admits an inverse map $F_{j, \varepsilon} ^{-1}:(0,1) \times \R \to (0,1)\times \R $ which is continuous, injective and onto between two open sets of $\R^2.$ We call $ \Upsilon_{j, \varepsilon} ^{-1}$ the map given by identity in vertical components and, for the last vertical components, the inverse component of each $ \chi _{j\ell}$  for $\ell =1,\dots, n$ with $\ell \ne s$ and $F_{j, \varepsilon} ^{-1}$ for $\ell =s$.

$\mathbf{Step \, 3.}$ We can define the approximation $ \phi^\varepsilon_{(\hat x_j, \hat y_s)} = \phi^\varepsilon :(0,1)\times \R \to \R$ as the map satisfying the following equality
\begin{equation*}
b^{(s)}_{j1}  \phi ^\varepsilon ( t,y_s) = \psi ^\varepsilon ( \Upsilon_{j, \varepsilon} ^{-1} (t,y_s)) -  \frac{1}{2}\sum_{l=2}^{m}b^{(s)}_{jl}x_l,  
\end{equation*}
where $\psi^\varepsilon :(0,1)\times \R \to \R $ is given by $$\psi^\varepsilon (t,y_s):= (1+\varepsilon y_s) \left( \left( b^{(s)}_{j1}  \phi (t,\hat x_j,  \chi _{j1}(t,\hat x_j,y), \dots, \cdot, \dots, \chi _{jn}(t,\hat x_j,y) )  \right) \ast  \rho _\varepsilon \right) (y_s),$$ where $\cdot$ is in s-position. The key observation is that, recall that $$\frac{\partial  \chi _{j s} }{\partial t} (t, \hat x_j,y) = b^{(s)}_{j1}  \phi (t, \chi _{j1}(t,\hat x_j,y), \dots, \chi _{jn}(t,\hat x_j,y))+ \frac{1}{2}\sum_{l=2}^{m}b^{(s)}_{jl}x_l,$$  we have that
\begin{equation}\label{equationservira}
\begin{aligned}
b^{(s)}_{j1}  \phi ^\varepsilon ( t, \chi _{js}^\varepsilon (t,y_s))  & =  \psi^\varepsilon (t,y_s) -  \frac{1}{2}\sum_{l=2}^{m}b^{(s)}_{jl}x_l\\
& = (1+\varepsilon y_s) \left(  \frac{\partial \chi _{js}}{\partial t}  (t, \hat x_j , \hat y_s, \cdot ) \ast  \rho _\varepsilon \right) (y_s) -  \frac{1}{2}\sum_{l=2}^{m}b^{(s)}_{jl}x_l\\
& =   \frac{\partial \chi _{js}^\varepsilon }{\partial t}  (t,\hat x _j ,y) -  \frac{1}{2}\sum_{l=2}^{m}b^{(s)}_{jl}x_l.
\end{aligned}
\end{equation}

Arguing as in the last two claim in Step 1 of \cite[Theorem 6.10]{biblio17}, $\phi ^\varepsilon$ is a locally Lipschitz smooth function which converges in $L^1$ to $\phi$ as $\varepsilon \to 0 .$ Here the only difference is that $\phi ^\varepsilon$ and $\phi $ are compactly supported in $(0,1)\times (0, B)$ for some $B>0$ instead of $(0,1)^2.$

$\mathbf{Step \, 4.}$ We define an approximate datum $w^\varepsilon _j: \mathcal O \to \R$ using the  approximation $\phi ^\varepsilon$ in the previous step:

\begin{equation}\label{equationODE.22}
\begin{aligned}
\frac{\partial \phi^\varepsilon }{\partial t}(t,x,y)+  b^{(s)}_{j1}  \frac{\partial }{\partial y_s} \frac{( \phi^\varepsilon )^2}{2}(t,x,y)+  \frac{1}{2} \sum_{i=2}^{m}  \frac{\partial  \phi^\varepsilon }{\partial y_i} (t,x,y) \sum_{l=2}^{m}b^{(i)}_{jl}x_l = w^\varepsilon_j (t,x,y), \\
 \end{aligned}
\end{equation}
almost everywhere in $\mathcal O,$ i.e., $ D^{\phi^{\varepsilon }}_j\phi ^{\varepsilon }=  w^\varepsilon_j.$  We can assume that $w^\varepsilon $ is a Borel map, because $\phi ^\varepsilon $ is a Lipschitz map. Moreover, using the smoothness of $\phi^\varepsilon $ and \eqref{equationservira}, the equality  \eqref{equationODE.22} is equivalent to
\begin{equation*}\label{equationODE.8}
\begin{aligned}
 w^\varepsilon_j (t,\hat x_j, \chi _{j1}(t,\hat x_j,y),\dots ,  \chi _{jn}(t,\hat x_j,y)) & = \frac{1}{ b^{(s)}_{j1} } \frac{\partial ^2 \chi _{js}^\varepsilon }{\partial t^2}(t,\hat x_j,y) \\
& = \frac{\partial \phi ^\varepsilon }{\partial t}(t,\hat x_j, \chi _{j1}(t,\hat x_j,y) ,\dots, \chi _{js}^\varepsilon (t,y_s), \dots,   \chi _{jn}(t,\hat x_j,y)). \\
 \end{aligned}
\end{equation*}
By Proposition \ref{Lemma6.2bcsc}, we have that \eqref{equationODE} holds with the datum $\bar w$ and so
\begin{equation*}\label{equationODE.408}
\begin{aligned}
 w^\varepsilon_j & (t,\hat x_j, \chi _{j1}(t,\hat x_j,y) ,\dots, \chi _{js}^\varepsilon (t,y_s), \dots,   \chi _{jn}(t,\hat x_j,y)) = \frac{1}{ b^{(s)}_{j1} } \frac{\partial ^2 \chi _{js}^\varepsilon }{\partial t^2}(t,\hat x_j,y) \\
& = (1+\varepsilon y_s) \left(  \frac{\partial ^2 \chi _{js}}{\partial t^2}  (t, \cdot ) \ast  \rho _\varepsilon \right) (y_s)\\
& =  (1+\varepsilon y_s) \left(  \bar w_j (t,\hat x_j, \chi _{j1}(t,\hat x_j,y)  ,\dots, \chi _{js}^\varepsilon (t,y_s), \dots,  \chi _{jn}(t,\hat x_j,y)) \ast  \rho _\varepsilon \right) (y_s).\\
 \end{aligned}
\end{equation*}
In particular, the datum $ w^\varepsilon_j$ are uniformly bounded by $(1 + \varepsilon)$ times the uniform bound for $\bar w$. Moreover, for each $t$ fixed $ w^\varepsilon_j (t,\hat x_j, \chi _{j1}(t,\hat x_j,y)  ,\dots, \chi _{js}^\varepsilon (t,y_s), \dots,   \chi _{jn}(t,\hat x_j,y)) $ converges in all $L^p(dy_s)$ to $\bar w_j (t,\hat x_j, \chi _{j1}(t,\hat x_j,y) ,\dots, \chi _{js}(t,y_s), \dots,   \chi _{jn}(t,\hat x_j,y)) $, and thus in $L^p(dtdy_s)$; Notice that if the datum $\bar w$ is continuous, then the convergence is clearly uniform.

$\mathbf{Step \, 5.}$  Arguing as in  Step 4 of \cite[Theorem 6.10]{biblio17}, we have that there is a bounded map $\tilde w$ such that $\phi $ is a distributional solution of the equation
\begin{equation*}
\begin{aligned}
\frac{\partial \phi }{\partial t}(t,x,y)+  b^{(s)}_{j1}  \frac{\partial }{\partial y_s} \frac{\phi ^2}{2}(t,x,y)+  \frac{1}{2} \sum_{i=2}^{m}  \frac{\partial  \phi }{\partial y_i} (t,x,y) \sum_{l=2}^{m}b^{(i)}_{jl}x_l = \tilde w_j (t,x,y),\\
 \end{aligned}
\end{equation*} on $U.$ 
Finally, we prove that $\tilde w = \bar w$ a.e.  on $U.$
Indeed, by Theorem \ref{Theorem 4.3.5fms}, the function $\tilde w$ is equal to the intrinsic gradient of $\phi$ almost everywhere. On the other hand, using the definition of intrinsic gradient of $\phi$ and Lemma \ref{lemma5.1bcsc}, the intrinsic gradient of $\phi$ coincides to the datum $\bar w$ almost everywhere.

This completes the proof of $\mathbf{(3)  \implies  (2)}$ and, consequently, the theorem is true.
\end{proof}

\bibliographystyle{plain}
\bibliography{BCSCBib}

\end{document}